\numberwithin{equation}{section}
\newcommand{\ds}{\displaystyle}
\def\nm{\noalign{\medskip}}
\newtheorem{thm}{Theorem}[section]
\newtheorem{rmk}{Remark}[section]
\newtheorem{cor}{Corollary}[section]
\newtheorem{definition}{Definition}
\newtheorem{lem}{Lemma}[section]
\newtheorem{prop}{Propsition}[section]
\newtheorem{cond}{Condition}
 \def\p{\partial}
\def \Vh0{\stackrel{\circ}{V}_h} 
\def\Om{\Omega}  \def\om{\omega}
\def\l{\label}  \def\f{\frac} \def\df{\dfrac} 
   \def\eps{\varepsilon}
\def\e{\eta}
\def\l|{\left|}
\def\r|{\right|}
\DeclareMathOperator*{\argmax}{arg\,max}
\newcommand{\R}{\mathbb{R}}
\newcommand{\N}{\mathbb{N}}
\newcommand{\lc}
{\mathrel{\raise2pt\hbox{${\mathop<\limits_{\raise1pt\hbox
{\mbox{$\sim$}}}}$}}}
\newcommand{\gc}
{\mathrel{\raise2pt\hbox{${\mathop>\limits_{\raise1pt\hbox{\mbox{$\sim$}}}}$}}}
\newcommand{\ec}
{\mathrel{\raise2pt\hbox{${\mathop=\limits_{\raise1pt\hbox{\mbox{$\sim$}}}}$}}}
\def\be{\begin{equation}} \def\ee{\end{equation}}
\def\bea{\begin{eqnarray}}  \def\eea{\end{eqnarray}}
\def\beas{\begin{eqnarray*}} \def\eeas{\end{eqnarray*}}
\def\bn{\begin{enumerate}} \def\en{\end{enumerate}}
\def\bd{\begin{description}} \def\ed{\end{description}}
\title{Mathematical analysis of plasmonic resonances for nanoparticles: the full Maxwell equations\thanks{\footnotesize This work was supported  by the ERC Advanced Grant Project MULTIMOD--267184.}}
\date{}
\author{
Habib Ammari\thanks{\footnotesize Department of Mathematics, 
ETH Z\"urich, 
R\"amistrasse 101, CH-8092 Z\"urich, Switzerland (habib.ammari@math.ethz.ch, sanghyeon.yu@sam.math.ethz.ch). }
\and   Matias Ruiz\thanks{\footnotesize Department of Mathematics and Applications,
Ecole Normale Sup\'erieure, 45 Rue d'Ulm, 75005 Paris, France
(matias.ruiz@ens.fr).} 
\and Sanghyeon Yu\footnotemark[2]
\and  
Hai Zhang\thanks{\footnotesize 
Department of Mathematics, 
 HKUST,  Clear Water Bay, Kowloon, Hong Kong (haizhang@ust.hk).}
}
\begin{document}
\maketitle

\begin{abstract}
In this paper we use the full Maxwell equations for light propagation in order to analyze plasmonic resonances for nanoparticles. We mathematically define the notion of plasmonic resonance and analyze its shift and broadening with respect to changes in size, shape, and arrangement of the nanoparticles, using the layer potential techniques associated with the full Maxwell equations. We present an effective medium theory for resonant plasmonic systems and derive a condition on the volume fraction under  which the Maxwell-Garnett theory is valid at plasmonic resonances.  
\end{abstract}

\medskip

\bigskip

\noindent {\footnotesize Mathematics Subject Classification
(MSC2000): 35R30, 35C20.}

\noindent {\footnotesize Keywords: plasmonic resonance, Neumann-Poincar\'e operator, nanoparticle, scattering and absorption enhancements, Maxwell equations, Maxwell-Garnett theory.}


\section{Introduction} \label{sec-intro}

The aim of this paper is to analyze plasmon resonant nanoparticles.  
Plasmon resonant nanoparticles have unique capabilities of enhancing the brightness and directivity of light, confining strong electromagnetic fields, and outcoupling of light into advantageous directions \cite{SC10}. Recent advances in nanofabrication techniques have made it possible to construct complex nanostructures such as arrays using plasmonic nanoparticles as components. A thriving interest for optical studies of plasmon resonant nanoparticles is due to their recently proposed use as labels in molecular biology \cite{plasmon4}. New types of cancer diagnostic nanoparticles are constantly being developed. Nanoparticles are also being used in thermotherapy as nanometric heat-generators that can be activated remotely by external electromagnetic fields \cite{baffou2010mapping}.    
Plasmon resonances in nanoparticles can be treated at the quasi-static limit as an eigenvalue problem for the Neumann-Poincar\'e integral operator \cite{pierre, Gri12, plasmon1,plasmon3}. At this limit, they are size-independent. However, as the particle size increases, they are determined from scattering and absorption blow up and become size-dependent. This was experimentally observed, for instance, in \cite{tocho}. 

The objective of this paper is twofold: (i) To  analytically
investigate the plasmonic resonances of a single nanoparticle and analyze the shift and broadening of the plasmon resonance with changes in size and shape of the nanoparticles using the full Maxwell equations; (ii)  To derive a Maxwell-Garnett type theory for approximating the plasmonic resonances of a periodic arrangement of nanoparticles. The paper generalizes to the full Maxwell equations the results obtained in \cite{matias, hyeonbae} where the Helmholtz equation was used to model light propagation. It provides the first mathematical study of the shift in plasmon resonance using the full Maxwell equations. On the other hand, it rigorously shows the validity of the Maxwell-Garnett theory for arbitrary-shaped nanoparticles at plasmonic resonances.  
The paper is organized as follows. In section \ref{sec-Preliminaries max} we first review commonly used function spaces. Then we introduce layer potentials associated with the Laplace operator and recall their mapping properties. 
In section  \ref{sec-Layer potential fomulation max} we first derive a layer potential formulation for the scattering problem and then we obtain a first-order correction to plasmonic resonances in terms of the size of the nanoparticle.  
This will enable us to  analyze the shift and broadening of the plasmon resonance with changes in size and shape of the nanoparticles.  The resonance condition is determined from absorption and scattering blow up and depends on the shape, size and electromagnetic parameters of both the nanoparticle and the surrounding material. Surprisingly, it turns out that in this case not only the spectrum of the Neumann-Poincar\'e operator plays a role in the resonance of the nanoparticles,
but also its negative. We explain how in the quasi-static limit, only the spectrum of the Neumann-Poincar\'e operator can be excited. However, when the particle size increases and deviates from the dipole approximation, the resonances become size-dependent. Moreover, a part of the spectrum of negative of the Neumann-Poincar\'e operator can be excited as in higher-order terms in the expansion of the electric field versus the size of the particle. In section \ref{sec-quastatic max} we establish the quasi-static limit for the electromagnetic fields and derive a formula for the enhancement of the extinction cross-section. It is not clear for what kind of geometries in $\R^3$ the spectrum of the Neumann-Poincar\'e operator has symmetries, that is, if $\lambda \in \sigma (\mathcal{K}_D^*)$ so does $-\lambda$. In section \ref{sec-explicit shpere max} we provide calculations for the case of spherical nanoparticles wherein these symmetries are not present and we explicitly compute the shift in the spectrum of the Neumann-Poincar\'e operator and the extinction cross-section.  In section \ref{sectshell} we consider the case of a spherical shell and apply degenerate perturbation theory since the eigenvalues associated with the corresponding Neumann-Poincar\'e operator are not simple.  It is also worth mentioning that the spectrum of the associated Neumann-Poincar\'e operator is symmetric around zero. 
In section \ref{sec-Anisitrop max} we analyze the anisotropic quasi-static problem in terms of layer potentials and define the plasmonic resonances for anisotropic nanoparticles. Formulas for a small anisotropic perturbation of resonances of the isotropic formulas are derived. 
Finally, section \ref{sec-MaxGar max} is devoted to establish a Maxwell-Garnett type theory for approximating the plasmonic resonances of a periodic arrangement of arbitrary-shaped nanoparticles. The Maxwell-Garnett theory provides a simple model for calculating the macroscopic optical properties of materials with a dilute inclusion of spherical nanoparticles \cite{book2}. It is widely used to assign effective properties to systems of nanoparticles. We rigorously obtain effective properties of a periodic arrangement of arbitrary-shaped nanoparticles and  derive a condition on the volume fraction of the nanoparticles  that insures the validity of the Maxwell-Garnett theory for predicting the effective optical properties of systems of embedded in a dielectric host material at the plasmonic resonances.

\section{Preliminaries} \label{sec-Preliminaries max}
Let us first fix some notation, definitions and recall some useful results for the rest of this paper.
\begin{itemize}
\item For a simply connected domain $D\Subset\mathbb{R}^3$, $\nu$ denotes the outward normal to $\p D$ and $\f{\p}{\p \nu}$ the outward normal derivative;
\item $\varphi \big\vert_{\pm}(x) = \lim_{t\rightarrow 0^+}\varphi(x\pm t\nu)$;
\item $Id$ denotes the identity operator;
\item $\nabla\times$ denotes the curl operator for a vector field in $\R^3$;
\item For any functional space $E(\p D)$ defined on $\p D$, $E_0(\p D)$ denotes its zero mean subspace.
\end{itemize}
Here and throughout this paper, we assume that $D$ is simply connected and of class $\mathcal{C}^{1,\alpha}$ for $0<\alpha<1$.

Let $H^s(\p D)$ denote the usual Sobolev space of order $s$ on $\p D$ and $$H^{s}_T(\p D) = \left\{\varphi \in \big(H^{s}(\p D)\big)^3, \nu\cdot\varphi=0  \right\}.$$ Let $\nabla_{\p D}$, $\nabla_{\p D}\cdot$ and $\Delta_{\p D}$ denote the surface gradient, surface divergence and Laplace-Beltrami operator respectively and define the vectorial and scalar surface curl by $\vec{\text{curl}}_{\p D}\varphi = -\nu\times\nabla_{\p D}\varphi$ for $\varphi \in H^{\f{1}{2}}(\p D)$ and $\text{curl}_{\p D}\varphi = -\nu \cdot(\nabla_{\p D} \times \varphi)$ for $\varphi \in H^{-\f{1}{2}}_T(\p D)$, respectively.\\
Remind that
\beas
\nabla_{\p D}\cdot \nabla_{\p D} &=& \Delta_{\p D},\\
\text{curl}_{\p D}\vec{\text{curl}}_{\p D} &=& -\Delta_{\p D},\\
\nabla_{\p D}\cdot\vec{\text{curl}}_{\p D} &=& 0,\\
\text{curl}_{\p D}\nabla_{\p D} &=& 0.
\eeas

We introduce the following functional space:
\beas
H^{-\f{1}{2}}_T(\text{div},\p D) &=& \left\{ \varphi \in H^{-\f{1}{2}}_T(\p D), \nabla_{\p D}\cdot\varphi\in H^{-\f{1}{2}}(\p D) \right\}.
\eeas

Let $G$ be the Green function for the Helmholtz operator $\Delta + k^2$ satisfying the Sommerfeld radiation condition in dimension three
$$
\bigg| \frac{\partial G}{\partial |x|}   - i k G \bigg| \leq C |x|^{-2}
$$ 
for some constant $C$ as $|x| \rightarrow + \infty$, uniformly in $x/|x|$. 
 
The Green function $G$ is given by
\begin{equation} \label{green}
G(x,y,k)= -\frac{e^{i k |x-y|}}{4\pi |x-y|}.
\end{equation}
Define the following boundary integral operators
\bea \label{eq-SLPvect}
\vec{\mathcal{S}}_{D}^{k} [\varphi]: H^{-\f{1}{2}}_T(\p D) &\longrightarrow & H^{\f{1}{2}}_T(\p D) \\
\varphi &\longmapsto & \vec{\mathcal{S}}_{D}^{k} [\varphi](x) = \int_{\p D} G(x, y, k) \varphi(y) d\sigma(y),  \quad x \in \R^3; \nonumber
\eea
\bea \label{eq-SLP}
\mathcal{S}_{D}^{k} [\varphi]: H^{-\f{1}{2}}(\p D) &\longrightarrow & H^{\f{1}{2}}(\p D) \\
\varphi &\longmapsto & \mathcal{S}_{D}^{k} [\varphi](x) = \int_{\p D} G(x, y, k) \varphi(y) d\sigma(y),  \quad x \in \R^3; \nonumber
\eea
\bea \label{eq-NP}
\mathcal{K}_{D}^* [\varphi]: H^{-\f{1}{2}}(\p D) &\longrightarrow & H^{-\f{1}{2}}(\p D) \\
\varphi &\longmapsto & \mathcal{K}_{D}^* [\varphi](x) = \int_{\p D } \f{\p G(x, y, 0)}{ \p \nu(x)} \varphi(y) d\sigma(y) ,   \quad x \in \p D; \nonumber
\eea
\bea \label{eq-NPvect}
\mathcal{M}_{D}^{k} [\varphi]: H^{-\f{1}{2}}_T(\textnormal{div},\p D) &\longrightarrow & H^{-\f{1}{2}}_T(\textnormal{div},\p D) \\
\varphi &\longmapsto & \mathcal{M}_{D}^{k} [\varphi](x) = \int_{\p D} \nu(x)\times\nabla_x\times G(x, y, k)\varphi(y) d\sigma(y),  \quad x \in \p D; \nonumber
\eea
\bea \label{eq-L}
\mathcal{L}_{D}^{k} [\varphi]: H^{-\f{1}{2}}_T(\text{div},\p D) &\longrightarrow & H^{-\f{1}{2}}_T(\text{div},\p D) \\
\varphi &\longmapsto & \mathcal{L}_{D}^{k} [\varphi](x) = \nu(x)\times \bigg(k^2\vec{\mathcal{S}}_{D}^{k}[\varphi](x) + \nabla\mathcal{S}_D^k[\nabla_{\p D}\cdot\varphi](x)\bigg) ,   \quad x \in \p D. \nonumber
\eea

Throughout this paper, we denote  $\vec{\mathcal{S}}_{D}^{0},\mathcal{S}_{D}^{0},\mathcal{M}_{D}^{0}$ by $\vec{\mathcal{S}}_{D},\mathcal{S}_{D},\mathcal{M}_{D}$, respectively. We also denote $\mathcal{K}_{D}$ by the $(\cdot,\cdot)_{-\f{1}{2},\f{1}{2}}$-adjoint of $\mathcal{K}_{D}^*$, where $(\cdot,\cdot)_{-\f{1}{2},\f{1}{2}}$ is the  duality pairing between $H^{-\f{1}{2}}(\p D)$.

We recall now some useful results on the operator
 $\mathcal{K}_D^*$ \cite{book3, kang1, kang3, shapiro}. 
\begin{lem} \label{lem-Kstar_properties max}
\begin{enumerate}
\item[(i)] The following Calder\'on identity holds:
$\mathcal{K}_D \mathcal{S}_{D}= \mathcal{S}_{D}\mathcal{K}_D^*$;
\item[(ii)]
The operator $\mathcal{K}_D^*$ is compact self-adjoint in the Hilbert space $H^{-\f{1}{2}}(\p D)$ equipped with the following
inner product
\be \label{innerproduct}
(u, v)_{\mathcal{H}^*}= - (u, \mathcal{S}_{D}[v])_{-\f{1}{2},\f{1}{2}},
\ee
which is equivalent to $(\cdot, \cdot)_{-\f{1}{2}, \f{1}{2}}$;
\item[(iii)]
Let $(\lambda_j,\varphi_j) $, $j=0, 1, 2, \ldots$ be the eigenvalue and normalized eigenfunction pair of $\mathcal{K}_D^*$ in $\mathcal{H}^*(\p D)$. Then, $\lambda_j \in (-\f{1}{2}, \f{1}{2}]$, $\lambda_j \neq 1/2$ for $j\geq 1$, $\lambda_j \rightarrow 0$ as $j \rightarrow \infty$ and $\varphi_j\in\mathcal{H}^*_0 (\partial D)$ for $j \geq 1$, where $\mathcal{H}^*_0 (\partial D)$ is the zero mean subspace of $\mathcal{H}^* (\partial D)$;
\item[(iv)]
The following representation formula holds: for any $\psi \in H^{-1/2}(\p D)$,
$$
\mathcal{K}_D^* [\psi]
= \sum_{j=0}^{\infty} \lambda_j (\psi, \varphi_j)_{\mathcal{H}^*} \otimes \varphi_j;
$$
\item[(v)]
 The following trace formula holds: for any $\psi\in \mathcal{H}^*(\p D)$, 
$$
(\pm\f{1}{2}Id+\mathcal{K}_D^*)[\varphi] = \f{\p \mathcal{S}_D[\varphi]}{\p \nu}\Big\vert_{\pm}.
$$
\item[(vi)] Let $\mathcal{H}(\p D)$ be the space $H^{\f{1}{2}}(\p D)$ equipped with the following equivalent inner product
\be
(u, v)_{\mathcal{H}}= -(\mathcal{S}_{D}^{-1}[u], v)_{-\f{1}{2},\f{1}{2}}.
\ee
Then, 
$\mathcal{S}_{D}$ is an isometry between $\mathcal{H}^*(\p D)$ and $\mathcal{H}(\p D)$.
\end{enumerate}
\end{lem}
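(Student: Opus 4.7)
The plan is to treat the six items of the lemma as a chain: the Calder\'on identity in (i) is the backbone that drives self-adjointness in (ii), and the remaining statements follow from spectral theory, standard variational arguments, and classical jump relations.

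For (i), I would start from the symmetry $G(x,y,0)=G(y,x,0)$ of the free-space Laplace kernel and compute $\mathcal{S}_D\mathcal{K}_D^*[\varphi](x)$ by inserting the definition of $\mathcal{K}_D^*$ and applying Fubini; swapping the order of integration and recognizing $\partial G/\partial\nu(y)$ as the kernel of $\mathcal{K}_D$ (acting in the $x$ variable through the symmetry) then gives $\mathcal{S}_D\mathcal{K}_D^*=\mathcal{K}_D\mathcal{S}_D$. For (ii), compactness on $H^{-1/2}(\partial D)$ is standard for a $\mathcal{C}^{1,\alpha}$ boundary, because the kernel $\partial_{\nu(x)}G(x,y,0)$ is weakly singular and maps $H^{-1/2}$ into itself compactly. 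Self-adjointness in $(\cdot,\cdot)_{\mathcal{H}^*}$ follows directly from (i):
\begin{equation*}
(\mathcal{K}_D^*u,v)_{\mathcal{H}^*}=-(\mathcal{K}_D^*u,\mathcal{S}_Dv)_{-\tfrac12,\tfrac12}=-(u,\mathcal{K}_D\mathcal{S}_Dv)_{-\tfrac12,\tfrac12}=-(u,\mathcal{S}_D\mathcal{K}_D^*v)_{-\tfrac12,\tfrac12}=(u,\mathcal{K}_D^*v)_{\mathcal{H}^*},
\end{equation*}
while equivalence of the new inner product with the standard $H^{-1/2}$ one reduces to showing that $-\mathcal{S}_D\colon H^{-1/2}(\partial D)\to H^{1/2}(\partial D)$ is positive and invertible. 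The cleanest way is to use the energy identity $-(u,\mathcal{S}_D[u])_{-\tfrac12,\tfrac12}=\int_{\mathbb{R}^3}|\nabla \mathcal{S}_D[u]|^2\,dx$, which is strictly positive in $\mathbb{R}^3$ for $u\neq 0$, combined with the fact that $\mathcal{S}_D$ is a bounded isomorphism between these spaces in three dimensions.

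For (iii), the spectral theorem for compact self-adjoint operators on the Hilbert space $\mathcal{H}^*(\partial D)$ already yields $\lambda_j\to 0$ and an orthonormal basis $\{\varphi_j\}$. To pin the spectrum in $(-\tfrac12,\tfrac12]$, I would combine the jump formula of (v) with Green's identity: writing $u=\mathcal{S}_D[\varphi]$, the interior and exterior Dirichlet energies equal $-((\tfrac12Id\mp\mathcal{K}_D^*)\varphi,\mathcal{S}_D\varphi)$, so positivity of both energies forces $|\lambda|\le\tfrac12$, with equality $\lambda=\tfrac12$ occurring only for eigenfunctions whose single-layer potential is constant inside $D$, i.e.\ the equilibrium distribution (simple eigenvalue with constant ratio). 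The zero-mean property of $\varphi_j$ for $j\ge 1$ is then the orthogonality against the constant eigenfunction $\varphi_0$ in $\mathcal{H}^*$. Item (iv) is nothing more than the spectral expansion applied to $\mathcal{K}_D^*$ using the basis from (iii).

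For (v), the trace formula is the classical Plemelj-like jump relation for the Laplace single-layer potential on $\mathcal{C}^{1,\alpha}$ boundaries, which I would recall rather than reprove. Finally, (vi) follows by combining (ii) and the definition of the inner products: $(\mathcal{S}_Du,\mathcal{S}_Dv)_{\mathcal{H}}=-(\mathcal{S}_D^{-1}\mathcal{S}_Du,\mathcal{S}_Dv)_{-\tfrac12,\tfrac12}=-(u,\mathcal{S}_Dv)_{-\tfrac12,\tfrac12}=(u,v)_{\mathcal{H}^*}$, so $\mathcal{S}_D$ is an isometry once one knows it is a bijection $\mathcal{H}^*\to\mathcal{H}$. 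I expect the main obstacle to be the sharp spectral bound in (iii): showing strictly that $\lambda_j\in(-\tfrac12,\tfrac12]$ and not merely $[-\tfrac12,\tfrac12]$ requires ruling out $\lambda=-\tfrac12$, which follows from the unique solvability of the exterior Dirichlet problem (an eigenfunction at $-\tfrac12$ would produce a nontrivial decaying harmonic function in the exterior with zero trace). Everything else is bookkeeping once the Calder\'on identity and the jump relations are in place.
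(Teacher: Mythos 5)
The paper does not prove this lemma; it is stated as a collection of known facts with citations to \cite{book3,kang1,kang3,shapiro}, so there is no internal proof to compare against. Your outline for items (ii)--(vi) is correct and follows the standard route: compactness from the weakly singular kernel on a $\mathcal{C}^{1,\alpha}$ boundary, self-adjointness deduced from (i), equivalence of the inner products via the Dirichlet energy identity, the spectral theorem plus the interior/exterior energy bounds for $\lambda_j\in(-\tfrac12,\tfrac12]$, zero mean for $j\ge1$ from $\mathcal{H}^*$-orthogonality to the equilibrium density, the classical jump relation, and the direct computation for the isometry.

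However, your proposed proof of item (i), the Calder\'on identity, has a genuine gap. After inserting the definitions and applying Fubini, the kernel of $\mathcal{S}_D\mathcal{K}_D^*$ is
\[
K_1(x,y)=\int_{\partial D}G(x,z)\,\frac{\partial G(z,y)}{\partial\nu(z)}\,d\sigma(z),
\]
while the kernel of $\mathcal{K}_D\mathcal{S}_D$ is
\[
K_2(x,y)=\int_{\partial D}\frac{\partial G(x,z)}{\partial\nu(z)}\,G(z,y)\,d\sigma(z).
\]
Using only $G(x,z)=G(z,x)$, what you obtain is $K_2(x,y)=K_1(y,x)$, i.e.\ that $\mathcal{K}_D\mathcal{S}_D$ is the $L^2$-adjoint of $\mathcal{S}_D\mathcal{K}_D^*$. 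That is a tautology (already built into the definitions: $\mathcal{S}_D$ has a symmetric kernel and $\mathcal{K}_D$ is defined as the adjoint of $\mathcal{K}_D^*$), not the Calder\'on identity $K_1=K_2$, which is equivalent to the \emph{symmetry} of $K_1$ itself --- exactly what needs to be established. Kernel symmetry and Fubini cannot produce it; the actual proof uses Green's second identity on $D$ and on $\mathbb{R}^3\setminus\bar D$ applied to the harmonic functions $\mathcal{S}_D[\varphi]$ and $\mathcal{S}_D[\psi]$ together with the jump relations of item (v) and decay at infinity. Since your self-adjointness argument in (ii) leans on (i), this gap propagates; you should prove (i) by the Green-identity route (the same ingredients you already invoke in (iii)) rather than a formal Fubini manipulation.
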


The following result holds.
\begin{lem} \label{lem-Helm_decomposition} The following Helmholtz decomposition holds \cite{buffa2002traces}:
\beas
H^{-\f{1}{2}}_T(\textnormal{div},\p D) = \nabla_{\p D}H^{\f{3}{2}}(\p D) \oplus \vec{\textnormal{curl}}_{\p D}H^{\f{1}{2}}(\p D).
\eeas
\end{lem}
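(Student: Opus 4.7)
The plan is to build the decomposition explicitly by solving two scalar problems on the closed surface $\p D$, then to verify directness using the identities already recalled in the preliminaries. The key observation is that the surface divergence annihilates $\vec{\textnormal{curl}}_{\p D}$ while $\textnormal{curl}_{\p D}$ annihilates $\nabla_{\p D}$, so the two summands are detected by mutually orthogonal differential operators.

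Given $\varphi\in H^{-\f12}_T(\textnormal{div},\p D)$, I would first set $f:=\nabla_{\p D}\cdot \varphi\in H^{-\f12}(\p D)$. Because $\p D$ is a closed surface, integration by parts shows $\int_{\p D} f\,d\sigma = 0$, so $f$ lies in the zero-mean subspace $H^{-\f12}_0(\p D)$. On that subspace the Laplace--Beltrami operator $\Delta_{\p D}\colon H^{\f32}_0(\p D)\to H^{-\f12}_0(\p D)$ is an isomorphism (by standard elliptic theory on a compact $\mathcal{C}^{1,\alpha}$ manifold without boundary), so there is a unique $u\in H^{\f32}_0(\p D)$ with $\Delta_{\p D}u = f$, i.e.\ $\nabla_{\p D}\cdot(\nabla_{\p D}u) = \nabla_{\p D}\cdot\varphi$ using the first of the recalled identities. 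Then $\psi := \varphi - \nabla_{\p D}u$ belongs to $H^{-\f12}_T(\textnormal{div},\p D)$ and satisfies $\nabla_{\p D}\cdot\psi=0$.

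Next I would construct $v\in H^{\f12}(\p D)$ with $\psi = \vec{\textnormal{curl}}_{\p D}v$. Applying $\textnormal{curl}_{\p D}$ to the desired identity and using $\textnormal{curl}_{\p D}\vec{\textnormal{curl}}_{\p D} = -\Delta_{\p D}$ suggests defining $v$ as the unique zero-mean solution in $H^{\f12}(\p D)$ of
\[
-\Delta_{\p D} v = \textnormal{curl}_{\p D}\psi,
\]
the right-hand side lying in $H^{-\f32}(\p D)$ with zero mean. One then has to check that $\vec{\textnormal{curl}}_{\p D}v$ and $\psi$ actually coincide, not merely share the same scalar curl. This is where simple connectedness of $\p D$ enters: the difference $\psi - \vec{\textnormal{curl}}_{\p D}v$ is tangential, divergence-free and curl-free on $\p D$, so it is a surface-harmonic tangential field; since $\p D$ is a topological sphere, the space of such fields is trivial (the first cohomology of $\p D$ vanishes). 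Thus $\psi = \vec{\textnormal{curl}}_{\p D}v$, giving the existence part of the decomposition together with the claimed mapping properties.

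Directness then follows from the same identities: if $\nabla_{\p D}u = \vec{\textnormal{curl}}_{\p D}v$, taking $\nabla_{\p D}\cdot$ gives $\Delta_{\p D}u = 0$ so $u$ is constant and $\nabla_{\p D}u=0$, whence $\vec{\textnormal{curl}}_{\p D}v=0$ and, by the same argument with $\textnormal{curl}_{\p D}$, $v$ is constant. Closedness of each summand is a consequence of the isomorphism properties of $\Delta_{\p D}$ between the relevant Sobolev scales, which also gives continuity of the projectors $\varphi\mapsto \nabla_{\p D}u$ and $\varphi\mapsto \vec{\textnormal{curl}}_{\p D}v$. The main obstacle is the vanishing of surface-harmonic tangential fields; on a general Lipschitz surface it requires some care to make tangential traces, the surface curl on $H^{-\f12}_T$, and the elliptic theory for $\Delta_{\p D}$ all consistent, which is precisely the content of \cite{buffa2002traces} that allows us to invoke this step rigorously in the $\mathcal{C}^{1,\alpha}$ setting considered here.
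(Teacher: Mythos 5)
The paper does not prove this lemma; it simply cites Buffa, Costabel, and Sheen \cite{buffa2002traces}. Your sketch reconstructs the standard Hodge-type argument that underlies that reference, and it is essentially correct: solve $\Delta_{\p D}u=\nabla_{\p D}\cdot\varphi$ to strip off the gradient part, solve $-\Delta_{\p D}v=\textnormal{curl}_{\p D}\psi$ to produce the curl part, then kill the harmonic remainder using the vanishing of the first cohomology of $\p D$, and read off directness from $\nabla_{\p D}\cdot\vec{\textnormal{curl}}_{\p D}=0$ and $\textnormal{curl}_{\p D}\nabla_{\p D}=0$. Two small points worth making precise. First, the second elliptic problem lives at a lower Sobolev scale than the one stated in the paper's remark: $\textnormal{curl}_{\p D}\psi\in H^{-3/2}_0(\p D)$, so you are implicitly invoking the isomorphism $\Delta_{\p D}:H^{1/2}_0(\p D)\to H^{-3/2}_0(\p D)$, which follows from the $H^{3/2}_0\to H^{-1/2}_0$ case by duality and interpolation but should be said. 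Second, the cohomological step deserves a word: $D$ being simply connected forces the connected components of $\p D$ to be topological spheres (by the ``half lives, half dies'' argument for $H_1(\p D)\to H_1(\bar D)$), so $H^1(\p D)=0$ and the divergence-free, curl-free tangential remainder vanishes; this is exactly where simple connectedness of $D$, and not merely smoothness of $\p D$, is used. With those remarks, your argument is a faithful and more explicit version of what the cited reference provides.
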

\begin{rmk}
The Laplace-Beltrami operator $\Delta_{\p D}: H^{\f{3}{2}}_0(\p D) \rightarrow H^{-\f{1}{2}}_0(\p D)$ is invertible. Here $H^{\f{3}{2}}_0(\p D)$ and $H^{-\f{1}{2}}_0(\p D)$ are the zero mean subspaces of $H^{\f{3}{2}}(\p D)$ and $H^{-\f{1}{2}}(\p D)$ respectively.
\end{rmk} 

The following results on the operator $\mathcal{M}_D$ are of great importance.
\begin{lem}
$\mathcal{M}_D:H^{-\f{1}{2}}_T(\textnormal{div},\p D) \longrightarrow  H^{-\f{1}{2}}_T(\textnormal{div},\p D)$ is a compact operator.
\end{lem}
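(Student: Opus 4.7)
The plan is to exploit the vector triple product identity together with the tangentiality of $\varphi$ and the $C^{1,\alpha}$ regularity of $\partial D$ to show that the kernel of $\mathcal{M}_D$ is weakly singular, hence $\mathcal{M}_D$ is smoothing and compactness follows by a Rellich-type argument. First I would write
\[
\nu(x)\times\bigl(\nabla_x G(x,y,0)\times\varphi(y)\bigr)
=\nabla_x G(x,y,0)\,\bigl(\nu(x)\cdot\varphi(y)\bigr)
-\varphi(y)\,\frac{\partial G(x,y,0)}{\partial\nu(x)},
\]
splitting $\mathcal{M}_D=\mathcal{M}_D^{(1)}+\mathcal{M}_D^{(2)}$ accordingly. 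Each summand has an a priori $|x-y|^{-2}$ singularity from $\nabla_x G$, which is critical on the two-dimensional surface $\partial D$.

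The next step is to gain a factor $|x-y|^{\alpha}$ in both kernels. For $\mathcal{M}_D^{(1)}$, since $\varphi$ is tangential one has $\nu(y)\cdot\varphi(y)=0$, so $\nu(x)\cdot\varphi(y)=(\nu(x)-\nu(y))\cdot\varphi(y)$, and the $\mathcal{C}^{1,\alpha}$ regularity of $\partial D$ yields $|\nu(x)-\nu(y)|\le C|x-y|^{\alpha}$. For $\mathcal{M}_D^{(2)}$, the usual cancellation $(x-y)\cdot\nu(x)=O(|x-y|^{1+\alpha})$ on a $\mathcal{C}^{1,\alpha}$ surface gives $|\partial G/\partial\nu(x)|\le C|x-y|^{-2+\alpha}$. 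Thus both pieces admit kernels bounded by $C|x-y|^{-2+\alpha}$, i.e.\ they are weakly singular, and standard potential-theoretic estimates then show that each $\mathcal{M}_D^{(j)}$ maps $L^2_T(\partial D)$ continuously into a space of Hölder- or Sobolev-regularity strictly higher than $H^{-1/2}_T(\partial D)$. By the Rellich compact embedding, $\mathcal{M}_D$ is compact on $H^{-1/2}_T(\partial D)$.

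To promote compactness to the graph-norm space $H^{-1/2}_T(\mathrm{div},\partial D)$ I would use the Helmholtz decomposition of Lemma \ref{lem-Helm_decomposition}. Observing that $\mathcal{M}_D[\varphi](x)=\nu(x)\times\bigl(\nabla\times\vec{\mathcal{S}}_D[\varphi]\bigr)(x)\big|_{\partial D}$ and that the extended field $v:=\nabla\times\vec{\mathcal{S}}_D[\varphi]$ is divergence-free off $\partial D$, one can re-express $\nabla_{\partial D}\!\cdot\mathcal{M}_D[\varphi]$ via surface differential identities (relating it to $\mathrm{curl}_{\partial D}$ of $v$ and to $k^2$-type terms that vanish here because $k=0$) as another boundary integral whose kernel, after the same tangentiality/normal-cancellation argument applied to $\varphi=\nabla_{\partial D}f+\vec{\mathrm{curl}}_{\partial D}g$, is again weakly singular. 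Compactness in the div-norm then follows from the same Rellich-type embedding.

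The main obstacle is precisely this last step: the natural kernel estimates give compactness only in $H^{-1/2}_T(\partial D)$, and lifting this to the graph norm requires a careful identification of $\nabla_{\partial D}\!\cdot\mathcal{M}_D[\varphi]$ as a composition of a weakly singular operator with a bounded surface differential operator. Handling this neatly on a merely $\mathcal{C}^{1,\alpha}$ surface, and tracking the Hodge components of $\varphi$ through the two pieces $\mathcal{M}_D^{(1)}$, $\mathcal{M}_D^{(2)}$, is where most of the work lies; the singularity analysis itself, by contrast, is routine once the vector identity and tangentiality have been used.
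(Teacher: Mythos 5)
Your kernel-splitting analysis for $\mathcal{M}_D$ on $H^{-1/2}_T(\partial D)$ is correct as far as it goes: the vector triple product identity, the use of tangentiality to replace $\nu(x)\cdot\varphi(y)$ by $(\nu(x)-\nu(y))\cdot\varphi(y)$, and the normal cancellation $(x-y)\cdot\nu(x)=O(|x-y|^{1+\alpha})$ all produce the expected $|x-y|^{-2+\alpha}$ weakly singular kernels. Two remarks, however. First, the sentence about mapping $L^2_T$ into a higher-regularity space is not quite what you need: compactness on $H^{-1/2}_T(\partial D)$ requires boundedness $\mathcal{M}_D:H^{-1/2}_T(\partial D)\to H^{-1/2+\epsilon}_T(\partial D)$ for some $\epsilon>0$, followed by Rellich; this is obtainable from the kernel bound by interpolation/duality, but it should be phrased that way. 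Second, and more importantly, you correctly identify the real obstacle — control of $\nabla_{\partial D}\cdot\mathcal{M}_D[\varphi]$ in the graph norm — and then do not actually close it.

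The paper does not print a proof of this lemma, but it hands you exactly the tool needed to close that gap in Lemma \ref{lem-PropertiesDecomp max}: the commutation identities
\beas
\mathcal{M}_{D}[\vec{\textnormal{curl}}_{\p D}\varphi] &=& \vec{\textnormal{curl}}_{\p D}\mathcal{K}_D[\varphi],\\
\mathcal{M}_{D}[\nabla_{\p D}\varphi] &=& -\nabla_{\p D}\Delta^{-1}_{\p D}\mathcal{K}_D^{*}[\Delta_{\p D}\varphi] + \vec{\textnormal{curl}}_{\p D}\mathcal{R}_D[\varphi].
\eeas
Combined with the Helmholtz decomposition of Lemma \ref{lem-Helm_decomposition}, these express $\mathcal{M}_D$ in the block form $\widetilde{\mathcal{M}}_D$ used later in the paper, whose entries are $\pm\mathcal{K}_D^*$, $\mathcal{K}_D$ and $\mathcal{R}_D$ conjugated by the bounded isomorphisms $\Delta_{\p D}$, $\nabla_{\p D}$, $\vec{\textnormal{curl}}_{\p D}$. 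Since $\mathcal{K}_D$ and $\mathcal{K}_D^*$ are compact (Lemma \ref{lem-Kstar_properties max}), and your weakly singular kernel bound is precisely what shows $\mathcal{R}_D=-\Delta_{\p D}^{-1}\textnormal{curl}_{\p D}\mathcal{M}_D\nabla_{\p D}$ is compact (here one only needs smoothing of $\mathcal{M}_D$ on tangential fields, not compactness in the div-norm, so there is no circularity), the block operator is compact and so is $\mathcal{M}_D$ on $H^{-1/2}_T(\textnormal{div},\p D)$. In short: your singularity analysis is a useful ingredient, but it should be deployed to establish compactness of $\mathcal{R}_D$, after which the algebraic identities — not further kernel estimates for $\nabla_{\p D}\cdot\mathcal{M}_D$ — give the compactness in the div-norm. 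As written, your proof stops exactly where the argument becomes nontrivial.
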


\begin{lem} \label{lem-PropertiesDecomp max} The following identities hold \cite{pierre, griesmaier2008asymptotic}:
\beas 
\mathcal{M}_{D}[\vec{\textnormal{curl}}_{\p D}\varphi] &=& \vec{\textnormal{curl}}_{\p D}\mathcal{K}_D[\varphi], \quad \forall \varphi \in H^{\f{1}{2}}(\p D), \\
\mathcal{M}_{D}[\nabla_{\p D}\varphi] &=& -\nabla_{\p D}\Delta^{-1}_{\p D}\mathcal{K}_D^{*}[\Delta_{\p D}\varphi] + \vec{\textnormal{curl}}_{\p D}\mathcal{R}_D[\varphi], \quad \forall \varphi \in H^{\f{3}{2}}(\p D),
\eeas
where $\mathcal{R}_D = -\Delta^{-1}_{\p D}\textnormal{curl}_{\p D}\mathcal{M}_D\nabla_{\p D}$.
\end{lem}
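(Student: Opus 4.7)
The plan is to reduce the vectorial operator $\mathcal{M}_D$ to the scalar Neumann--Poincar\'e operators by using the representation $\mathcal{M}_D[\varphi](x) = \nu(x) \times (\nabla_x \times \vec{\mathcal{S}}_D[\varphi])(x)$ on $\p D$, and by exploiting the Helmholtz decomposition from Lemma 2.2 together with the invertibility of $\Delta_{\p D}$ on the zero-mean subspace stated in the subsequent remark. Both identities amount to decoupling the action of $\mathcal{M}_D$ on the two Helmholtz summands; the outputs can then be characterized component by component by applying $\nabla_{\p D}\cdot$ and $\textnormal{curl}_{\p D}$ and using the relations among the surface differential operators recalled in the preliminaries.

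For the first identity, let $F(x) := \vec{\mathcal{S}}_D[\vec{\textnormal{curl}}_{\p D}\varphi](x)$ for $x \in \R^3\setminus\p D$. Using $\vec{\textnormal{curl}}_{\p D}\varphi = -\nu \times \nabla_{\p D}\varphi$, surface integration by parts on the closed surface $\p D$, and the identity $\nabla_{\p D}\cdot\vec{\textnormal{curl}}_{\p D} = 0$, I would show that $\nabla\cdot F \equiv 0$ off $\p D$. Combined with the harmonicity of $G(\cdot,y,0)$ off its singularity, this gives $\nabla\times\nabla\times F = \nabla(\nabla\cdot F) - \Delta F = 0$ in $\R^3\setminus\p D$, so $\nabla\times F = \nabla\Phi$ for some $\Phi$ harmonic in each component of $\R^3\setminus\p D$. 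A direct computation relying on the auxiliary identity $\mathcal{D}_D[\varphi] = -\nabla\cdot\mathcal{S}_D[\varphi\nu]$ identifies $\Phi = -\mathcal{D}_D[\varphi]$ up to locally constant terms. Restricting to $x\in\p D$ and taking $\nu(x)\times$, noting that $\nu\times\nabla = \nu\times\nabla_{\p D}$ and that the tangential gradient of $\mathcal{D}_D[\varphi]$ is continuous across $\p D$ and equals $\nabla_{\p D}\mathcal{K}_D[\varphi]$, yields $\mathcal{M}_D[\vec{\textnormal{curl}}_{\p D}\varphi] = -\nu\times\nabla_{\p D}\mathcal{K}_D[\varphi] = \vec{\textnormal{curl}}_{\p D}\mathcal{K}_D[\varphi]$.

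For the second identity, by Lemma 2.2 write uniquely $\mathcal{M}_D[\nabla_{\p D}\varphi] = \nabla_{\p D}u + \vec{\textnormal{curl}}_{\p D}v$ with $u\in H^{\f 32}_0(\p D)$ and $v\in H^{\f 12}_0(\p D)$. Applying $\nabla_{\p D}\cdot$ to both sides and using $\nabla_{\p D}\cdot\nabla_{\p D} = \Delta_{\p D}$ and $\nabla_{\p D}\cdot\vec{\textnormal{curl}}_{\p D}=0$ gives $\Delta_{\p D}u = \nabla_{\p D}\cdot\mathcal{M}_D[\nabla_{\p D}\varphi]$. The crucial identity to establish is
\begin{equation*}
\nabla_{\p D}\cdot\mathcal{M}_D[\nabla_{\p D}\varphi] = -\mathcal{K}_D^*[\Delta_{\p D}\varphi],
\end{equation*}
whereupon invertibility of $\Delta_{\p D}$ on $H^{-\f 12}_0(\p D)$ gives $u = -\Delta_{\p D}^{-1}\mathcal{K}_D^*[\Delta_{\p D}\varphi]$. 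Applying $\textnormal{curl}_{\p D}$ to the decomposition and using $\textnormal{curl}_{\p D}\nabla_{\p D}=0$ and $\textnormal{curl}_{\p D}\vec{\textnormal{curl}}_{\p D}=-\Delta_{\p D}$ gives $v = -\Delta_{\p D}^{-1}\textnormal{curl}_{\p D}\mathcal{M}_D[\nabla_{\p D}\varphi] = \mathcal{R}_D[\varphi]$ by the defining formula for $\mathcal{R}_D$.

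The main obstacle is the key computation $\nabla_{\p D}\cdot\mathcal{M}_D[\nabla_{\p D}\varphi] = -\mathcal{K}_D^*[\Delta_{\p D}\varphi]$. The strategy is to commute $\nabla_{\p D,x}\cdot$ past the cross product with $\nu(x)$ (using that $\nabla_{\p D}\cdot(\nu\times A)$ yields a scalar surface curl of the tangential part of $A$), then perform a careful surface integration by parts on $\vec{\mathcal{S}}_D[\nabla_{\p D}\varphi]$ so as to move a surface derivative off $\varphi$ onto the kernel, producing the factor $\p G(x,y)/\p\nu(y)$ that defines $\mathcal{K}_D^*$ via Lemma 2.1(v). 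The delicate bookkeeping of the mean-curvature corrections generated when one performs surface IBP on non-tangential vector fields such as $\nu(y)\times\nabla_x G(x,y)$ is the technically demanding step, and choosing a harmonic (or normal-zero) extension of $\varphi$ near $\p D$ to avoid spurious normal contributions is what makes the cancellations transparent.
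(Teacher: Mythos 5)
The paper itself does not prove this lemma, citing \cite{pierre, griesmaier2008asymptotic}, so there is no in-text argument to compare against; I therefore assess your proposal on its own terms. Your overall structure is right: split via the Helmholtz decomposition, determine the two components by applying $\nabla_{\p D}\cdot$ and $\textnormal{curl}_{\p D}$, and reduce to scalar layer potential operators via jump relations. Your treatment of the first identity is correct in outline, and you have correctly isolated the one nontrivial fact needed for the second, namely $\nabla_{\p D}\cdot\mathcal{M}_D[\nabla_{\p D}\varphi] = -\mathcal{K}_D^*[\Delta_{\p D}\varphi]$.

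The gap is that this last identity is not actually proved: you leave it as a ``technically demanding'' surface integration by parts with ``delicate bookkeeping of mean-curvature corrections'' and a to-be-chosen extension of $\varphi$. That is the hard way, and it is not carried out. Instead, mirror the off-surface strategy you already use for the first identity. Set $F := \vec{\mathcal{S}}_D[\nabla_{\p D}\varphi]$ on $\mathbb{R}^3\setminus\p D$. Since the density is tangential, $\nabla\cdot F = \mathcal{S}_D[\nabla_{\p D}\cdot\nabla_{\p D}\varphi] = \mathcal{S}_D[\Delta_{\p D}\varphi]$ off $\p D$; harmonicity of $F$ then gives $\nabla\times\nabla\times F = \nabla(\nabla\cdot F) = \nabla\mathcal{S}_D[\Delta_{\p D}\varphi]$ there. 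Now take normal traces from either side of $\p D$: on one hand $\nu\cdot\nabla\mathcal{S}_D[\Delta_{\p D}\varphi]\big|_{\pm} = (\pm\tfrac12 Id + \mathcal{K}_D^*)[\Delta_{\p D}\varphi]$ by Lemma \ref{lem-Kstar_properties max}(v); on the other hand the elementary identity $\nu\cdot(\nabla\times A) = -\nabla_{\p D}\cdot(\nu\times A)$ for ambient fields $A$ gives $\nu\cdot(\nabla\times\nabla\times F)\big|_{\pm} = -\nabla_{\p D}\cdot\big((\mp\tfrac12 Id + \mathcal{M}_D)[\nabla_{\p D}\varphi]\big)$. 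Equating, the $\pm\tfrac12\Delta_{\p D}\varphi$ contributions cancel and the needed identity drops out directly, with no curvature corrections and no extension of $\varphi$. With that in hand, $\Delta_{\p D}u = -\mathcal{K}_D^*[\Delta_{\p D}\varphi]$, which lies in $H^{-1/2}_0(\p D)$ because $\mathcal{K}_D^*$ preserves the zero-mean subspace, so $u = -\Delta_{\p D}^{-1}\mathcal{K}_D^*[\Delta_{\p D}\varphi]$ and $v = \mathcal{R}_D[\varphi]$ as you state.
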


\section{Layer potential formulation for the scattering problem} \label{sec-Layer potential fomulation max}
We consider the scattering problem of a time-harmonic electromagnetic wave incident on a plasmonic nanoparticle. The homogeneous medium is characterized by electric permittivity $\varepsilon_m$ and magnetic permeability $\mu_m$, while the particle occupying a bounded and simply connected domain $D\Subset\mathbb{R}^3$ of class $\mathcal{C}^{1,\alpha}$ for $0<\alpha<1$ is characterized by electric permittivity $\varepsilon_c$ and magnetic permeability $\mu_c$, both of which depend on the frequency. Define
$$
k_m = \omega \sqrt{\eps_m \mu_m}, \quad k_c = \omega \sqrt{\eps_c \mu_c},
$$
and
$$
\eps_D= \eps_m \chi(\R^3 \backslash \bar{D}) + \eps_c \chi({D}), \quad
\mu_D= \eps_m \chi(\R^3 \backslash \bar{D})+ \eps_c \chi({D}),
$$
where $\chi$ denotes the characteristic function.

For a given incident plane wave $(E^i,H^i)$, solution to the Maxwell equations in free space
\beas
\nabla \times E^i &=& i\om\mu_m H^i \quad \text{in }\mathbb{R}^3,\\
\nabla \times H^i &=& -i\om\eps_m E^i \quad \text{in }\mathbb{R}^3,
\eeas
the scattering problem can be modeled by the following system of equations
\bea \label{eq-Maxwell} 
\nabla \times E &=& i\om\mu_D H \quad \text{in }\mathbb{R}^3\backslash \p D, \nonumber\\
\nabla \times H &=& -i\om\eps_D E \quad \text{in }\mathbb{R}^3\backslash \p D,\\ 
\nu\times E\big\vert_+ -\nu\times E\big\vert_- &=& \nu\times H\big\vert_+ -\nu\times H\big\vert_- = 0 \quad \text{on } \p D, \nonumber
\eea
subject to the Silver-M\"uller radiation condition:
\beas
\lim _{|x|\rightarrow \infty} |x| \big(\sqrt{\mu_m}(H-H^i)(x)\times\f{x}{|x|}-\sqrt{\eps_m}(E-E^i) (x)\big) = 0
\eeas
 uniformly in $x/|x|$. 
Here and throughout the paper, the subscripts $\pm$ indicate, as said before,  the limits from outside and inside $D$, respectively.

Using the boundary integral operators \eqref{eq-SLPvect} and \eqref{eq-NPvect}, the solution to \eqref{eq-Maxwell} can be represented as \cite{Torres}
\be \label{eq-Maxwell_Layer}
E(x) = \left\{ \begin{array}{ll}
E^i(x) + \mu_m\nabla\times\vec{\mathcal{S}}_{D}^{k_m}[\psi](x) + \nabla\times\nabla\times\vec{\mathcal{S}}_{D}^{k_m}[\phi](x) & \quad x\in \mathbb{R}^3\backslash\bar{D},\\
\mu_c\nabla\times\vec{\mathcal{S}}_{D}^{k_c}[\psi](x) + \nabla\times\nabla\times\vec{\mathcal{S}}_{D}^{k_c}[\phi](x) & \quad x\in D,
\end{array}
\right.
\ee
and
\be
H(x) = -\f{i}{\om\mu_D}(\nabla\times E)(x) \quad x\in \mathbb{R}^3\backslash \p D,
\ee
where the pair $(\psi,\phi)\in \big(H^{-\f{1}{2}}_T(\textnormal{div}, \p D)\big)^2$ is the unique solution to
\be \label{eq-Maxwell_System}
\left( \begin{array}{cc}
\df{\mu_c + \mu_m}{2}Id + \mu_c\mathcal{M}_D^{k_c}-\mu_m\mathcal{M}_D^{k_m} & \mathcal{L}_D^{k_c}-\mathcal{L}_D^{k_m}\\
\mathcal{L}_D^{k_c}-\mathcal{L}_D^{k_m} & \left(\df{k_c^2}{2\mu_c}+\df{k_m^2}{2\mu_m}\right)Id + \df{k_c^2}{\mu_c}\mathcal{M}_D^{k_c}-\df{k_m^2}{\mu_m}\mathcal{M}_D^{k_m}  
\end{array} \right)\left(\begin{array}{c}
\psi\\
\phi
\end{array}\right) = \left(\begin{array}{c}
\nu\times E^i\\
i\om\nu\times H^i\end{array}\right)\Bigg\vert_{\p D}.
\ee

Let $D = z + \delta B$ where $B$ contains the origin and $|B| = O(1)$. For any $x\in \p D$, let $\widetilde{x} = \f{x-z}{\delta}\in \p B$ and define for each function $f$ defined on $\p D$, a corresponding function defined on $B$ as follows
\be \label{defeta}
\eta(f)(\widetilde{x}) = f(z + \delta \widetilde{x}).
\ee
Throughout this paper, for two Banach spaces $X$ and $Y$, by  $\mathcal{L}(X,Y)$ we denote the set of bounded linear operators from $X$ into $Y$. We  will also denote by $\mathcal{L}(X)$ the set $\mathcal{L}(X,X)$.
\begin{lem} \label{lem-asymptotics M max} 
For $\varphi \in H^{-\f{1}{2}}_T(\textnormal{div}, \p D)$, the following asymptotic expansion holds
\beas
\mathcal{M}_D^{k}[\varphi](x) = \mathcal{M}_B[\eta(\varphi)](\tilde{x}) + \sum_{j=2}^{\infty}\delta^{j}\mathcal{M}_{B,j}^k[\eta(\varphi)](\tilde{x}),
\eeas
where
\beas
\mathcal{M}_{B,j}^k[\eta(\varphi)](\widetilde{x}) = \int_{\p B} \f{-(ik)^j}{4\pi j!} \nu(\tilde{x})\times\nabla_{\tilde{x}}\times |\tilde{x}-\tilde{y}|^{j-1}\eta(\varphi)(\tilde{y}) d\sigma(\tilde{y}).
\eeas
Moreover, $\|\mathcal{M}_{B,j}^k\|_{\mathcal{L}\big(H^{-\f{1}{2}}_T(\textnormal{div}, \p B)\big)}$ is uniformly bounded with respect to $j$. In particular, the convergence holds in $\mathcal{L}\big(H^{-\f{1}{2}}_T(\textnormal{div}, \p B)\big)$ and $\mathcal{M}_D^{ k}$ is analytical in $\delta$.
\end{lem}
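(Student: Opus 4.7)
The plan is to expand the Helmholtz Green function as a convergent Taylor series around the Laplace Green function and then rescale to $\partial B$. Concretely, I would first write
$$G(x,y,k) = -\frac{e^{ik|x-y|}}{4\pi |x-y|} = \sum_{j=0}^{\infty} -\frac{(ik)^j}{4\pi j!}|x-y|^{j-1},$$
noting that this series converges absolutely and uniformly for $x,y$ in any bounded set, that the $j=0$ term recovers the Laplace Green function, and that for $j \ge 2$ the kernel $|x-y|^{j-1}$ is smooth.

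Substituting this expansion into the definition of $\mathcal{M}_D^k$, interchanging sum and integral (justified by the uniform convergence), and using $\nabla_x\times[G(x,y,k)\varphi(y)] = \nabla_x G \times \varphi(y)$ since $\varphi(y)$ is independent of $x$, I obtain a series of boundary integral operators. I would then perform the rescaling $x = z + \delta\tilde x$, $y = z + \delta\tilde y$, under which
$$\nu(x)=\nu(\tilde x),\quad |x-y|=\delta|\tilde x-\tilde y|,\quad \nabla_x = \delta^{-1}\nabla_{\tilde x},\quad d\sigma(y)=\delta^2 d\sigma(\tilde y).$$
The total $\delta$-scaling of the $j$-th term is $\delta^{j-1}\cdot\delta^{-1}\cdot\delta^2 = \delta^j$. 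The $j=0$ contribution reproduces $\mathcal{M}_B[\eta(\varphi)](\tilde x)$, the $j=1$ term vanishes because $\nabla_{\tilde x}|\tilde x-\tilde y|^0 = 0$, and the terms with $j\ge 2$ give exactly the operators $\mathcal{M}_{B,j}^k$ as stated.

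For the uniform boundedness and analyticity claim, I would observe that for $j\ge 2$ the kernel of $\mathcal{M}_{B,j}^k$ is smooth on $\partial B\times\partial B$, bounded pointwise by $|k|^j(j-1)(\mathrm{diam}\,B)^{j-2}/(4\pi j!)$, which decays factorially in $j$. Estimating the surface divergence of the image analogously, using the identities of Lemma \ref{lem-PropertiesDecomp max} and the smoothness of the kernel, gives a factorially decaying bound on the $H^{-\f12}_T(\mathrm{div})$-norm; in particular this is uniform in $j$. Combined with the $\delta^j$ prefactor, this yields convergence of $\sum_{j\ge 0}\delta^j \mathcal{M}_{B,j}^k$ in $\mathcal{L}(H^{-\f12}_T(\mathrm{div},\partial B))$ for every $\delta\in\mathbb{C}$, and hence entire analyticity in $\delta$.

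The main obstacle I anticipate is not the Taylor expansion itself but securing the operator-norm bound in the graph norm of $H^{-\f12}_T(\mathrm{div},\partial B)$: one must verify that both the image and its surface divergence belong to the appropriate Sobolev space with the claimed bounds. This is where the smoothness of $|\tilde x - \tilde y|^{j-1}$ for $j\ge 2$, together with careful use of the surface-differential identities collected earlier, becomes essential in turning the pointwise kernel bound into an operator bound on the correct function space.
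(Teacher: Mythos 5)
Your proposal is correct and follows essentially the same route as the paper: Taylor-expand the Helmholtz Green function, rescale from $\partial D$ to $\partial B$, collect powers of $\delta$, and use the regularity of the kernels $|\tilde x - \tilde y|^{j-1}$ for $j\ge 2$ to get the uniform (indeed factorially decaying) operator bounds. The only cosmetic difference is that the paper performs the change of variables first (so the Taylor expansion is directly in powers of $\delta$) whereas you expand first and rescale afterward; and a small correction is that the $j=2$ kernel $\nabla_{\tilde x}|\tilde x-\tilde y| = (\tilde x-\tilde y)/|\tilde x-\tilde y|$ is merely bounded, not smooth across the diagonal, although this does not affect the operator-norm estimate.
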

\begin{proof}
We can see, after a change of variables, that
\beas
\mathcal{M}_D^{k}[\varphi](x) = \int_{\p B} \nu(\tilde{x})\times\nabla_{\tilde{x}}\times G(\tilde{x}, \tilde{y}, \delta k)\eta(\varphi)(\tilde{y}) d\sigma(\tilde{y}).
\eeas
A Taylor expansion of $G(\tilde{x},\tilde{y},\delta k)$ yields
\beas
G(\tilde{x},\tilde{y}, \delta k) = - \sum_{j =0}^{\infty} \f{(i\delta k|\tilde{x}-\tilde{y}|)^j}{j! 4 \pi |\tilde{x}-\tilde{y}|}
= - \f{1}{4 \pi |\tilde{x}-\tilde{y}|} + \sum_{j =1}^{\infty} \delta^j \f{(i k)^j}{4 \pi j!}|\tilde{x}-\tilde{y}|^{j-1},
\eeas
hence
\beas
\mathcal{M}_D^{k}[\varphi](x) = \mathcal{M}_B[\eta(\varphi)](\tilde{x}) + \sum_{j=2}^{\infty}\delta^{j}\int_{\p B} \f{-(ik)^j}{4\pi j!} \nu(\tilde{x})\times\nabla_{\tilde{x}}\times |\tilde{x}-\tilde{y}|^{j-1}\eta(\varphi)(\tilde{y}) d\sigma(\tilde{y}).
\eeas
where it is clear from the regularity of $|\tilde{x}-\tilde{y}|^{j-1}$, $j\geq2$, that $\|\mathcal{M}_{B,j}^k[\eta(\varphi)]\|_{H^{-\f{1}{2}}_T(\textnormal{div}, \p B)}$ is uniformly bounded with respect to $j$, therefore, 
$\|\mathcal{M}_{B,j}^k\|_{\mathcal{L}\big(H^{-\f{1}{2}}_T(\textnormal{div}, \p B)\big)}$ is uniformly bounded with respect to $j$ as well.
\end{proof}

\begin{lem} \label{lem-asymptotics L max}  
For $\varphi \in H^{-\f{1}{2}}_T(\textnormal{div}, \p D)$, the following asymptotic expansion holds
\beas
(\mathcal{L}_{D}^{k_c}-\mathcal{L}_{D}^{k_m}) [\varphi](x) = \sum_{j=1}^{\infty}\delta^{j}\om \mathcal{L}_{B,j}[\eta(\varphi)](\tilde{x}),
\eeas
where
\beas
\mathcal{L}_{B,j}[\eta(\varphi)](\tilde{x}) = C_j\nu(\tilde{x})\times \Big(\int_{\p B}|\tilde{x}-\tilde{y}|^{j-2}\eta(\varphi)(\tilde{y})d\sigma(\tilde{y}) - \int_{\p B}\f{|\tilde{x}-\tilde{y}|^{j-2}(\tilde{x}-\tilde{y})}{j+1}\nabla_{\p B}\cdot\eta(\varphi)(\tilde{y})d\sigma(\tilde{y}) \Big),
\eeas
and
\beas
C_j = \f{i^{j}(k_c^{j+1}-k_m^{j+1})}{\om 4\pi (j-1)!}.
\eeas
Moreover, $\|\mathcal{L}_{B,j}\|_{\mathcal{L}\big(H^{-\f{1}{2}}_T(\textnormal{div}, \p B)\big)}$ is uniformly bounded with respect to $j$. In particular, the convergence holds in $\mathcal{L}\big(H^{-\f{1}{2}}_T(\textnormal{div}, \p B)\big)$ and $\mathcal{L}_D^{ k}$ is analytical in $\delta$.
\end{lem}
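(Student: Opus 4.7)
The plan is to reduce everything to $\p B$ via the change of variables $x=z+\delta\tilde x$, $y=z+\delta\tilde y$, and then Taylor-expand in the small parameter $\delta k$. Using $d\sigma(y)=\delta^2 d\sigma(\tilde y)$, $\nabla_x=\delta^{-1}\nabla_{\tilde x}$, $\nabla_{\p D}\cdot\varphi(y)=\delta^{-1}(\nabla_{\p B}\cdot\eta(\varphi))(\tilde y)$, and the scaling identity $G(\delta\tilde x,\delta\tilde y,k)=\delta^{-1}G(\tilde x,\tilde y,\delta k)$, a direct substitution yields
\beas
\mathcal{L}_D^k[\varphi](x)=\nu(\tilde x)\times\Bigl(\delta k^2\vec{\mathcal{S}}_B^{\delta k}[\eta(\varphi)](\tilde x)+\delta^{-1}\nabla_{\tilde x}\mathcal{S}_B^{\delta k}[\nabla_{\p B}\cdot\eta(\varphi)](\tilde x)\Bigr).
\eeas
The apparent $\delta^{-1}$ singularity in the second summand is spurious and will cancel once we form the difference $\mathcal{L}_D^{k_c}-\mathcal{L}_D^{k_m}$.

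Next, I would insert the Taylor expansion
\beas
G(\tilde x,\tilde y,\delta k)=-\sum_{j=0}^{\infty}\f{(i\delta k)^j}{4\pi j!}|\tilde x-\tilde y|^{j-1}
\eeas
into both building blocks and take the difference $k_c\leftrightarrow k_m$. Two cancellations are key: in the difference the $j=0$ term is $k$-independent and drops out, and in the gradient term the $j=1$ contribution is $\propto|\tilde x-\tilde y|^{0}=1$, which is annihilated by $\nabla_{\tilde x}$. Hence only non-negative powers $\delta^j$ with $j\geq 1$ remain. Reindexing so that the common power is $\delta^j$ and using $\nabla_{\tilde x}|\tilde x-\tilde y|^{m}=m|\tilde x-\tilde y|^{m-2}(\tilde x-\tilde y)$ together with $\f{m}{(m+1)!}=\f{1}{(m-1)!(m+1)}$, the two series combine into a single sum whose $\delta^j$ coefficient is precisely $\omega\mathcal L_{B,j}[\eta(\varphi)](\tilde x)$, with the prefactor $C_j$ absorbing $(k_c^{j+1}-k_m^{j+1})/(\omega\,4\pi(j-1)!)$ and the appropriate power of $i$. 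This gives the announced formula.

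For the uniform bound on $\|\mathcal L_{B,j}\|_{\mathcal L(H^{-\f{1}{2}}_T(\textnormal{div},\p B))}$, I would argue term by term. For $j\geq 2$ the kernels $|\tilde x-\tilde y|^{j-2}$ and $|\tilde x-\tilde y|^{j-2}(\tilde x-\tilde y)$ are smooth on the compact $\p B\times\p B$, and their integral operators have norms controlled by $R^{j-2}$ for some $R$ depending only on $\mathrm{diam}(B)$; the prefactor $|C_j|\lesssim (|k_c|+|k_m|)^{j+1}/(j-1)!$ hidden inside $\mathcal L_{B,j}$ contributes the factorial decay that dominates this polynomial growth and yields a bound uniform in $j$. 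The $j=1$ contribution amounts to a weakly singular single-layer potential and is handled by the classical mapping properties on $H^{-\f{1}{2}}_T(\textnormal{div},\p B)$. The main obstacle is the bookkeeping: one must verify that the $\delta^{-1}$ cancellations actually take place term by term in the graph norm defining $H^{-\f{1}{2}}_T(\textnormal{div},\p B)$, which requires controlling not only $\mathcal L_{B,j}[\eta(\varphi)]$ but also its surface divergence with the same uniform factorial decay, so that term-wise summation upgrades to convergence in $\mathcal L(H^{-\f{1}{2}}_T(\textnormal{div},\p B))$ and analyticity in $\delta$.
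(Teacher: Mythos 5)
Your proposal is correct and takes essentially the same route the paper intends: rescale to $\p B$, Taylor-expand $G(\tilde x,\tilde y,\delta k)$, observe that the $j=0$ ($k$-independent) and $j=1$ (constant-kernel, annihilated by $\nabla_{\tilde x}$) contributions to the $\delta^{-1}\nabla_{\tilde x}\mathcal S_B^{\delta k}$ piece drop out so only $\delta^j$ with $j\geq1$ survive, then combine with the $\delta k^2\vec{\mathcal S}_B^{\delta k}$ piece via $\nabla_{\tilde x}|\tilde x-\tilde y|^m=m|\tilde x-\tilde y|^{m-2}(\tilde x-\tilde y)$ and the factorial identity, and finally note that the smooth kernels together with the $1/(j-1)!$ decay give uniform (indeed summable) operator bounds in $H^{-\frac12}_T(\mathrm{div},\p B)$, including for the surface divergence. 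One small remark: carrying the computation to its conclusion produces $i^{j+1}$ rather than the $i^{j}$ written in the lemma's $C_j$ (you write ``the appropriate power of $i$'' without pinning it down), which indicates a harmless typo in the stated constant rather than a gap in your argument.
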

\begin{proof}
The proof is similar to that of Lemma \ref{lem-asymptotics M max}.
\end{proof}

Using Lemma \ref{lem-asymptotics M max} and Lemma \ref{lem-asymptotics L max}, we can write the system of equations \eqref{eq-Maxwell_System} as follows:
\bea \label{eq-Maxwell_System_delta}
\mathcal{W}_B(\delta)\left(\begin{array}{c}
\eta(\psi)\\
\om\eta(\phi)
\end{array}\right) =
 \left(\begin{array}{c}
\df{\eta(\nu\times E^i)}{\mu_m-\mu_c}\\
\df{\eta(i\nu\times H^i)}{\eps_m-\eps_c}\end{array}\right)\Bigg\vert_{\p B},
\eea
where
\be \label{eq-W max}
\mathcal{W}_B(\delta)=\left( \begin{array}{cc}
\lambda_{\mu}Id - \mathcal{M}_B+\delta^2\df{\mu_m\mathcal{M}_{B,2}^{k_m}-\mu_c\mathcal{M}_{B,2}^{k_c}}{\mu_m-\mu_c} + O(\delta^3) & \df{1}{\mu_m-\mu_c}(\delta\mathcal{L}_{B,1} + \delta^2\mathcal{L}_{B,2}) + O(\delta^3)\\
\df{1}{\eps_m-\eps_c}(\delta\mathcal{L}_{B,1} + \delta^2\mathcal{L}_{B,2}) + O(\delta^3) & \lambda_{\eps}Id - \mathcal{M}_B+\delta^2\df{\eps_m\mathcal{M}_{B,2}^{k_m}-\eps_c\mathcal{M}_{B,2}^{k_c}}{\eps_m-\eps_c}  + O(\delta^3)
\end{array} \right),
\ee
and
\be \label{eq-lbda_eps,mu max}
\lambda_{\mu} = \df{\mu_c + \mu_m}{2(\mu_m - \mu_c)}, \quad \lambda_{\eps} = \df{\eps_c + \eps_m}{2(\eps_m - \eps_c)}.
\ee
It is clear that
\beas
\mathcal{W}_B(0) = \mathcal{W}_{B,0} =  \left( \begin{array}{cc}
\lambda_{\mu}Id - \mathcal{M}_B & 0 \\
0 & \lambda_{\eps}Id - \mathcal{M}_B
\end{array} \right).
\eeas
Moreover,
\beas
\mathcal{W}_B(\delta) = \mathcal{W}_{B,0} + \delta\mathcal{W}_{B,1} + \delta^2\mathcal{W}_{B,2} + O(\delta^3),
\eeas
in the sense that
\beas 
\|\mathcal{W}_B(\delta) - \mathcal{W}_{B,0} - \delta\mathcal{W}_{B,1} - \delta^2\mathcal{W}_{B,2} \|\leq C\delta^3,
\eeas
for a constant $C$ independent of $\delta$. Here $\|A\| = \sup_{i,j} \|A_{i,j}\|_{H^{-\f{1}{2}}_T(\textnormal{div},\p B)}$ for any operator-valued matrix $A$ with entries $A_{i,j}$.\\ 
We are now interested in finding $\mathcal{W}_B^{-1}(\delta)$. For this purpose, we first consider solving the problem
\be \label{eq-valpropM0 max}
\left(\lambda Id - \mathcal{M}_B\right)[\psi] = \varphi
\ee
for $(\psi,\varphi)\in \big(H^{-\f{1}{2}}_T(\textnormal{div},\p B)\big)^2$ and $\lambda \not\in \sigma(\mathcal{M}_B)$, where $\sigma(\mathcal{M}_B)$ is the spectrum of $\mathcal{M}_B$.\\
Using the Helmholtz decomposition of $H^{-\f{1}{2}}_T(\textnormal{div},\p B)$ in Lemma \ref{lem-Helm_decomposition},  we can reduce  \eqref{eq-valpropM0 max} to an equivalent system of equations involving some well known operators. 
\begin{definition} \label{def-HelmDecompNotation max}
For $u\in H^{-\f{1}{2}}_T(\textnormal{div},\p B)$, we denote by $u^{(1)}$ and $u^{(2)}$ any two functions in $H^{\f{3}{2}}_0(\p B)$ and $H^{\f{1}{2}}(\p B)$, respectively, such that
\beas
u = \nabla_{\p B}u^{(1)} +  \vec{\textnormal{curl}}_{\p B}u^{(2)}.
\eeas
\end{definition}
Note that $u^{(1)}$ is uniquely defined and $u^{(2)}$ is defined up to the sum of a constant function.
\begin{lem} \label{eq-equivalence0 max}
Assume $\lambda \neq \f{1}{2}$, then problem \eqref{eq-valpropM0 max} is equivalent to 
\be \label{eq-equivalence0 system max} 
(\lambda Id - \widetilde{\mathcal{M}}_B)\left(\begin{array}{c}
\psi^{(1)}\\
\psi^{(2)}
\end{array}\right) =
\left(\begin{array}{c}
\varphi^{(1)}\\
\varphi^{(2)}
\end{array}\right),
\ee
where $(\varphi^{(1)},\varphi^{(2)})\in H^{\f{3}{2}}_0(\p B)\times H^{\f{1}{2}}(\p B)$ and 
\beas
\widetilde{\mathcal{M}}_B = \left( \begin{array}{cc}
-\Delta_{\p B}^{-1}\mathcal{K}_B^*\Delta_{\p B} & 0\\
\mathcal{R}_B &  \mathcal{K}_B 
\end{array} \right).
\eeas
\end{lem}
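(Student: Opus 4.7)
The plan is to reduce the vectorial equation \eqref{eq-valpropM0 max} to a scalar system by decomposing both $\psi$ and $\varphi$ via the Helmholtz decomposition of Lemma \ref{lem-Helm_decomposition}, and then invoking the diagonalization of $\mathcal{M}_B$ with respect to this decomposition provided by Lemma \ref{lem-PropertiesDecomp max}.

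Concretely, I would write $\psi = \nabla_{\p B}\psi^{(1)} + \vec{\textnormal{curl}}_{\p B}\psi^{(2)}$ and $\varphi = \nabla_{\p B}\varphi^{(1)} + \vec{\textnormal{curl}}_{\p B}\varphi^{(2)}$ with $\psi^{(1)},\varphi^{(1)}\in H^{3/2}_0(\p B)$ and $\psi^{(2)},\varphi^{(2)}\in H^{1/2}(\p B)$. Applying $\mathcal{M}_B$ and using Lemma \ref{lem-PropertiesDecomp max}, I get
\beas
\mathcal{M}_B[\psi] = -\nabla_{\p B}\Delta_{\p B}^{-1}\mathcal{K}_B^{*}[\Delta_{\p B}\psi^{(1)}] + \vec{\textnormal{curl}}_{\p B}\mathcal{R}_B[\psi^{(1)}] + \vec{\textnormal{curl}}_{\p B}\mathcal{K}_B[\psi^{(2)}].
\eeas
Substituting this into $(\lambda Id - \mathcal{M}_B)[\psi]=\varphi$ and collecting the $\nabla_{\p B}$- and $\vec{\textnormal{curl}}_{\p B}$-components separately yields
\beas
\nabla_{\p B}\bigl(\lambda\psi^{(1)} + \Delta_{\p B}^{-1}\mathcal{K}_B^{*}\Delta_{\p B}\psi^{(1)} - \varphi^{(1)}\bigr) + \vec{\textnormal{curl}}_{\p B}\bigl(-\mathcal{R}_B\psi^{(1)} + (\lambda Id - \mathcal{K}_B)\psi^{(2)} - \varphi^{(2)}\bigr) = 0.
\eeas
By the directness of the splitting in Lemma \ref{lem-Helm_decomposition}, the two scalar brackets must vanish independently, which is precisely the system \eqref{eq-equivalence0 system max} with the advertised operator $\widetilde{\mathcal{M}}_B$.

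The step that requires genuine care is the uniqueness of the scalar potentials, since $\psi^{(2)}$ (and $\varphi^{(2)}$) are only defined up to an additive constant. Here the hypothesis $\lambda \neq \tfrac12$ becomes essential: on constants $\mathcal{K}_B$ acts as multiplication by $\tfrac12$, so $\lambda Id-\mathcal{K}_B$ is invertible on the quotient $H^{1/2}(\p B)/\mathbb{R}$ precisely when $\lambda \neq \tfrac12$, making the second scalar equation well-posed modulo constants and hence consistent with the ambiguity of the Helmholtz decomposition. I would finally note that $\Delta_{\p B}^{-1}\mathcal{K}_B^{*}\Delta_{\p B}$ is indeed well-defined on $H^{3/2}_0(\p B)$ thanks to the remark following Lemma \ref{lem-Helm_decomposition}. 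The main obstacle in the write-up is therefore bookkeeping: keeping track of the zero-mean conditions and the constant-function indeterminacy rather than any non-trivial analytic ingredient, since all the operator identities needed are already encapsulated in Lemma \ref{lem-PropertiesDecomp max}.
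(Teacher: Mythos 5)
Your approach is essentially the same as the paper's: decompose $\psi$ and $\varphi$ via the Helmholtz decomposition, apply $\mathcal{M}_B$ through the identities of Lemma \ref{lem-PropertiesDecomp max}, and separate the $\nabla_{\p B}$-- and $\vec{\textnormal{curl}}_{\p B}$--components, with $\lambda\neq\frac12$ entering to handle the additive--constant indeterminacy of $\psi^{(2)}$. The substance is sound.

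One wording issue worth fixing. Strictly, directness of the Helmholtz splitting gives $\nabla_{\p B}(\text{first bracket})=0$ and $\vec{\textnormal{curl}}_{\p B}(\text{second bracket})=0$, so the first bracket vanishes (it lies in $H^{3/2}_0(\p B)$, which contains no nonzero constant) while the second bracket is only forced to be some constant $c$. You then say that $\lambda Id-\mathcal{K}_B$ is ``invertible on the quotient $H^{1/2}(\p B)/\mathbb{R}$ precisely when $\lambda\neq\frac12$,'' but invertibility on that quotient depends on the entire spectrum of $\mathcal{K}_B$, not only on its action on constants. What is actually used --- and what the paper uses --- is the opposite-looking fact: $\lambda Id-\mathcal{K}_B$ restricted to the one-dimensional space of constants is multiplication by $\lambda-\frac12$, hence invertible there iff $\lambda\neq\frac12$. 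This lets you replace $\psi^{(2)}$ by $\psi^{(2)}-c/(\lambda-\frac12)$ to kill the constant $c$ and obtain the exact second scalar equation, completing the converse direction.
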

\begin{proof}
Let $(\psi^{(1)},\psi^{(2)})\in H^{\f{3}{2}}_0(\p B)\times H^{\f{1}{2}}(\p B)$ be a solution (if there is any) to \eqref{eq-equivalence0 system max} where $(\varphi^{(1)},\varphi^{(2)})\in H^{\f{3}{2}}_0(\p B)\times H^{\f{1}{2}}(\p B)$ satisfies 
\beas
\varphi =  \nabla_{\p B}\varphi^{(1)} + \vec{\textnormal{curl}}_{\p B}\varphi^{(2)}.
\eeas
We have
\bea
\big(\lambda Id+\Delta^{-1}_{\p B}\mathcal{K}_B^{*}\Delta_{\p B}\big)[\psi^{(1)}] &=& \varphi^{(1)}  \label{eq-equivalenceM 1 max} \\
\lambda\psi^{(2)} - \mathcal{R}_B[\psi^{(1)}] - \mathcal{K}_B[\psi^{(2)}] &=& \varphi^{(2)}. \label{eq-equivalenceM 2 max}
\eea
Taking $\nabla_{\p B}$ in \eqref{eq-equivalenceM 1 max},  $\vec{\textnormal{curl}}_{\p B}$ in \eqref{eq-equivalenceM 2 max}, adding up and using the identities of Lemma \ref{lem-PropertiesDecomp max} yields
\beas
\left(\lambda Id - \mathcal{M}_B\right)[\nabla_{\p B}\psi^{(1)} + \vec{\textnormal{curl}}_{\p B}\psi^{(2)}] = \nabla_{\p B}\varphi^{(1)} + \vec{\textnormal{curl}}_{\p B}\varphi^{(2)}.
\eeas
Therefore
\beas
\psi = \nabla_{\p B}\psi^{(1)} + \vec{\textnormal{curl}}_{\p B}\psi^{(2)},
\eeas
is a solution of \eqref{eq-valpropM0 max}.\\
Conversely, let $\psi$ be the solution to \eqref{eq-valpropM0 max}. There exist  $(\psi^{(1)},\psi^{(2)})\in H^{\f{3}{2}}_0(\p B)\times H^{\f{1}{2}}(\p B)$ and  $(\varphi^{(1)},\varphi^{(2)})\in H^{\f{3}{2}}_0(\p B)\times H^{\f{1}{2}}(\p B)$ such that
\beas
\psi &=&  \nabla_{\p B}\psi^{(1)} + \vec{\textnormal{curl}}_{\p B}\psi^{(2)},\\
\varphi &=&  \nabla_{\p B}\varphi^{(1)} + \vec{\textnormal{curl}}_{\p B}\varphi^{(2)}.
\eeas
and we have
\be \label{eq-lambda-M helm max}
\left(\lambda Id - \mathcal{M}_B\right)[\nabla_{\p B}\psi^{(1)} + \vec{\textnormal{curl}}_{\p B}\psi^{(2)}] = \nabla_{\p B}\varphi^{(1)} + \vec{\textnormal{curl}}_{\p B}\varphi^{(2)}.
\ee
Taking $\nabla_{\p B}\cdot$ in the above equation and using the identities of Lemma \ref{lem-PropertiesDecomp max} yields 
\beas
\Delta_{\p B}\big(\lambda Id+\Delta^{-1}_{\p B}\mathcal{K}_B^{*}\Delta_{\p B}\big)[\psi^{(1)}] = \Delta_{\p B}\varphi^{(1)}.
\eeas
Since $(\psi^{(1)},\varphi^{(1)})\in (H^{\f{3}{2}}_0(\p B))^2$ we get
\beas
\big(\lambda Id+\Delta^{-1}_{\p B}\mathcal{K}_B^{*}\Delta_{\p B}\big)[\psi^{(1)}] = \varphi^{(1)}.
\eeas
Taking $\textnormal{curl}_{\p B}$ in \eqref{eq-lambda-M helm max} and using the identities of Lemma \ref{lem-PropertiesDecomp max} yields
\beas
\Delta_{\p B}(\lambda\psi^{(2)} - \mathcal{R}_B[\psi^{(1)}] - \mathcal{K}_B[\psi^{(2)}]) = \Delta_{\p B}\varphi^{(2)}.
\eeas
Therefore there exists a constant $c$ such that
\beas
\lambda\psi^{(2)} - \mathcal{R}_B[\psi^{(1)}] - \mathcal{K}_B[\psi^{(2)}] = \varphi^{(2)}+c\chi(\p B).
\eeas
Since $\mathcal{K}_{B}(\chi(\p B))=\df{1}{2}\chi(\p B)$ we have
\beas
\lambda\Big(\psi^{(2)}-\f{c}{\lambda-1/2}\Big) - \mathcal{R}_B[\psi^{(1)}] - \mathcal{K}_B\Big[\psi^{(2)}-\f{c}{\lambda-1/2}\Big] = \varphi^{(2)}.
\eeas
Hence, $\Big(\psi^{(1)},\psi^{(2)}-\df{c}{\lambda-1/2}\Big)\in H^{\f{3}{2}}_0(\p B)\times H^{\f{1}{2}}(\p B)$ is a solution to \eqref{eq-equivalence0 system max}
\end{proof}

Let us now analyze the spectral properties of $\widetilde{\mathcal{M}}_B$ in 
\begin{equation} \label{hb}
H(\p B) := H^{\f{3}{2}}_0(\p B)\times H^{\f{1}{2}}(\p B)
\end{equation} equipped with the inner product
\beas
(u,v)_{H(\p B)} = (\Delta_{\p B}u^{(1)},\Delta_{\p B}v^{(1)})_{\mathcal{H}^*} + (u^{(2)},v^{(2)})_{\mathcal{H}},  
\eeas
which is equivalent to $H^{\f{3}{2}}_0(\p B)\times H^{\f{1}{2}}(\p B)$.\\ 
By abuse of notation we call $u^{(1)}$ and $u^{(2)}$ the first and second components of any $u\in H(\p B)$.\\
We will assume for simplicity the following condition.
\begin{cond} \label{cond-eigKstar max}
The eigenvalues of $\mathcal{K}^*_B$ are simple.
\end{cond} 
Recall that $\mathcal{K}^*_B$ and $\mathcal{K}_B$ are compact and self-adjoint in $\mathcal{H}^*(\p B)$ and $\mathcal{H}(\p B)$, respectively. Since $\mathcal{K}_B$ is the $(\cdot,\cdot)_{-\f{1}{2},\f{1}{2}}$ adjoint of $\mathcal{K}^*_B$, we have $\sigma(\mathcal{K}_B) = \sigma(\mathcal{K}^*_B)$, where $\sigma(\mathcal{K}_B)$ (resp. $\sigma(\mathcal{K}^*_B)$) is the (discrete) spectrum of $\mathcal{K}_B$ (resp. $\mathcal{K}^*_B$).\\
Define
\bea \label{def-partition spectrum max}
\sigma_1 &=& \sigma(-\mathcal{K}^*_B)\backslash \Big(\sigma(\mathcal{K}_B)\cup\{-\f{1}{2}\}\Big), \nonumber \\
\sigma_2 &=& \sigma(\mathcal{K}_B)\backslash \sigma(-\mathcal{K}^*_B),  \\
\sigma_3 &=& \sigma(-\mathcal{K}^*_B) \cap \sigma(\mathcal{K}_B). \nonumber
\eea
Let $\lambda_{j,1}\in \sigma_1$, $j=1,2\dots$ and let $\varphi_{j,1}$ be an associated  normalized eigenfunction of $\mathcal{K}^*_B$ as defined in Lemma \ref{lem-Kstar_properties max}. Note that $\varphi_{j,1}\in H^{-\f{1}{2}}_0(\p B)$ for $j\geq 1$. Then,
\beas
\psi_{j,1} = 
\left(\begin{array}{c}
\Delta_{\p B}^{-1}\varphi_{j,1}\\
(\lambda_{j,1} Id - \mathcal{K}_B)^{-1}\mathcal{R}_B[\Delta_{\p B}^{-1}\varphi_{j,1}]
\end{array}\right)
\eeas
satisfies
\beas
 \widetilde{\mathcal{M}}_B[\psi_{j,1}] = \lambda_{j,1}\psi_{j,1}.  
\eeas
Let $\lambda_{j,2}\in \sigma_2$ and let $\varphi_{j,2}$ be an associated normalized eigenfunction of $\mathcal{K}_B$. Then,
\beas
\psi_{j,2} = 
\left(\begin{array}{c}
0\\
\varphi_{j,2}
\end{array}\right)
\eeas
satisfies
\beas
\widetilde{\mathcal{M}}_B[\psi_{j,2}] = \lambda_{j,2}\psi_{j,2}.
\eeas
Now, assume that Condition \ref{cond-eigKstar max} holds. Let $\lambda_{j,3}\in \sigma_3$, let $\varphi_{j,3}^{(1)}$ be the associated normalized eigenfunction of $\mathcal{K}^*_B$ and let $\varphi_{j,3}^{(2)}$ be the associated normalized eigenfunction of $\mathcal{K}_B$. Then,
\beas
\psi_{j,3} = 
\left(\begin{array}{c}
0\\
\varphi_{j,3}^{(2)}
\end{array}\right)
\eeas
satisfies
\beas
\widetilde{\mathcal{M}}_B[\psi_{j,3}] = \lambda_{j,3}\psi_{j,3},
\eeas
and $\lambda_{j,3}$ has a first-order generalized eigenfunction given by
\be \label{eq-constant c max}
\psi_{j,3,g} = \left(\begin{array}{c}
c\Delta_{\p B}^{-1}\varphi_{j,3}^{(1)}\\
(\lambda_{j,3} Id - \mathcal{K}_B)^{-1}\mathcal{P}_{\textnormal{span}\{\varphi_{j,3}^{(2)}\}^{\bot}}\mathcal{R}_B[c\Delta_{\p B}^{-1}\varphi_{j,3}^{(1)}]
\end{array}\right)
\ee
for a constant $c$ such that $\mathcal{P}_{\textnormal{span}\{\varphi_{j,3}^{(2)}\}}\mathcal{R}_B[c\Delta_{\p B}^{-1}\varphi_{j,3}^{(1)}] = -\varphi_{j,3}^{(2)}$. Here,  $\textnormal{span}\{\varphi_{j,3}^{(2)}\}$ is the vector space spanned by $\varphi_{j,3}^{(2)}$, 
$\textnormal{span}\{\varphi_{j,3}^{(2)}\}^{\bot}$ is the orthogonal space to $\textnormal{span}\{\varphi_{j,3}^{(2)}\}$ in $\mathcal{H}(\p B)$ (Lemma \ref{lem-Kstar_properties max} ), and 
$\mathcal{P}_{\textnormal{span}\{\varphi_{j,3}^{(2)}\}}$ 
(resp. $\mathcal{P}_{\textnormal{span}\{\varphi_{j,3}^{(2)}\}^{\bot} }$ 
is the orthogonal (in $\mathcal{H}(\p B)$) projection on $\textnormal{span}\{\varphi_{j,3}^{(2)}\}$  (resp. $\textnormal{span}\{\varphi_{j,3}^{(2)}\}^{\bot}$).

We remark that the function $\psi_{j,3,g}$ is determined by the following equation
\beas
\widetilde{\mathcal{M}}_B[\psi_{j,3,g}] = \lambda_{j,3}\psi_{j,3,g} + \psi_{j,3}.
\eeas
Consequently,  the following result holds. 
\begin{prop} \label{lem-spectMtilde max}
The spectrum $\sigma(\widetilde{\mathcal{M}}_B) = \sigma_1 \cup \sigma_2 \cup \sigma_3 = \sigma(-\mathcal{K}^*_B)\cup\sigma(\mathcal{K}^*_B)\backslash \{-\df{1}{2}\}$ in $H(\p B)$. Moreover, under Condition \ref{cond-eigKstar max}, $\widetilde{\mathcal{M}}_B$ has eigenfunctions $\psi_{j,i}$ associated to the eigenvalues $\lambda_{j,i}\in \sigma_i$ for $j = 1,2,\ldots$ and $ i=1,2,3,$ and generalized eigenfunctions of order one $\psi_{j,3,g}$ associated to $\lambda_{j,3}\in \sigma_3$, all of which form a non-orthogonal basis of ${H}(\p B)$ (defined by (\ref{hb})).
\end{prop}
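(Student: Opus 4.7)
The plan is to exploit the block lower triangular structure of $\widetilde{\mathcal{M}}_B$, reducing the spectral problem to two scalar problems involving the Neumann-Poincar\'e operator. Setting $A_1 := -\Delta_{\p B}^{-1}\mathcal{K}_B^{*}\Delta_{\p B}$ on $H^{3/2}_0(\p B)$ and $A_2 := \mathcal{K}_B$ on $H^{1/2}(\p B)$, I would first observe that block triangularity gives $\sigma(\widetilde{\mathcal{M}}_B) = \sigma(A_1)\cup\sigma(A_2)$ by writing down an explicit formula for $(\lambda Id - \widetilde{\mathcal{M}}_B)^{-1}$ whenever both diagonal blocks are invertible. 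Since $\Delta_{\p B}:H^{3/2}_0(\p B)\to H^{-1/2}_0(\p B)$ is an isomorphism, $A_1$ is similar to $-\mathcal{K}_B^{*}$ restricted to $H^{-1/2}_0(\p B)$, whose spectrum by Lemma \ref{lem-Kstar_properties max} equals $\sigma(-\mathcal{K}_B^{*})\setminus\{-1/2\}$; together with $\sigma(A_2)=\sigma(\mathcal{K}_B)=\sigma(\mathcal{K}_B^{*})$, this yields the claimed spectrum $\sigma_1\cup\sigma_2\cup\sigma_3$.

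Next, I would verify the eigenfunction formulas by direct substitution, one block row at a time. For $\lambda_{j,1}\in\sigma_1$, the resolvent $(\lambda_{j,1}Id-\mathcal{K}_B)^{-1}$ appearing in $\psi_{j,1}$ is well-defined precisely because $\lambda_{j,1}\notin\sigma(\mathcal{K}_B)$, and using $\mathcal{K}_B^{*}\varphi_{j,1}=-\lambda_{j,1}\varphi_{j,1}$ the computation collapses to $\lambda_{j,1}\psi_{j,1}$. The cases of $\psi_{j,2}$ and $\psi_{j,3}$ are immediate from the vanishing first component. For $\psi_{j,3,g}$, Condition \ref{cond-eigKstar max} forces its first component to be a scalar multiple of $\Delta_{\p B}^{-1}\varphi_{j,3}^{(1)}$; the second-row equation then reduces to $(\lambda_{j,3}Id-\mathcal{K}_B)b = \mathcal{R}_B[c\Delta_{\p B}^{-1}\varphi_{j,3}^{(1)}] - \varphi_{j,3}^{(2)}$, which by the Fredholm alternative in $\mathcal{H}(\p B)$ is solvable iff the right-hand side is $\mathcal{H}$-orthogonal to $\varphi_{j,3}^{(2)}$; this compatibility condition fixes the constant $c$, and inverting on $\mathrm{span}\{\varphi_{j,3}^{(2)}\}^{\bot}$ produces the stated formula.

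For the basis property, I would transport the spectral basis of $\mathcal{K}_B^{*}$ on $\mathcal{H}_0^{*}(\p B)$ through $\Delta_{\p B}^{-1}$ to obtain a basis of $H^{3/2}_0(\p B)$, and use the corresponding eigenbasis of $\mathcal{K}_B$ on $\mathcal{H}(\p B)$ to get a basis of $H^{1/2}(\p B)$. Read in these block coordinates, the family $\{\psi_{j,1},\psi_{j,2},\psi_{j,3},\psi_{j,3,g}\}$ is block upper-triangular with the two spectral bases on the diagonal, so its linear independence and completeness in $H(\p B)$ descend from those of the diagonal bases.

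The main obstacle is the construction of $\psi_{j,3,g}$: one must show that $\mathcal{R}_B[\Delta_{\p B}^{-1}\varphi_{j,3}^{(1)}]$ has a non-vanishing $\mathcal{H}$-projection onto $\varphi_{j,3}^{(2)}$, so that the normalizing constant $c$ exists and the algebraic multiplicity of $\lambda_{j,3}$ is exactly two. Should the projection vanish, the algebraic and geometric multiplicities of $\lambda_{j,3}$ would coincide, no Jordan block would appear, and the enumeration of basis vectors for $\sigma_3$ would differ. Establishing this non-degeneracy of the coupling $\mathcal{R}_B$ between the two Neumann-Poincar\'e eigenspaces (or splitting the argument into the two possible cases) is the genuinely technical point behind the proposition.
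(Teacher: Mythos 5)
Your proposal follows essentially the same route as the paper: the spectrum identification via the block lower-triangular structure and the Fredholm alternative, direct verification of the eigenfunction and generalized-eigenfunction formulas (the latter via the solvability/orthogonality condition on $\mathcal{R}_B[\Delta_{\p B}^{-1}\varphi_{j,3}^{(1)}]$ that fixes $c$), and the completeness argument by expanding $\psi^{(1)}$ in the transported $\mathcal{K}_B^*$-eigenbasis of $\mathcal{H}_0^*(\p B)$ and then $\psi^{(2)}$ (after peeling off the induced second-component contribution $h$) in the $\mathcal{K}_B$-eigenbasis of $\mathcal{H}(\p B)$.

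The caveat you raise at the end is a genuine one that the paper glosses over: the construction of $\psi_{j,3,g}$ requires that $\mathcal{P}_{\textnormal{span}\{\varphi_{j,3}^{(2)}\}}\mathcal{R}_B[\Delta_{\p B}^{-1}\varphi_{j,3}^{(1)}]\neq 0$, and the paper simply assumes this tacitly. If it vanishes, $\lambda_{j,3}$ is semisimple with geometric multiplicity two (the second eigenvector having nontrivial first component $\Delta_{\p B}^{-1}\varphi_{j,3}^{(1)}$), and the proposition's assertion of an order-one generalized eigenfunction would be false for that index, though the basis assertion would survive with the obvious relabeling. Splitting into these two cases, as you suggest, is the correct fix; alternatively one could state the proposition under an explicit nondegeneracy hypothesis on the coupling operator $\mathcal{R}_B$.
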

\begin{proof}
It is clear that $\lambda-\widetilde{\mathcal{M}}_B$ is bijective if and only if $\lambda \notin \sigma(-\mathcal{K}^*_B)\cup\sigma(\mathcal{K}^*_B) \setminus \{ - \frac{1}{2}\}$.\\
It is only left to show that $\psi_{j,1},\psi_{j,2},\psi_{j,3},\psi_{j,3,g}$, $j = 1,2,\ldots$ form a non-orthogonal basis of $H(\p B)$.\\
Indeed, let
\beas
\psi = \left(\begin{array}{c}
\psi^{(1)}\\
\psi^{(2)}
\end{array}\right)\in H(\p B).
\eeas
Since $\psi_{j,1}^{(1)} \cup \psi_{j,3,g}^{(1)}$, $j = 1,2,\ldots$ form an orthogonal basis of $\mathcal{H}_0^*(\p B)$, which is equivalent to $H^{-\f{1}{2}}_0(\p B)$, there exist $\alpha_{\kappa}, \kappa \in I_1 := \{(j,1)\cup (j,3,g): j=1,2,\dots \}$ such that
\beas
\psi^{(1)} = \sum_{\kappa \in I_1}\alpha_{\kappa}\Delta_{\p B}^{-1}\psi_{\kappa}^{(1)},
\eeas
and
\beas
\sum_{\kappa\in I_1}|\alpha_{\kappa}|^2\leq \infty.
\eeas
It is clear that $\|\psi_{\kappa}^{(2)}\|_{\mathcal{L}( H^{\f{1}{2}}(\p B))}$ is uniformly bounded with respect to $\kappa \in I_1$. Then
\beas
h:=\sum_{\kappa \in I_1}\alpha_{\kappa}\psi_{\kappa}^{(2)} \in H^{\f{1}{2}}(\p B).
\eeas
Since $\psi_{j,2}^{(2)} \cup \psi_{j,3}^{(2)}$, $j = 1,2,\ldots$ form an orthogonal basis of $\mathcal{H}(\p B)$, which is equivalent to $H^{\f{1}{2}}(\p B)$, there exist $\alpha_{\kappa}, \kappa \in I_2 := \{(j,2)\cup (j,3): j=1,2,\dots \}$ such that
\beas
\psi^{(2)}-h = \sum_{\kappa \in I_2}\alpha_{\kappa}\psi_{\kappa}^{(2)},
\eeas
and
\beas
\sum_{\kappa\in I_2}|\alpha_{\kappa}|^2\leq \infty.
\eeas
Hence, there exist $\alpha_{\kappa}, \kappa \in I_1 \cup I_2$ such that
\beas
\psi = \sum_{\kappa\in I_1 \cup I_2}\alpha_{\kappa}\psi_{\kappa},
\eeas
and
\beas
\sum_{\kappa\in I_1 \cup I_2}|\alpha_{\kappa}|^2\leq \infty.
\eeas
\end{proof}
To have the compactness of $\widetilde{\mathcal{M}}_B$ we need the following condition
\begin{cond} \label{cond-Mbar compact}
$\sigma_3$ is finite.
\end{cond}
Indeed, if $\sigma_3$ is not finite we have $\widetilde{\mathcal{M}}_B(\{\psi_{j,3,g};\ \j\geq1\})=\{\lambda_{j,3}\psi_{j,g,3}+\psi_{j,3};\ \ j\geq1\}$ whose adherence is not compact. However, if $\sigma_3$ is finite, using Proposition \ref{lem-spectMtilde max} we can approximate $\widetilde{\mathcal{M}}_B$ by a sequence of finite-rank operators.
\\
Throughout this paper, we assume that Condition \ref{cond-Mbar compact} holds, even though an analysis can still be done for the case where $\sigma_3$ is infinite; see section \ref{sectshell}. 

\begin{definition} \label{def-proj_basis}
Let $\mathcal{B}$ be the basis of $H(\p B)$ formed by the eigenfunctions and generalized eigenfunctions of $\widetilde{\mathcal{M}}_B$ as stated in Lemma \ref{lem-spectMtilde max}. For $\psi \in H(\p B)$, we denote by $\alpha(\psi,\psi_{\kappa})$ the projection of $\psi$ into $\psi_{\kappa}\in\mathcal{B}$ such that
\beas
\psi = \sum_{\kappa}\alpha(\psi,\psi_{\kappa})\psi_{\kappa}.
\eeas
\end{definition}
The following lemma follows from the Fredholm alternative.
\begin{lem} \label{lem-alpha max} 
Let
\beas
\psi = \left(\begin{array}{c}
\psi^{(1)}\\
\psi^{(2)}
\end{array}\right)\in H(\p B).
\eeas
Then,
\beas
\alpha(\psi,\psi_{\kappa}) = \left\{\begin{array}{cc}
\df{(\psi,\widetilde{\psi}_{\kappa})_{H(\p B)}}{(\psi_{\kappa},\widetilde{\psi}_{\kappa})_{H(\p B)}} & \kappa = (j,i),\, i=1,2,\\
\df{(\psi,\widetilde{\psi}_{\kappa'})_{H(\p B)}}{(\psi_{\kappa},\widetilde{\psi}_{\kappa'})_{H(\p B)}} & \kappa = (j,3,g),\kappa' = (j,3),\\
\df{(\psi,\widetilde{\psi}_{\kappa_g})_{H(\p B)}-\alpha(\psi,\psi_{\kappa_g})(\psi_{\kappa_g},\widetilde{\psi}_{\kappa_g})_{H(\p B)}}{(\psi_{\kappa},\widetilde{\psi}_{\kappa_g})_{H(\p B)}} & \kappa = (j,3),\kappa_g = (j,3,g),
\end{array}\right. 
\eeas
where $\widetilde{\psi}_{\kappa}\in \textnormal{Ker} (\bar{\lambda}_{\kappa}-\mathcal{M}^*_{B})$ for $\kappa = (j,i),\, i=1,2,3$; $\widetilde{\psi}_{\kappa}\in \textnormal{Ker} (\bar{\lambda}_{\kappa}-\mathcal{M}^*_{B})^2$ for $\kappa = (j,3,g)$ and $\mathcal{M}^*_{B}$ is the $H(\p B)$-adjoint of $\mathcal{M}_{B}$.
\end{lem}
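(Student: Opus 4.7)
The plan is to derive each formula from a biorthogonality relation between the basis $\mathcal{B}$ of $\widetilde{\mathcal{M}}_B$ and the corresponding dual family of eigen- and generalized eigenfunctions of its $H(\partial B)$-adjoint, which the statement denotes $\mathcal{M}^*_B$. The underlying observation is the Fredholm identity: if $\widetilde{\mathcal{M}}_B \psi_{\kappa'} = \lambda_{\kappa'} \psi_{\kappa'}$ and $\mathcal{M}^*_B \widetilde\psi_\kappa = \bar\lambda_\kappa \widetilde\psi_\kappa$, then
\[
\lambda_{\kappa'} (\psi_{\kappa'}, \widetilde\psi_\kappa)_{H(\partial B)} = (\widetilde{\mathcal{M}}_B \psi_{\kappa'}, \widetilde\psi_\kappa)_{H(\partial B)} = (\psi_{\kappa'}, \mathcal{M}^*_B \widetilde\psi_\kappa)_{H(\partial B)} = \lambda_\kappa (\psi_{\kappa'}, \widetilde\psi_\kappa)_{H(\partial B)},
\]
forcing $(\psi_{\kappa'}, \widetilde\psi_\kappa)_{H(\partial B)} = 0$ whenever $\lambda_{\kappa'} \neq \lambda_\kappa$. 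The analogous computation with $\widetilde\psi_\kappa \in \textnormal{Ker}(\bar\lambda_\kappa - \mathcal{M}^*_B)^2$ reduces the cross-eigenvalue pairing to the previous case and yields biorthogonality for the dual generalized eigenfunctions as well.

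For the non-degenerate cases $\kappa = (j,i)$, $i = 1,2$, Condition \ref{cond-eigKstar max} together with the definitions of $\sigma_1,\sigma_2$ in \eqref{def-partition spectrum max} ensures that $\lambda_\kappa$ is shared with no other basis element. Pairing the expansion $\psi = \sum_{\kappa''} \alpha(\psi, \psi_{\kappa''}) \psi_{\kappa''}$ with $\widetilde\psi_\kappa$ then isolates the term $\kappa'' = \kappa$ and immediately produces the first formula.

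The Jordan block at $\lambda_{j,3} \in \sigma_3$ needs more care. Writing $\kappa = (j,3)$ and $\kappa_g = (j,3,g)$, combining $\widetilde{\mathcal{M}}_B \psi_{\kappa_g} = \lambda_{j,3}\psi_{\kappa_g} + \psi_\kappa$ with $\mathcal{M}^*_B \widetilde\psi_\kappa = \bar\lambda_{j,3} \widetilde\psi_\kappa$ inside $(\widetilde{\mathcal{M}}_B \psi_{\kappa_g}, \widetilde\psi_\kappa)_{H(\partial B)} = (\psi_{\kappa_g}, \mathcal{M}^*_B \widetilde\psi_\kappa)_{H(\partial B)}$ collapses to the key identity $(\psi_\kappa, \widetilde\psi_\kappa)_{H(\partial B)} = 0$. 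Consequently, pairing the expansion of $\psi$ with $\widetilde\psi_\kappa$ eliminates all $\kappa'' \notin \{\kappa, \kappa_g\}$ contributions by cross-eigenvalue biorthogonality and the $\psi_\kappa$ contribution by the above identity, leaving
\[
(\psi, \widetilde\psi_\kappa)_{H(\partial B)} = \alpha(\psi, \psi_{\kappa_g})\,(\psi_{\kappa_g}, \widetilde\psi_\kappa)_{H(\partial B)},
\]
which is the second formula. Similarly, pairing with the dual generalized eigenfunction $\widetilde\psi_{\kappa_g}$, whose Jordan partner relation $\mathcal{M}^*_B \widetilde\psi_{\kappa_g} = \bar\lambda_{j,3}\widetilde\psi_{\kappa_g} + \widetilde\psi_\kappa$ follows from the parallel Jordan structure at $\bar\lambda_{j,3}$ ensured by Condition \ref{cond-Mbar compact}, gives both $\alpha(\psi, \psi_\kappa)$ and $\alpha(\psi, \psi_{\kappa_g})$ terms; solving for the former and substituting the already-derived expression for the latter yields the third formula.

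The main obstacle I anticipate is verifying that the denominators in all three formulas are nonzero, that is, that the duals $\widetilde\psi_\kappa$ can be chosen so that $(\psi_\kappa, \widetilde\psi_\kappa)_{H(\partial B)} \neq 0$ in the simple cases and $(\psi_{\kappa_g}, \widetilde\psi_\kappa)_{H(\partial B)} \neq 0$ in the Jordan case. This is the content of the Fredholm alternative applied to $\lambda_\kappa - \widetilde{\mathcal{M}}_B$ on the finite-dimensional generalized eigenspace, together with the fact that $(\widetilde{\mathcal{M}}_B)^*$ has matching algebraic multiplicities at $\bar\lambda_\kappa$; both ingredients are available from the compactness of $\widetilde{\mathcal{M}}_B$ granted by Condition \ref{cond-Mbar compact} and Proposition \ref{lem-spectMtilde max}.
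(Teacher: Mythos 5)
Your argument is correct and is exactly the biorthogonality/Fredholm-alternative reasoning that the paper invokes (the paper gives no details, stating only that the lemma "follows from the Fredholm alternative"). You supply what the paper omits: the cross-eigenvalue orthogonality of $\psi_{\kappa'}$ against $\widetilde{\psi}_{\kappa}$, the key cancellation $(\psi_{j,3},\widetilde{\psi}_{j,3})_{H(\p B)}=0$ coming from the Jordan relation, and the back-substitution that yields the three formulas, together with the correct observation that nondegeneracy of the denominators is the content of the Fredholm alternative on the finite-dimensional generalized eigenspaces.
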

The following remark is in order. 
\begin{rmk} \label{rmk-alpha max}
Note that, since $\varphi_{j,1}$ and $\varphi_{j,3}^{(1)}$ form an orthogonal basis of $\mathcal{H}^*_0(\p B)$, equivalent to $H^{-\f{1}{2}}_0(\p B)$, we also have
\beas
\alpha(\psi,\psi_{\kappa}) = \left\{\begin{array}{cc}
(\Delta_{\p B}\psi^{(1)},\varphi_{j,1})_{\mathcal{H}^*} & \kappa = (j,1),\\
\f{1}{c}(\Delta_{\p B}\psi^{(1)},\varphi_{j,3}^{(1)})_{\mathcal{H}^*} & \kappa = (j,3,g),
\end{array}\right. 
\eeas
where $c$ is defined in \eqref{eq-constant c max}.
\end{rmk}
\begin{rmk} \label{rmk-inv(L-M) max} For $i = 1,2,3,$ and $j=1,2,\ldots,$ 
\beas
(\lambda Id - \widetilde{\mathcal{M}}_B)^{-1}[\psi_{j,i}] &=& \f{\psi_{j,i}}{\lambda - \lambda_{j,i}}, \\
(\lambda Id - \widetilde{\mathcal{M}}_B)^{-1}[\psi_{j,3,g}] &=& \f{\psi_{j,3,g}}{\lambda - \lambda_{j,3}} + \f{\psi_{j,3}}{(\lambda - \lambda_{j,3})^2}. 
\eeas
\end{rmk}

Now we turn to the original equation \eqref{eq-Maxwell_System}. The following result holds.
\begin{lem}
The system of equations \eqref{eq-Maxwell_System} is equivalent to
\bea \label{eq-Maxwell_System_delta2}
W_B(\delta)\left(\begin{array}{c}
\eta(\psi)^{(1)}\\
\eta(\psi)^{(2)}\\
\om\eta(\phi)^{(1)}\\
\om\eta(\phi)^{(2)}
\end{array}\right) =
\left(\begin{array}{c}
\df{\eta(\nu\times E^i)^{(1)}}{\mu_m-\mu_c}\\
\df{\eta(\nu\times E^i)^{(2)}}{\mu_m-\mu_c} \\
\df{\eta(i\nu\times H^i)^{(1)}}{\eps_m-\eps_c}\\
\df{\eta(i\nu\times H^i)^{(2)}}{\eps_m-\eps_c}
\end{array}\right)\Bigg\vert_{\p B} , 
\eea
where
\beas
W_B(\delta) &=& W_{B,0} + \delta W_{B,1} + \delta^2 W_{B,2} + O(\delta^3)
\eeas
with
\beas \label{eq-W max2}
W_{B,0} &=& \left(\begin{array}{cc}
\lambda_{\mu} Id - \widetilde{\mathcal{M}}_B & O\\
O & \lambda_{\eps} Id - \widetilde{\mathcal{M}}_B
\end{array}\right),\\
W_{B,1} &=& \left(\begin{array}{cc}
O & \df{1}{\mu_m-\mu_c}\widetilde{\mathcal{L}}_{B,1}\\
\df{1}{\eps_m-\eps_c}\widetilde{\mathcal{L}}_{B,1} & O
\end{array}\right),\\
W_{B,2} &=& \left(\begin{array}{cc}
\df{1}{\mu_m-\mu_c}\widetilde{\mathcal{M}}_{B,2}^{\mu} & \df{1}{\mu_m-\mu_c}\widetilde{\mathcal{L}}_{B,2}\\
\df{1}{\eps_m-\eps_c}\widetilde{\mathcal{L}}_{B,2} & \df{1}{\eps_m-\eps_c}\widetilde{\mathcal{M}}_{B,2}^{\eps}
\end{array}\right)
\eeas
and
\beas
\widetilde{\mathcal{M}}_B &=& \left( \begin{array}{cc}
-\Delta_{\p B}^{-1}\mathcal{K}_B^*\Delta_{\p B} & 0\\
\mathcal{R}_B &  \mathcal{K}_B 
\end{array} \right), \\
\widetilde{\mathcal{M}}_{B,2}^{\mu} &=& \left( \begin{array}{cc}
\Delta_{\p B}^{-1}\nabla_{\p B}\cdot(\mu_m\mathcal{M}_{B,2}^{k_m}-\mu_c\mathcal{M}_{B,2}^{k_c})\nabla_{\p B} & \Delta_{\p B}^{-1}\nabla_{\p B}\cdot(\mu_m\mathcal{M}_{B,2}^{k_m}-\mu_c\mathcal{M}_{B,2}^{k_c})\vec{\textnormal{curl}}_{\p B}\\
-\Delta_{\p B}^{-1}\textnormal{curl}_{\p B}(\mu_m\mathcal{M}_{B,2}^{k_m}-\mu_c\mathcal{M}_{B,2}^{k_c})\nabla_{\p B} & -\Delta_{\p B}^{-1}\textnormal{curl}_{\p B}(\mu_m\mathcal{M}_{B,2}^{k_m}-\mu_c\mathcal{M}_{B,2}^{k_c})\vec{\textnormal{curl}}_{\p B}
\end{array} \right),\\
\widetilde{\mathcal{M}}_{B,2}^{\eps} &=& \left( \begin{array}{cc}
\Delta_{\p B}^{-1}\nabla_{\p B}\cdot(\eps_m\mathcal{M}_{B,2}^{k_m}-\eps_c\mathcal{M}_{B,2}^{k_c})\nabla_{\p B} & \Delta_{\p B}^{-1}\nabla_{\p B}\cdot(\eps_m\mathcal{M}_{B,2}^{k_m}-\eps_c\mathcal{M}_{B,2}^{k_c})\vec{\textnormal{curl}}_{\p B}\\
-\Delta_{\p B}^{-1}\textnormal{curl}_{\p B}(\eps_m\mathcal{M}_{B,2}^{k_m}-\eps_c\mathcal{M}_{B,2}^{k_c})\nabla_{\p B} & -\Delta_{\p B}^{-1}\textnormal{curl}_{\p B}(\eps_m\mathcal{M}_{B,2}^{k_m}-\eps_c\mathcal{M}_{B,2}^{k_c})\vec{\textnormal{curl}}_{\p B}
\end{array} \right),\\
\widetilde{\mathcal{L}}_{B,s} &=& \left( \begin{array}{cc}
\Delta_{\p B}^{-1}\nabla_{\p B}\cdot\mathcal{L}_{B,s}\nabla_{\p B} & \Delta_{\p B}^{-1}\nabla_{\p B}\cdot\mathcal{L}_{B,s}\vec{\textnormal{curl}}_{\p B}\\
-\Delta_{\p B}^{-1}\textnormal{curl}_{\p B}\mathcal{L}_{B,s}\nabla_{\p B} & -\Delta_{\p B}^{-1}\textnormal{curl}_{\p B}\mathcal{L}_{B,s}\vec{\textnormal{curl}}_{\p B}\end{array} \right),
\eeas
for $s=1,2$.\\
Moreover,  the eigenfunctions of $W_{B,0}$ in $H(\p B)^2$ are given by
\beas
\Psi_{1,j,i} &=& \left(\begin{array}{c}
\psi_{j,i}\\
O
\end{array}\right)\quad j=0,1,2,\dots; i=1,2,3, \\
\Psi_{2,j,i} &=& \left(\begin{array}{c}
O\\
\psi_{j,i}
\end{array}\right)\quad j=0,1,2,\dots; i=1,2,3,
\eeas
associated to the eigenvalues $\lambda_{\mu}-\lambda_{j,i}$ and $\lambda_{\eps}-\lambda_{j,i}$, respectively,
and generalized eigenfunctions of order one
\beas
\Psi_{1,j,3,g} &=& \left(\begin{array}{c}
\psi_{j,3,g}\\
O
\end{array}\right), \\
\Psi_{2,j,3,g} &=& \left(\begin{array}{c}
O\\
\psi_{j,3,g}
\end{array}\right),
\eeas
associated to eigenvalues $\lambda_{\mu}-\lambda_{j,3}$ and $\lambda_{\eps}-\lambda_{j,3}$, respectively, all of which form a non-orthogonal basis of $H(\p B)^2$.
\end{lem}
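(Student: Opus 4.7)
The plan is to start from the system \eqref{eq-Maxwell_System_delta} already written in the scaled variables and convert it, block by block, into the Helmholtz-decomposed form \eqref{eq-Maxwell_System_delta2}. Concretely, I would apply Lemma \ref{lem-Helm_decomposition} to the two unknowns, writing
$$
\eta(\psi)=\nabla_{\p B}\eta(\psi)^{(1)}+\vec{\textnormal{curl}}_{\p B}\eta(\psi)^{(2)},\qquad
\om\eta(\phi)=\nabla_{\p B}\om\eta(\phi)^{(1)}+\vec{\textnormal{curl}}_{\p B}\om\eta(\phi)^{(2)},
$$
and do the same for the right-hand sides of \eqref{eq-Maxwell_System_delta}. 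Testing each equation against $\Delta_{\p B}^{-1}\nabla_{\p B}\cdot$ and $-\Delta_{\p B}^{-1}\textnormal{curl}_{\p B}$ projects it onto the two summands in Lemma \ref{lem-Helm_decomposition}, turning every scalar block operator acting on $H^{-\f{1}{2}}_T(\textnormal{div},\p B)$ into a $2\times2$ operator matrix acting on $H(\p B)=H^{\f{3}{2}}_0(\p B)\times H^{\f{1}{2}}(\p B)$.

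Next, I would identify the three types of blocks that appear in $\mathcal{W}_B(\delta)$ in \eqref{eq-W max}. For the leading diagonal blocks $\lambda_\mu Id-\mathcal{M}_B$ and $\lambda_\eps Id-\mathcal{M}_B$, Lemma \ref{eq-equivalence0 max} gives directly
$$
\lambda_\mu Id-\mathcal{M}_B \;\rightsquigarrow\; \lambda_\mu Id-\widetilde{\mathcal{M}}_B,
\qquad
\lambda_\eps Id-\mathcal{M}_B \;\rightsquigarrow\; \lambda_\eps Id-\widetilde{\mathcal{M}}_B,
$$
producing the block-diagonal $W_{B,0}$. For the $O(\delta)$ off-diagonal blocks $\mathcal{L}_{B,1}$, the projection recipe of the previous paragraph yields exactly
$$
\widetilde{\mathcal{L}}_{B,1}=\begin{pmatrix}
\Delta_{\p B}^{-1}\nabla_{\p B}\cdot\mathcal{L}_{B,1}\nabla_{\p B} & \Delta_{\p B}^{-1}\nabla_{\p B}\cdot\mathcal{L}_{B,1}\vec{\textnormal{curl}}_{\p B}\\
-\Delta_{\p B}^{-1}\textnormal{curl}_{\p B}\mathcal{L}_{B,1}\nabla_{\p B} & -\Delta_{\p B}^{-1}\textnormal{curl}_{\p B}\mathcal{L}_{B,1}\vec{\textnormal{curl}}_{\p B}
\end{pmatrix},
$$
and for the $O(\delta^2)$ blocks the identical recipe applied to $\mathcal{L}_{B,2}$ and to $\mu_m\mathcal{M}_{B,2}^{k_m}-\mu_c\mathcal{M}_{B,2}^{k_c}$ (resp. the $\eps$-version) gives $\widetilde{\mathcal{L}}_{B,2}$, $\widetilde{\mathcal{M}}_{B,2}^{\mu}$ and $\widetilde{\mathcal{M}}_{B,2}^{\eps}$. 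The $O(\delta^3)$ remainder in $\mathcal{W}_B(\delta)$ controlled by Lemmas \ref{lem-asymptotics M max} and \ref{lem-asymptotics L max} transfers to an $O(\delta^3)$ remainder for $W_B(\delta)$ because the projectors $\Delta_{\p B}^{-1}\nabla_{\p B}\cdot$ and $\Delta_{\p B}^{-1}\textnormal{curl}_{\p B}$ are bounded between the relevant spaces (using the invertibility of $\Delta_{\p B}$ recalled in the remark following Lemma \ref{lem-Helm_decomposition}).

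For the spectral statement about $W_{B,0}$, I would exploit its $2\times2$ block-diagonal structure: an eigenvector must either be of the form $(\psi,0)^T$ with $(\lambda_\mu-\widetilde{\mathcal{M}}_B)\psi=\mu\psi$ or $(0,\psi)^T$ with $(\lambda_\eps-\widetilde{\mathcal{M}}_B)\psi=\mu\psi$. Proposition \ref{lem-spectMtilde max} then gives the full list $\Psi_{1,j,i},\Psi_{2,j,i}$ and the order-one generalized eigenfunctions $\Psi_{1,j,3,g},\Psi_{2,j,3,g}$ with the stated eigenvalues $\lambda_\mu-\lambda_{j,i}$ and $\lambda_\eps-\lambda_{j,i}$; the fact that these form a (non-orthogonal) basis of $H(\p B)^2$ is immediate from the analogous basis statement of Proposition \ref{lem-spectMtilde max} in each component.

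The main obstacle is bookkeeping rather than a conceptual difficulty: one has to verify, for the new operators $\widetilde{\mathcal{L}}_{B,s}$ and $\widetilde{\mathcal{M}}_{B,2}^{\mu,\eps}$, that composing with $\Delta_{\p B}^{-1}\nabla_{\p B}\cdot$ and $\Delta_{\p B}^{-1}\textnormal{curl}_{\p B}$ on the outside, and with $\nabla_{\p B}$, $\vec{\textnormal{curl}}_{\p B}$ on the inside, lands in the correct Sobolev scale on $\p B$, so that the $O(\delta^3)$ error bound in operator norm on $H(\p B)^2$ is preserved. Once this is checked, writing the system in the Helmholtz-decomposed unknowns yields exactly \eqref{eq-Maxwell_System_delta2} with the claimed expansion of $W_B(\delta)$.
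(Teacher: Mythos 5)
Your proposal is correct and follows essentially the same route as the paper: the paper's proof is a one-line remark that the result follows from Lemma \ref{eq-equivalence0 max} and Proposition \ref{lem-spectMtilde max}, and your argument fleshes out exactly the computation being referenced — Helmholtz-decomposing the unknowns and right-hand side, projecting each block of $\mathcal{W}_B(\delta)$ via $\Delta_{\p B}^{-1}\nabla_{\p B}\cdot$ and $-\Delta_{\p B}^{-1}\textnormal{curl}_{\p B}$ to obtain the $2\times 2$ block operators $\widetilde{\mathcal{M}}_B$, $\widetilde{\mathcal{L}}_{B,s}$, $\widetilde{\mathcal{M}}_{B,2}^{\mu,\eps}$, and reading off the spectral data of the block-diagonal $W_{B,0}$ from Proposition \ref{lem-spectMtilde max}.
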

\begin{proof} The proof follows directly from Lemmas \ref{eq-equivalence0 max} and \ref{lem-spectMtilde max}.
\end{proof}
We regard the operator $W_B(\delta)$ as a perturbation of the operator $W_{B,0}$ for small $\delta$. Using perturbation theory, we can derive the perturbed eigenvalues and their associated eigenfunctions in $H(\p B)^2$.\\
We denote by $\Gamma = \big\{(k,j,i): k=1,2 ; j=1,2,\dots ; i =1,2,3 \big\}$ the set of indices for the eigenfunctions of $W_{B,0}$ and  
by $\Gamma_g = \big\{(k,j,3,g) : k=1,2 ; j=1,2,\dots \big\}$ the set of indices for the generalized eigenfunctions. We denote by $\gamma_g$ the generalized eigenfunction index corresponding to eigenfunction index $\gamma$ and vice-versa. We also denote by
\be \label{eq-valpropW0 max}
\tau_{\gamma} =\left\{\begin{array}{cc}
\lambda_{\mu} - \lambda_{j,i} & k=1,\\
\lambda_{\eps} - \lambda_{j,i} & k=2.
\end{array}\right. 
\ee
\begin{cond} \label{cond-lambda-mu dist lambda-eps}
$\lambda_{\mu}\neq\lambda_{\eps}$.
\end{cond}
In the following we will only consider $\gamma \in \Gamma$ with which there is no generalized eigenfunction index associated. In other words, we only consider $\gamma = (k,i,j)\in \Gamma$ such that $\lambda_{j,i}\in \sigma_1 \cup \sigma_2$ (see \eqref{def-partition spectrum max} for the definitons). We call this subset $\Gamma_{\textnormal{sim}}$.\\
Note that Conditions \ref{cond-eigKstar max} and \ref{cond-lambda-mu dist lambda-eps}  imply that the eigenvalues of $W_{B,0}$ indexed by $\gamma \in \Gamma_{\textnormal{sim}}$ are simple.
As $\delta$ goes to zero, the perturbed eigenvalues and eigenfunctions indexed by $\gamma \in \Gamma_{\textnormal{sim}}$ have the following asymptotic expansions: 
\begin{eqnarray} \label{eq-perturbation max}
\tau_{\gamma}(\delta) &=& \tau_{\gamma} + \delta \tau_{\gamma,1} + \delta^2 \tau_{\gamma,2} + O(\delta^3),\\
\Psi_{\gamma}(\delta) &=& \Psi_{\gamma} + \delta \Psi_{\gamma,1} + O(\delta^2),\nonumber
\end{eqnarray}
where
\begin{eqnarray} 
\tau_{\gamma,1} &=& \f{(W_{B,1}\Psi_{\gamma},\widetilde{\Psi}_{\gamma})_{H(\p B)^2}}{(\Psi_{\gamma},\widetilde{\Psi}_{\gamma})_{H(\p B)^2}} = 0, \nonumber \\
\tau_{\gamma,2} &=& \f{(W_{B,2}\Psi_{\gamma},\widetilde{\Psi}_{\gamma})_{H(\p B)^2}-(W_{B,1}\Psi_{\gamma,1},\widetilde{\Psi}_{\gamma})_{H(\p B)^2}}{(\Psi_{\gamma},\widetilde{\Psi}_{\gamma})_{H(\p B)^2}}, \label{eq-perturbation_term max}\\
(\tau_{\gamma}-W_{B,0})\Psi_{\gamma,1} &=& -W_{B,1}\Psi_{\gamma}. \nonumber
\end{eqnarray}
Here, $\widetilde{\Psi}_{\gamma'}\in \textnormal{Ker} (\bar{\tau}_{\gamma'}-W^*_{B,0})$ and $W^*_{B,0}$ is the $H(\p B)^2$ adjoint of $W_{B,0}$.\\
Using Lemma \ref{lem-alpha max} and Remark \ref{rmk-inv(L-M) max} we can solve $\Psi_{\gamma,1}$. Indeed, 
\beas
\Psi_{\gamma,1} = \sum_{\gamma'\in \Gamma \atop \gamma'\neq\gamma}\f{\alpha(-W_{B,1}\Psi_{\gamma},\Psi_{\gamma'})\Psi_{\gamma'}}{\tau_{\gamma}-\tau_{\gamma'}} + \sum_{\gamma_g'\in \Gamma_g \atop \gamma'\neq\gamma}\alpha(-W_{B,1}\Psi_{\gamma},\Psi_{\gamma_g'})\left(\f{\Psi_{\gamma_g'}}{\tau_{\gamma}-\tau_{\gamma'}}+\f{\Psi_{\gamma'}}{(\tau_{\gamma}-\tau_{\gamma'})^2}\right) \\
+ \ \ \alpha(-W_{B,1}\Psi_{\gamma},\Psi_{\gamma})\Psi_{\gamma}.
\eeas
By abuse of notation, 
\bea \label{eq-alpha2}
\alpha(x,\Psi_{\gamma}) =  \left\{\begin{array}{cc}
\alpha(x_1, \psi_{\kappa}) & \gamma = (1,j,i),\,\kappa = (j,i),\\
\alpha(x_2, \psi_{\kappa}) & \gamma = (2,j,i),\,\kappa = (j,i),
\end{array}\right. 
\eea
for
\beas
x = \left(\begin{array}{cc}
x_1\\
x_2
\end{array}\right)\in H(\p B)^2,
\eeas
and $\alpha$ introduced in Definition \ref{def-proj_basis}.\\

Consider now the degenerate case $\gamma \in \Gamma \backslash \Gamma_{\textnormal{sim}} =: \Gamma_{\textnormal{deg}} = \{ \gamma = (k,i,j)\in \Gamma$ s.t $\lambda_{j,i}\in \sigma_3 \}$. It is clear that, for $\gamma\in \Gamma_{\textnormal{deg}}$, the algebraic multiplicity of the eigenvalue $\tau_{\gamma}$ is 2 while the geometric multiplicity is 1.\\
In this case every eigenvalue $\tau_{\gamma}$ and associeted eigenfunction $\Psi_{\gamma}$ will slipt into two branches, as $\delta$ goes to zero, represented by a convergent Puiseux series as \cite{triki2004}:
\begin{eqnarray} \label{eq-perturbation Puiseux max}
\tau_{\gamma,h}(\delta) &=& \tau_{\gamma} + (-1)^h\delta^{1/2}\tau_{\gamma,1} + (-1)^{2h}\delta^{2/2}\tau_{\gamma,2} + O(\delta^{3/2}), \quad h=0,1,\\
\Psi_{\gamma,h}(\delta) &=& \Psi_{\gamma} + (-1)^h\delta^{1/2}\Psi_{\gamma,1} + (-1)^{2h}\delta^{2/2}\Psi_{\gamma,2} + O(\delta^{3/2}), \quad h=0,1,\nonumber
\end{eqnarray}
where $\tau_{\gamma,j}$ and $\Psi_{\gamma,j}$ can be recovered by recurrence formulas. For simplicity we refer to \cite{kato} for more details.

\subsection{First-order correction to plasmonic resonances and field behavior at the plasmonic resonances}
 Recall that the electric and magnetic parameters, $\eps_c$ and $\mu_c$, depend on the frequency of the incident field, $\om$, following the Drude model \cite{pierre}. Therefore, the eigenvalues of the operator $W_{B,0}$ and perturbation in the eigenvalues depend on the frequency as well, that is
\beas
\tau_{\gamma}(\delta,\om) &=& \tau_{\gamma}(\om) + \delta^2 \tau_{\gamma,2}(\om) + O(\delta^3) \quad \gamma\in \Gamma_{\textnormal{sim}},\\
\tau_{\gamma,h}(\delta,\om) &=& \tau_{\gamma} + \delta^{1/2} (-1)^h\tau_{\gamma,1}(\om) + \delta^{2/2} (-1)^{2h}\tau_{\gamma,2}(\om) + O(\delta^{3/2}), \quad \gamma\in \Gamma_{\textnormal{deg}},\quad h=0,1.
\eeas
In the sequel, we will omit frequency dependence to simplify the notation. However, we will keep in mind that all these quantities are frequency dependent.\\
We first recall different notions of plasmonic resonance  \cite{matias}. 
\begin{definition} \label{def-plasmonicFreq}
\begin{itemize}
\item[(i)] We say that $\om$ is a plasmonic resonance if $|\tau_{\gamma}(\delta)| \ll 1$ and is locally minimized for some $ \gamma \in \Gamma_{\textnormal{sim}}$ or $|\tau_{\gamma,h}(\delta)| \ll 1$ and is locally minimized for some $\gamma\in \Gamma_{\textnormal{deg}}$, $h=0,1$.
\item[(ii)] We say that $\om$ is a quasi-static plasmonic resonance if
$|\tau_{\gamma}| \ll 1 $ and is locally minimized for some $\gamma \in \Gamma$. Here, $\tau_{\gamma}$ is 
defined by \eqref{eq-valpropW0 max}. 
\item[(iii)] We say that $\om$ is a first-order corrected quasi-static plasmonic resonance if $|\tau_{\gamma} + \delta^2 \tau_{\gamma,2}| \ll 1$ and is locally minimized for some $\gamma \in \Gamma_{\textnormal{sim}}$ or $|\tau_{\gamma} + \delta^{1/2} (-1)^h\tau_{\gamma,1}|\ll 1$ and is locally minimized for some $\gamma \in \Gamma_{\textnormal{deg}}$, $h=0,1$. Here, the correction terms $\tau_{\gamma,2}$ and $\tau_{\gamma,1}$ are 
defined by \eqref{eq-perturbation_term max} and \eqref{eq-perturbation Puiseux max}. 
\end{itemize}
\end{definition}

Note that quasi-static resonance is size independent and is therefore a zero-order approximation of the plasmonic resonance in terms of the particle size while the first-order corrected quasi-static plasmonic resonance depends on the size of the nanoparticle.

We are interested in solving equation \eqref{eq-Maxwell_System_delta2}
\beas 
W_B(\delta)\Psi = f,
\eeas
where
\beas
\Psi = \left(\begin{array}{c}
\eta(\psi)^{(1)}\\
\eta(\psi)^{(2)}\\
\om\eta(\phi)^{(1)}\\
\om\eta(\phi)^{(2)}
\end{array}\right),
f = \left(\begin{array}{c}
\df{\eta(\nu\times E^i)^{(1)}}{\mu_m-\mu_c}\\
\df{\eta(\nu\times E^i)^{(2)}}{\mu_m-\mu_c} \\
\df{\eta(i\nu\times H^i)^{(1)}}{\eps_m-\eps_c}\\
\df{\eta(i\nu\times H^i)^{(2)}}{\eps_m-\eps_c}
\end{array}\right)\Bigg\vert_{\p B}
\eeas
for $\om$ close to the resonance frequencies, i.e., when $\tau_{\gamma}(\delta)$ is very small for some $\gamma$'s $\in \Gamma_{\textnormal{sim}}$ or $\tau_{\gamma,h}(\delta)$ is very small for some $\gamma$'s $\in \Gamma_{\textnormal{deg}}$, $h=0,1$. In this case, the major part of the solution would be the contributions of the excited resonance modes $\Psi_{\gamma}(\delta)$ and $\Psi_{\gamma,h}(\delta)$.\\
It is important to remark that problem \eqref{eq-Maxwell_System} could be ill-posed if either $\Re(\eps_c)\leq 0$ or $\Re(\mu_c)\leq 0$ (the imaginary part being very small), and this are precisely the cases for which we will find the resonances described above. In fact, what we do is to solve the problem for the cases $\Re(\eps_c)>0$ or $\Re(\mu_c)>0$ and then, analytically continue the solution to the general case. The resonances are the values of $\om$ for which this analytical continuation "almost" cease to be valid.\\
We introduce the following definition.
\begin{definition} \label{def-j}
We call $J \subset \Gamma$ index set of resonances if $\tau_{\gamma}$'s are close to zero when $\gamma \in \Gamma$ and are bounded from below when $\gamma \in \Gamma^c$. More precisely, we choose a threshold number $\e_0 >0$ independent of $\om$ such that
$$
 | \tau_{\gamma} | \geq \e_0 >0 \quad \mbox{for }\, \gamma \in J^c.
$$
\end{definition}


From now on, we shall use $J$ as our index set of resonances.
For simplicity, we assume throughout this paper that the following condition holds.
\begin{cond} \label{condition1add max}
We assume that $\lambda_{\mu} \neq 0$, $\lambda_{\eps} \neq 0$ or equivalently, $\mu_c \neq - \mu_m$, $\eps_c \neq - \eps_m$. 
\end{cond}
It follows that the set $J$ is finite.\\
Consider the space $\mathcal{E}_J=\mbox{span}\{\Psi_{\gamma}(\delta),\Psi_{\gamma,h}(\delta);\; \gamma \in J,\,h=0,1\}$. Note that, under Condition \ref{condition1add max}, $\mathcal{E}_J$ is finite dimensional. Similarly, we define $\mathcal{E}_{J^c}$ as the spanned by $\Psi_{\gamma}(\delta),\Psi_{\gamma,h}(\delta);\; \gamma \in J^c,\,h=0,1$ and eventually other vectors to complete the base. We have $H(\p B)^2 = \mathcal{E}_J\oplus\mathcal{E}_{J^c}$.

We define $P_J (\delta)$ and $P_{J^c} (\delta)$ as the projection into the finite-dimensional space $\mathcal{E}_J$ and infinite-dimensional space $\mathcal{E}_{J^c}$, respectively. It is clear that, for any $f \in H(\p B)^2$
\beas
f = P_J(\delta)[f] + P_{J^c}(\delta)[f].
\eeas
Moreover, we have an explicit representation for $P_J(\delta)$
\be
P_J(\delta)[f] = \sum_{\gamma\in J\cap\Gamma_{\textnormal{sim}}}\alpha_{\delta}(f,\Psi_{\gamma}(\delta))\Psi_{\gamma}(\delta)+\sum_{\gamma\in J\cap\Gamma_{\textnormal{deg}} \atop h=0,1}\alpha_{\delta}(f,\Psi_{\gamma,h}(\delta))\Psi_{\gamma,h}(\delta).
\ee
Here, as in Lemma \ref{lem-alpha max}, 
\beas
\alpha_{\delta}(f,\Psi_{\gamma}(\delta)) &=& \f{(f,\tilde{\Psi}_{\gamma}(\delta))_{H(\p B)^2}}{(\Psi_{\gamma}(\delta),\tilde{\Psi}_{\gamma}(\delta))_{H(\p B)^2}}, \quad \gamma \in J\cap\Gamma_{\textnormal{sim}},\\
\alpha_{\delta}(f,\Psi_{\gamma,h}(\delta)) &=& \f{(f,\tilde{\Psi}_{\gamma,h}(\delta))_{H(\p B)^2}}{(\Psi_{\gamma,h}(\delta),\tilde{\Psi}_{\gamma,h}(\delta))_{H(\p B)^2}}, \quad \gamma \in J\cap\Gamma_{\textnormal{deg}},\, h=0,1.
\eeas
where $\widetilde{\Psi}_{\gamma}\in \textnormal{Ker} (\bar{\tau}_{\gamma,h}(\delta)-W^*_{B}(\delta))$, $\widetilde{\Psi}_{\gamma,h}\in \textnormal{Ker} (\bar{\tau}_{\gamma,h}(\delta)-W^*_{B}(\delta))$ and $W^*_{B}(\delta)$ is the $H(\p B)^2$-adjoint of $W_{B}(\delta)$.

We are now ready to solve the equation $W_B(\delta)\Psi = f$. In view of Remark \ref{rmk-inv(L-M) max},
\be
\Psi= {W}_{B}^{-1}(\delta)[f] = 
\sum_{\gamma\in J\cap\Gamma_{\textnormal{sim}}}\f{\alpha_{\delta}(f,\Psi_{\gamma}(\delta))\Psi_{\gamma}(\delta)}{\tau_{\gamma}(\delta)}+\sum_{\gamma\in J\cap\Gamma_{\textnormal{deg}} \atop h=0,1}\f{\alpha_{\delta}(f,\Psi_{\gamma,h}(\delta))\Psi_{\gamma,h}(\delta)}{\tau_{\gamma,h}(\delta)}
+ {W}_{B}^{-1}(\delta)P_{J^c}(\delta) [f].
\ee
The following lemma holds.
\begin{lem} \label{lem-residu} The norm 
 $\| {W}_{B}^{-1}(\delta)P_{J^c}(\delta)\|_{
 \mathcal{L}(H(\p B)^2, H(\p B)^2 )}$ is uniformly bounded in $\om$ and $\delta$.
\end{lem}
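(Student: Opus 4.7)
The plan is to decompose $H(\p B)^2 = \mathcal{E}_J \oplus \mathcal{E}_{J^c}$ and show that the restriction of $W_B(\delta)$ to $\mathcal{E}_{J^c}$ is uniformly invertible, while $P_{J^c}(\delta)$ is uniformly bounded.

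First, I would observe that by Lemma \ref{lem-Kstar_properties max} (iii) the eigenvalues $\lambda_j$ of $\mathcal{K}_B^*$ accumulate only at $0$, and therefore the spectrum of $\widetilde{\mathcal{M}}_B$ accumulates only at $0$ as well (by Proposition \ref{lem-spectMtilde max} together with Condition \ref{cond-Mbar compact}, which ensures compactness up to a finite-rank perturbation). Since Condition \ref{condition1add max} guarantees $\lambda_\mu, \lambda_\eps \neq 0$, the eigenvalues $\tau_\gamma$ of $W_{B,0}$ accumulate only at the two nonzero points $\lambda_\mu$ and $\lambda_\eps$. Consequently, only finitely many $\tau_\gamma$ lie in the disc $\{|z|<\eps_0\}$, so $J$ is finite and $|\tau_\gamma|\geq \eps_0$ for all $\gamma \in J^c$.

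Second, using the perturbation expansions \eqref{eq-perturbation max} in the simple case and the Puiseux expansion \eqref{eq-perturbation Puiseux max} in the degenerate case, for every $\gamma \in J^c$ I would bound
\[
|\tau_\gamma(\delta)|\;\geq\; |\tau_\gamma| - C\,\delta^{1/2}\;\geq\;\frac{\eps_0}{2},
\]
for $\delta$ sufficiently small (independently of $\om$, since the threshold $\eps_0$ in Definition \ref{def-j} is $\om$-independent). On $\mathcal{E}_{J^c}$ the operator $W_B(\delta)$ then admits the spectral representation obtained from Remark \ref{rmk-inv(L-M) max}, and each denominator $\tau_\gamma(\delta)$ or $\tau_{\gamma,h}(\delta)$ contributes a factor of modulus at most $2/\eps_0$. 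Hence
\[
\bigl\| W_B^{-1}(\delta)\bigr|_{\mathcal{E}_{J^c}}\bigr\|_{\mathcal{L}(H(\p B)^2)}\;\leq\; \frac{C_1}{\eps_0}
\]
uniformly in $\om$ and $\delta$.

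Third, since $P_{J^c}(\delta) = \mathrm{Id} - P_J(\delta)$ and $J$ is finite, the projection $P_J(\delta)$ is of uniformly bounded finite rank; the explicit formula for $\alpha_\delta(\cdot,\Psi_\gamma(\delta))$ shows that $P_J(\delta)$ depends analytically on $\delta$ near $0$, so $\|P_J(\delta)\|$ and therefore $\|P_{J^c}(\delta)\|$ are uniformly bounded. Combining these two bounds yields the conclusion. The main obstacle lies in controlling $P_J(\delta)$ uniformly in $\om$, since the eigenfunction basis of $W_B(\delta)$ is non-orthogonal: one must check that the dual vectors $\widetilde{\Psi}_\gamma(\delta)\in\mathrm{Ker}(\bar\tau_\gamma(\delta)-W_B^*(\delta))$ stay bounded and bounded away from the kernel of $\Psi_\gamma(\delta)$. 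This is handled by the block-triangular structure of $W_{B,0}$, whose adjoint spectral data can be read off explicitly from Lemma \ref{eq-equivalence0 max} and Lemma \ref{lem-alpha max}, combined with the analytic dependence in $\delta$ to transfer the bound to $W_B(\delta)$.
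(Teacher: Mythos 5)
Your overall strategy mirrors the paper's: restrict $W_B(\delta)$ to the complementary invariant subspace, show the spectrum of the restriction stays a distance $\eps_0/2$ from the origin, and separately bound $P_{J^c}(\delta)$. The finiteness of $J$, the lower bound $|\tau_\gamma(\delta)|\geq\eps_0/2$ for $\gamma\in J^c$, and the boundedness of $P_{J^c}(\delta)$ are all handled as in the paper.

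However, your second step contains a genuine gap. You assert that, since each denominator $\tau_\gamma(\delta)$ (or $\tau_{\gamma,h}(\delta)$) has modulus at least $\eps_0/2$, the spectral expansion from Remark \ref{rmk-inv(L-M) max} yields $\|W_B^{-1}(\delta)|_{\mathcal{E}_{J^c}}\|\leq C_1/\eps_0$. This inference is valid only when the eigenfunction system is orthogonal (more generally, when the operator is normal), so that the map $f\mapsto(\alpha_\delta(f,\Psi_\gamma(\delta)))_\gamma$ and its inverse are isometries. Here the operator $W_B(\delta)$ is compact-perturbed but decidedly \emph{not} normal: its eigenbasis is explicitly non-orthogonal (Proposition \ref{lem-spectMtilde max}), and there are Jordan blocks associated with $\sigma_3$. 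For a non-normal compact operator the resolvent norm at distance $\eps_0$ from the spectrum is \emph{not} controlled by $1/\eps_0$; the growth of the resolvent can be governed by the conditioning of the (infinitely many) biorthogonal projections, which you have not estimated. The paper accordingly invokes a Carleman/Gil-type resolvent estimate for compact operators (cf.\ \cite{Gil}), giving the bound
\[
\|W_B^{-1}(\delta)|_{J^c}\|\ \lesssim\ \frac{1}{\eps_0}\exp\!\Bigl(\frac{C_1}{\eps_0^2}\Bigr),
\]
which is much weaker than $C_1/\eps_0$ but actually provable. You also misplace the ``main obstacle'': controlling $P_J(\delta)$ is a finite-dimensional problem and is the easier half; the real difficulty is exactly the resolvent bound on the infinite-dimensional complement, which your sketch treats as immediate. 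To repair the argument you would need to replace the naive $C_1/\eps_0$ bound by a quantitative resolvent estimate for compact non-normal operators, as the paper does.
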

\begin{proof}
Consider the operator
$$
W_{B}(\delta)|_{J^c}: P_{J^c}(\delta)H(\p B)^2 \rightarrow  P_{J^c}(\delta)H(\p B)^2.
$$
We can show that for every $\om$ and $\delta$, $\mathrm{dist}(\sigma ( W_{B}(\delta)|_{J^c}), 0) \geq \f{\e_0}{2}$, where $\sigma (W_{B}(\delta)|_{J^c})$ is the discrete spectrum of $W_{B}(\delta)|_{J^c}$.
Here and throughout the paper, $\mathrm{dist}$ denotes the distance. 
Then, it follows that
$$
\| {W}_{B}^{-1}(\delta)P_{J^c}(\delta)[f] \| = \| W_{B}^{-1}(\delta)|_{J^c}  P_{J^c}(\delta) [f] \| \lesssim \f{1}{\e_0} \exp(\f{C_1}{\e_0^2}) \| P_{J^c}(\delta) [f]\| \lesssim  \f{1}{\e_0} \exp(\f{C_1}{\e_0^2}) \| f\|,
$$
where the notation $A \lesssim B$ means that $A \leq C B$ for some constant $C$ independent of $A$ and $B$.
\end{proof}

Finally, we are ready to state our main result in this section.

\begin{thm} \label{thm1}
Let $\eta$ be defined by (\ref{defeta}). Under Conditions \ref{cond-eigKstar max}, \ref{cond-Mbar compact}, \ref{cond-lambda-mu dist lambda-eps} and \ref{condition1add max},  the scattered field $E^s=E-E^i$ due to a single plasmonic particle has the following representation:
\beas
E^s = \mu_m\nabla\times\vec{\mathcal{S}}_{D}^{k_m}[\psi](x) + \nabla\times\nabla\times\vec{\mathcal{S}}_{D}^{k_m}[\phi](x) & \quad x\in \mathbb{R}^3\backslash\bar{D},
\eeas
where
\beas
\psi &=& \eta^{-1}\big(\nabla_{\p B}\widetilde{\psi}^{(1)} + \vec{\textnormal{curl}}_{\p B}\widetilde{\psi}^{(2)}\big) ,\\
\phi &=& \f{1}{\om}\eta^{-1}\big(\nabla_{\p B}\widetilde{\phi}^{(1)} + \vec{\textnormal{curl}}_{\p B}\widetilde{\phi}^{(2)}\big),
\eeas
\beas
\Psi = \left(\begin{array}{c}
\widetilde{\psi}^{(1)}\\
\widetilde{\psi}^{(2)}\\
\widetilde{\phi}^{(1)}\\
\widetilde{\phi}^{(2)}
\end{array}\right) =
\sum_{\gamma\in J\cap\Gamma_{\textnormal{sim}}}\f{\alpha(f,\Psi_{\gamma})\Psi_{\gamma}+O(\delta)}{\tau_{\gamma}(\delta)}+\sum_{\gamma\in J\cap\Gamma_{\textnormal{deg}}}\f{\zeta_1(f)\Psi_{\gamma} + \zeta_2(f)\Psi_{\gamma,1}+O(\delta^{1/2})}{\tau_{\gamma,0}(\delta)\tau_{\gamma,1}(\delta)}
+ O(1),
\eeas
and
\beas
\zeta_1(f) &=& \f{(f,\tilde{\Psi}_{\gamma,1})_{H(\p B)^2}\tau_{\gamma}-(f,\tilde{\Psi}_{\gamma})_{H(\p B)^2}(\tau_{\gamma,1}+\tau_{\gamma}\f{a_2}{a_1})}{a_1},\\
\zeta_2(f) &=& \f{(f,\tilde{\Psi}_{\gamma})_{H(\p B)^2}}{a_1},\\
a_1 &=& (\Psi_{\gamma},\tilde{\Psi}_{\gamma,1})_{H(\p B)^2} + (\Psi_{\gamma,1},\tilde{\Psi}_{\gamma})_{H(\p B)^2},\\
a_2 &=& (\Psi_{\gamma},\tilde{\Psi}_{\gamma,2})_{H(\p B)^2} + (\Psi_{\gamma,2},\tilde{\Psi}_{\gamma})_{H(\p B)^2} +  (\Psi_{\gamma,1},\tilde{\Psi}_{\gamma,1})_{H(\p B)^2}.
\eeas
\end{thm}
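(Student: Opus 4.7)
The proof assembles three ingredients already established in the paper: (i) the layer potential representation \eqref{eq-Maxwell_Layer}, which reduces the computation of $E^s$ to solving the boundary integral system $W_B(\delta)\Psi = f$ given by \eqref{eq-Maxwell_System_delta2}; (ii) the spectral decomposition of the unperturbed operator $W_{B,0}$ into the eigenfunctions $\Psi_\gamma$ and generalized eigenfunctions $\Psi_{\gamma_g}$, which form a non-orthogonal basis of $H(\p B)^2$; and (iii) the perturbation expansions for the eigenvalues and eigenfunctions of $W_B(\delta)$, which are regular for $\gamma \in \Gamma_{\textnormal{sim}}$ and of Puiseux type \eqref{eq-perturbation Puiseux max} for $\gamma \in \Gamma_{\textnormal{deg}}$. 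First I would decompose $f = P_J(\delta)[f] + P_{J^c}(\delta)[f]$ and split the inversion of $W_B(\delta)$ accordingly; by Lemma \ref{lem-residu}, the non-resonant piece $W_B^{-1}(\delta)P_{J^c}(\delta)[f]$ contributes only the $O(1)$ remainder of the claimed formula.

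\textbf{Simple eigenvalue sum.} For each $\gamma \in J\cap\Gamma_{\textnormal{sim}}$ the operator $W_B(\delta)$ acts diagonally along $\Psi_\gamma(\delta)$, so the resonant contribution is $\alpha_\delta(f,\Psi_\gamma(\delta))\Psi_\gamma(\delta)/\tau_\gamma(\delta)$. Substituting $\Psi_\gamma(\delta) = \Psi_\gamma + O(\delta)$ and $\tilde{\Psi}_\gamma(\delta) = \tilde{\Psi}_\gamma + O(\delta)$ into the numerator and denominator of $\alpha_\delta(f,\Psi_\gamma(\delta)) = (f,\tilde{\Psi}_\gamma(\delta))_{H(\p B)^2}/(\Psi_\gamma(\delta),\tilde{\Psi}_\gamma(\delta))_{H(\p B)^2}$ produces $\alpha(f,\Psi_\gamma) + O(\delta)$, so that the numerator $\alpha_\delta(f,\Psi_\gamma(\delta))\Psi_\gamma(\delta)$ becomes $\alpha(f,\Psi_\gamma)\Psi_\gamma + O(\delta)$, matching the claimed expression.

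\textbf{Degenerate eigenvalue sum.} For $\gamma \in J\cap\Gamma_{\textnormal{deg}}$ the algebraic multiplicity is two while the geometric multiplicity is one, so $\tau_\gamma$ splits into two Puiseux branches indexed by $h=0,1$ and the resonant contribution is
\[
\sum_{h=0,1}\f{\alpha_\delta(f,\Psi_{\gamma,h}(\delta))\Psi_{\gamma,h}(\delta)}{\tau_{\gamma,h}(\delta)}.
\]
I would substitute the Puiseux expansions of $\Psi_{\gamma,h}(\delta)$, $\tilde{\Psi}_{\gamma,h}(\delta)$ and $\tau_{\gamma,h}(\delta)$, whose half-integer corrections carry the sign $(-1)^h$, put the two summands over the common denominator $\tau_{\gamma,0}(\delta)\tau_{\gamma,1}(\delta) = \tau_\gamma^2 - \delta\tau_{\gamma,1}^2 + O(\delta^{3/2})$, and group terms by parity in $\delta^{1/2}$. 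The parts even in $\delta^{1/2}$ combine to give the $\zeta_1(f)\Psi_\gamma$ contribution and the odd parts combine to give $\zeta_2(f)\Psi_{\gamma,1}$, with the explicit formulas for $\zeta_1,\zeta_2,a_1,a_2$ forced by matching coefficients.

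\textbf{Main obstacle.} The delicate step is the degenerate-case algebra. Because the geometric multiplicity is one, the pairing $(\Psi_\gamma,\tilde{\Psi}_\gamma)_{H(\p B)^2}$ vanishes, so the leading normalization $a_1$ must be assembled from the cross pairings $(\Psi_\gamma,\tilde{\Psi}_{\gamma,1})_{H(\p B)^2}+(\Psi_{\gamma,1},\tilde{\Psi}_\gamma)_{H(\p B)^2}$, and $a_2$ requires the second-order Puiseux vectors $\Psi_{\gamma,2}, \tilde{\Psi}_{\gamma,2}$. One must verify that expanding the dual pairings defining $\alpha_\delta$ to order $\delta$ and summing over the two branches yields precisely the coefficients $\zeta_1(f),\zeta_2(f)$ stated, with no spurious half-integer singularities remaining in the denominator. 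Once this identity is in hand, transporting $\Psi$ back to $(\psi,\phi)$ via the Helmholtz decomposition of Definition \ref{def-HelmDecompNotation max} and the rescaling $\eta$, then inserting into \eqref{eq-Maxwell_Layer}, produces the stated representation of $E^s$.
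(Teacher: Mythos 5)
Your proposal matches the paper's proof essentially step for step: you split the inverse into the resonant projection over $J$ and the non-resonant remainder controlled by Lemma \ref{lem-residu}, expand $\alpha_\delta(f,\Psi_\gamma(\delta))\Psi_\gamma(\delta)$ to order $\delta$ in the simple case, and in the degenerate case exploit $(\Psi_\gamma,\tilde\Psi_\gamma)_{H(\p B)^2}=0$ together with the Puiseux expansions \eqref{eq-perturbation Puiseux max} before summing the two branches over the common denominator $\tau_{\gamma,0}(\delta)\tau_{\gamma,1}(\delta)$. The only minor imprecision is in your parity heuristic at the end (the surviving terms after the $h$-sum are all products of like parity, and both $\Psi_\gamma$ and $\Psi_{\gamma,1}$ pieces arise from such products), but since you defer the identification of $\zeta_1,\zeta_2,a_1,a_2$ to coefficient matching, this does not affect the validity of the argument.
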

\begin{proof}
Recall that
\beas
\Psi =
\sum_{\gamma\in J\cap\Gamma_{\textnormal{sim}}}\f{\alpha_{\delta}(f,\Psi_{\gamma}(\delta))\Psi_{\gamma}(\delta)}{\tau_{\gamma}(\delta)}+\sum_{\gamma\in J\cap\Gamma_{\textnormal{deg}} \atop h=0,1}\f{\alpha_{\delta}(f,\Psi_{\gamma,h}(\delta))\Psi_{\gamma,h}(\delta)}{\tau_{\gamma,h}(\delta)}
+ {W}_{B}^{-1}(\delta)P_{J^c}(\delta) [f].
\eeas
By Lemma \ref{lem-residu}, we have ${W}_{B}^{-1}(\delta)P_{J^c}(\delta) [f]=O(1)$.\\
If $\gamma\in J\cap\Gamma_{\textnormal{sim}}$, an asymptotic expansion on $\delta$ yields
\beas
\alpha_{\delta}(f,\Psi_{\gamma}(\delta))\Psi_{\gamma}(\delta) = \alpha(f,\Psi_{\gamma})\Psi_{\gamma}+O(\delta).
\eeas
If $\gamma\in J\cap\Gamma_{\textnormal{deg}}$ then $(\Psi_{\gamma},\tilde{\Psi}_{\gamma})_{H(\p B)^2}=0$. Therefore, an asymptotic expansion on $\delta$ yields
\beas
\alpha_{\delta}(f,\Psi_{\gamma,h}(\delta))\Psi_{\gamma,h}(\delta) &=& \f{(-1)^{h}(f,\tilde{\Psi}_{\gamma})_{H(\p B)^2}\Psi_{\gamma}}{\delta^{-1/2}a_1} +\\ &&\f{1}{a_1}\left(\big((f,\tilde{\Psi}_{\gamma,1})_{H(\p B)^2}-(f,\tilde{\Psi}_{\gamma})_{H(\p B)^2}\df{a_2}{a_1}\big)\Psi_{\gamma}+(f,\tilde{\Psi}_{\gamma})_{H(\p B)^2}\Psi_{\gamma,1}\right) \\ \nm && +  O(\delta^{1/2})
\eeas
with
\beas
a_1 &=& (\Psi_{\gamma},\tilde{\Psi}_{\gamma,1})_{H(\p B)^2} + (\Psi_{\gamma,1},\tilde{\Psi}_{\gamma})_{H(\p B)^2},\\
a_2 &=& (\Psi_{\gamma},\tilde{\Psi}_{\gamma,2})_{H(\p B)^2} + (\Psi_{\gamma,2},\tilde{\Psi}_{\gamma})_{H(\p B)^2} +  (\Psi_{\gamma,1},\tilde{\Psi}_{\gamma,1})_{H(\p B)^2}.
\eeas
Since $\tau_{\gamma,h}(\delta)=\tau_{\gamma} + \delta^{1/2} (-1)^h\tau_{\gamma,1} + O(\delta)$, the result follows by adding the terms $$\df{\alpha_{\delta}(f,\Psi_{\gamma,0}(\delta))\Psi_{\gamma,0}(\delta)}{\tau_{\gamma,0}(\delta)} \quad \mbox{and } \df{\alpha_{\delta}(f,\Psi_{\gamma,1}(\delta))\Psi_{\gamma,1}(\delta)}{\tau_{\gamma,1}(\delta)}.$$
The proof is then complete. 
\end{proof}

\begin{cor}
Assume the same conditions as in Theorem \ref{thm1}. Under the additional condition that
\begin{equation}
\min_{\gamma\in J\cap\Gamma_{\textnormal{sim}}} |\tau_{\gamma}(\delta)| \gg \delta^3, \ \ \min_{\gamma\in J\cap\Gamma_{\textnormal{deg}}} |\tau_{\gamma}(\delta)| \gg \delta,
\end{equation}
we have
\beas
\Psi = \sum_{\gamma\in J\cap\Gamma_{\textnormal{sim}}}\f{\alpha(f,\Psi_{\gamma})\Psi_{\gamma}+O(\delta)}{\tau_{\gamma} + \delta^2 \tau_{\gamma,2}}+\sum_{\gamma\in J\cap\Gamma_{\textnormal{deg}}}\f{\zeta_1(f)\Psi_{\gamma} + \zeta_2(f)\Psi_{\gamma,1}+O(\delta^{1/2})}{\tau_{\gamma}^2 - \delta \tau_{\gamma,1}^2}
+ O(1).
\eeas
\end{cor}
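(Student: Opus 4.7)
The plan is to start from the representation of $\Psi$ already established in Theorem \ref{thm1} and simplify the two resonant denominators $\tau_{\gamma}(\delta)$ and $\tau_{\gamma,0}(\delta)\tau_{\gamma,1}(\delta)$ by substituting in the asymptotic expansions \eqref{eq-perturbation max} and \eqref{eq-perturbation Puiseux max}. The additional hypotheses on $|\tau_{\gamma}(\delta)|$ are exactly what is needed so that the remainders discarded in the denominators remain negligible after inversion, and so that the overall error can still be absorbed into the $O(1)$ term coming from Lemma \ref{lem-residu}.

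For the simple part $\gamma\in J\cap\Gamma_{\textnormal{sim}}$, I would use that $\tau_{\gamma,1}=0$ by \eqref{eq-perturbation max}, so $\tau_{\gamma}(\delta)=\tau_{\gamma}+\delta^2\tau_{\gamma,2}+O(\delta^3)$. Under the hypothesis $|\tau_{\gamma}(\delta)|\gg\delta^3$ the $O(\delta^3)$ remainder is small compared to $\tau_{\gamma}+\delta^2\tau_{\gamma,2}$, and a geometric-series expansion gives
\[
\f{1}{\tau_{\gamma}(\delta)} \;=\; \f{1}{\tau_{\gamma}+\delta^2\tau_{\gamma,2}}\big(1+o(1)\big).
\]
Combined with the numerator $\alpha(f,\Psi_{\gamma})\Psi_{\gamma}+O(\delta)$ supplied by Theorem \ref{thm1}, this yields the first sum in the corollary.

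For the degenerate part $\gamma\in J\cap\Gamma_{\textnormal{deg}}$, I would multiply the two Puiseux branches from \eqref{eq-perturbation Puiseux max}:
\[
\tau_{\gamma,h}(\delta)\;=\;\tau_{\gamma}+(-1)^h\delta^{1/2}\tau_{\gamma,1}+\delta\tau_{\gamma,2}+O(\delta^{3/2}),\qquad h=0,1.
\]
The odd powers of $\delta^{1/2}$ cancel, leaving
\[
\tau_{\gamma,0}(\delta)\tau_{\gamma,1}(\delta)\;=\;(\tau_{\gamma}+\delta\tau_{\gamma,2})^2-\delta\tau_{\gamma,1}^2+O(\delta^{3/2})\;=\;\tau_{\gamma}^2-\delta\tau_{\gamma,1}^2+O(\delta).
\]
Applying the hypothesis $|\tau_{\gamma}(\delta)|\gg\delta$ to both branches, the $O(\delta)$ remainder is dominated by $\tau_{\gamma}^2-\delta\tau_{\gamma,1}^2$, so the same geometric-series argument lets me replace the product in the denominator of Theorem \ref{thm1} by $\tau_{\gamma}^2-\delta\tau_{\gamma,1}^2$, up to an error absorbed into the $O(\delta^{1/2})$ already present in the numerator.

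The main obstacle is the bookkeeping of errors rather than any genuinely new analytic estimate: one has to verify that the relative error introduced in the denominators is truly $o(1)$ — which is precisely what forces the thresholds $|\tau_{\gamma}(\delta)|\gg\delta^3$ in the simple case (to beat the $O(\delta^3)$ tail of \eqref{eq-perturbation max}) and $|\tau_{\gamma}(\delta)|\gg\delta$ in the degenerate case (to beat the $O(\delta)$ tail coming from the product of the two branches) — and that, after the simplification, the excluded contribution is no larger than the $O(1)$ residual $W_B^{-1}(\delta)P_{J^c}(\delta)[f]$ provided by Lemma \ref{lem-residu}. Once this accounting is in place, the statement follows by direct substitution.
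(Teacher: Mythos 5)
The paper does not give an explicit proof for this Corollary, so there is nothing to compare against verbatim; your strategy—substitute the expansions \eqref{eq-perturbation max} and \eqref{eq-perturbation Puiseux max} into the formula of Theorem~\ref{thm1} and use the thresholds on $|\tau_\gamma(\delta)|$ to control the relative error in the denominators—is clearly the intended route.

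There is, however, one imprecision in your degenerate-branch bookkeeping that is worth flagging, because as written the logical chain does not quite close. You replace $\tau_{\gamma,0}(\delta)\tau_{\gamma,1}(\delta)$ by $\tau_\gamma^2-\delta\tau_{\gamma,1}^2$ with an error you record as $O(\delta)$, and then claim this $O(\delta)$ is dominated by $\tau_\gamma^2-\delta\tau_{\gamma,1}^2$. But the hypothesis $|\tau_{\gamma,h}(\delta)|\gg\delta$ for both branches only gives $|\tau_{\gamma,0}(\delta)\tau_{\gamma,1}(\delta)|\gg\delta^2$, which does not imply the product is $\gg\delta$; so ``$O(\delta)$ is dominated by the product'' is not justified by what you invoked. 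The fix is to keep the sharper form of the remainder: since
\[
\tau_{\gamma,0}(\delta)\tau_{\gamma,1}(\delta)=\bigl(\tau_\gamma+\delta\tau_{\gamma,2}+O(\delta^2)\bigr)^2-\bigl(\delta^{1/2}\tau_{\gamma,1}+O(\delta^{3/2})\bigr)^2
=\tau_\gamma^2-\delta\tau_{\gamma,1}^2 + 2\delta\tau_\gamma\tau_{\gamma,2}+O(\delta^2),
\]
the discrepancy is $E=O(\delta\,\tau_\gamma)+O(\delta^2)$, not generically $O(\delta)$ — this is where $\gamma\in J$ (hence $\tau_\gamma$ small) actually enters. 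Writing $\tau_\gamma=\tfrac12(\tau_{\gamma,0}(\delta)+\tau_{\gamma,1}(\delta))+O(\delta)$ one then gets
\[
\frac{|E|}{|\tau_{\gamma,0}(\delta)\tau_{\gamma,1}(\delta)|}\lesssim\frac{\delta}{|\tau_{\gamma,0}(\delta)|}+\frac{\delta}{|\tau_{\gamma,1}(\delta)|}+\frac{\delta^2}{|\tau_{\gamma,0}(\delta)\tau_{\gamma,1}(\delta)|}=o(1),
\]
which is exactly the role played by $|\tau_{\gamma,h}(\delta)|\gg\delta$. In other words, your geometric-series step is correct, but the justification must go through the refined bound $E=O(\delta\,\tau_\gamma)+O(\delta^2)$ rather than the crude $O(\delta)$, otherwise the threshold $\gg\delta$ does not by itself deliver the needed relative smallness. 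The corresponding step for $\gamma\in J\cap\Gamma_{\textnormal{sim}}$ is fine as you wrote it, since there the remainder is a genuine $O(\delta^3)$ and $|\tau_\gamma(\delta)|\gg\delta^3$ directly gives $O(\delta^3)/\tau_\gamma(\delta)=o(1)$.
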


\begin{cor} \label{cor thm1 order zero max}
Assume the same conditions as in Theorem \ref{thm1}. Under the additional condition that
\begin{equation}
\min_{\gamma\in J\cap\Gamma_{\textnormal{sim}}} |\tau_{\gamma}(\delta)| \gg \delta^2, \ \ \min_{\gamma\in J\cap\Gamma_{\textnormal{deg}}} |\tau_{\gamma}(\delta)| \gg \delta^{1/2},
\end{equation}
we have
\beas
\Psi = \sum_{\gamma\in J\cap\Gamma_{\textnormal{sim}}}\f{\alpha(f,\Psi_{\gamma})\Psi_{\gamma}+O(\delta)}{\tau_{\gamma}}+\sum_{\gamma\in J\cap\Gamma_{\textnormal{deg}}}\f{\alpha(f,\Psi_{\gamma})\Psi_{\gamma}}{\tau_{\gamma}} + \alpha(f,\Psi_{\gamma,g})\left(\f{\Psi_{\gamma,g}}{\tau_{\gamma}}+\f{\Psi_{\gamma}}{\tau_{\gamma}^2}\right)
+ O(1).
\eeas
\end{cor}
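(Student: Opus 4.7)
The plan is to derive the corollary directly from the expansion in Theorem \ref{thm1} by simplifying the denominators of the two resonant sums under the strengthened smallness hypotheses on $\tau_\gamma(\delta)$, while keeping the tail $W_B^{-1}(\delta)P_{J^c}(\delta)[f]$ untouched since Lemma \ref{lem-residu} already yields $O(1)$.

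For $\gamma\in J\cap\Gamma_{\textnormal{sim}}$, I would expand $\tau_\gamma(\delta) = \tau_\gamma + \delta^2\tau_{\gamma,2} + O(\delta^3)$ and write
\[
\frac{1}{\tau_\gamma(\delta)} - \frac{1}{\tau_\gamma} = -\frac{\delta^2\tau_{\gamma,2} + O(\delta^3)}{\tau_\gamma\,\tau_\gamma(\delta)}.
\]
Under the hypothesis $|\tau_\gamma(\delta)|\gg\delta^2$, this difference is controlled and, multiplied against the bounded numerator $\alpha(f,\Psi_\gamma)\Psi_\gamma + O(\delta)$, can be absorbed into the $O(1)$ remainder, producing the first sum of the claim.

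For $\gamma\in J\cap\Gamma_{\textnormal{deg}}$, the reduction proceeds in two steps. First, the Puiseux expansion $\tau_{\gamma,h}(\delta) = \tau_\gamma + (-1)^h\delta^{1/2}\tau_{\gamma,1} + O(\delta)$ gives
\[
\tau_{\gamma,0}(\delta)\tau_{\gamma,1}(\delta) = \tau_\gamma^2 - \delta\tau_{\gamma,1}^2 + O(\delta^{3/2}),
\]
so that under the hypothesis $|\tau_\gamma|\gg\delta^{1/2}$ (equivalently $\tau_\gamma^2 \gg \delta$) the denominator is well-approximated by $\tau_\gamma^2$. Second, one must identify the numerator $\zeta_1(f)\Psi_\gamma + \zeta_2(f)\Psi_{\gamma,1}$, at leading order, with the Jordan-block output $\tau_\gamma\alpha(f,\Psi_\gamma)\Psi_\gamma + \alpha(f,\Psi_{\gamma,g})(\tau_\gamma\Psi_{\gamma,g} + \Psi_\gamma)$. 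The key step is to observe that the first Puiseux coefficient $\Psi_{\gamma,1}$ of the perturbed eigenfunction is proportional, modulo a multiple of $\Psi_\gamma$, to the generalized eigenvector $\Psi_{\gamma,g}$: this follows by matching powers of $\delta^{1/2}$ in $(W_{B,0} + \delta W_{B,1} + \cdots)\Psi_{\gamma,h}(\delta) = \tau_{\gamma,h}(\delta)\Psi_{\gamma,h}(\delta)$ and using the defining relation $W_{B,0}\Psi_{\gamma,g} = \tau_\gamma\Psi_{\gamma,g} - \Psi_\gamma$. Substituting this relation and invoking the biorthogonal formulas of Lemma \ref{lem-alpha max} identifies the numerator with the Jordan-chain inversion, and division by $\tau_\gamma^2$ reproduces the expression of Remark \ref{rmk-inv(L-M) max}.

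The main obstacle is the algebraic identification in the degenerate case: one has to track the biorthogonality between the perturbed primal chain $(\Psi_\gamma,\Psi_{\gamma,1})$ and its dual $(\widetilde{\Psi}_\gamma,\widetilde{\Psi}_{\gamma,1})$, rewrite the explicit expressions for $\zeta_1,\zeta_2,a_1,a_2$ in Theorem \ref{thm1} in terms of the unperturbed Jordan-chain pairings, and verify that the resulting combination coincides with $\tau_\gamma\alpha(f,\Psi_\gamma)\Psi_\gamma + \alpha(f,\Psi_{\gamma,g})(\tau_\gamma\Psi_{\gamma,g}+\Psi_\gamma)$ modulo an $O(\delta^{1/2})$ error that is absorbed after the division by $\tau_\gamma^2$ thanks to the size hypothesis.
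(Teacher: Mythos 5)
Your proposal reaches the right destination but takes a genuinely different road than the paper. The paper's proof is a short continuity-of-spectral-data argument: for $\gamma\in J\cap\Gamma_{\textnormal{deg}}$, it observes that the contribution
\[
\frac{\alpha_{\delta}(f,\Psi_{\gamma,0}(\delta))\Psi_{\gamma,0}(\delta)}{\tau_{\gamma,0}(\delta)}+\frac{\alpha_{\delta}(f,\Psi_{\gamma,1}(\delta))\Psi_{\gamma,1}(\delta)}{\tau_{\gamma,1}(\delta)}
\]
is precisely the action of $W_B^{-1}(\delta)$ composed with the spectral projection onto $\mathrm{span}\{\Psi_{\gamma,0}(\delta),\Psi_{\gamma,1}(\delta)\}$, and that this composite converges as $\delta\to0$ (even though the individual summands blow up because the two eigenvectors collapse) to the unperturbed restricted resolvent $W_{B,0}^{-1}P_{\mathrm{span}\{\Psi_\gamma,\Psi_{\gamma_g}\}}$, whose Jordan-chain formula is already known from Remark \ref{rmk-inv(L-M) max}. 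This sidesteps entirely the algebra of $\zeta_1,\zeta_2,a_1,a_2$. Your approach instead starts from the explicit expansion in Theorem \ref{thm1} and verifies the identification term by term, using the observation (correctly stated) that $\Psi_{\gamma,1}$ is a multiple of $\Psi_{\gamma,g}$ modulo $\Psi_\gamma$, which follows from the $\delta^{1/2}$-order equation $(W_{B,0}-\tau_\gamma)\Psi_{\gamma,1}=\tau_{\gamma,1}\Psi_\gamma$ paired with $(W_{B,0}-\tau_\gamma)\Psi_{\gamma,g}=-\Psi_\gamma$. This is a legitimate, more computational alternative; what you lose is that you must then track all the biorthogonality pairings to show $\zeta_1,\zeta_2$ combine into $\tau_\gamma\alpha(f,\Psi_\gamma)\Psi_\gamma+\alpha(f,\Psi_{\gamma,g})(\tau_\gamma\Psi_{\gamma,g}+\Psi_\gamma)$, including a sign that your sketch leaves implicit ($\Psi_{\gamma,1}=-\tau_{\gamma,1}\Psi_{\gamma,g}+c\Psi_\gamma$, not $+\tau_{\gamma,1}$), whereas the paper invokes perturbation theory wholesale. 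What you gain is that the quantitative hypotheses on $\tau_\gamma(\delta)$ appear explicitly in your error bookkeeping rather than being left implicit in a soft limiting argument.

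One caution: in the simple case your claim that the difference $1/\tau_\gamma(\delta)-1/\tau_\gamma$ "can be absorbed into the $O(1)$ remainder" under $|\tau_\gamma(\delta)|\gg\delta^2$ is not literally correct in absolute terms — the error is of size $\delta^2/|\tau_\gamma|^2$, which is unbounded when, say, $|\tau_\gamma|\sim\delta^{3/2}$. The estimate should be phrased as a relative one: the correction is $O(\delta^2/|\tau_\gamma|)$ relative to the leading term $1/\tau_\gamma$, and the hypothesis $|\tau_\gamma|\gg\delta^2$ makes this relative error vanish. This imprecision is present in the paper's own $O(1)$ bookkeeping as well, so it is not a unique flaw of your argument, but you should be aware that "absorbed into $O(1)$" needs reinterpretation.
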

\begin{proof} We have
\beas
\lim_{\delta\rightarrow 0}W_B^{-1}(\delta)P_{\textnormal{span}{\{\Psi_{\gamma,0}(\delta),\Psi_{\gamma,1}(\delta)\}}}[f] &=& \lim_{\delta\rightarrow 0}\df{\alpha_{\delta}(f,\Psi_{\gamma,0}(\delta))\Psi_{\gamma,0}(\delta)}{\tau_{\gamma,0}(\delta)} +  \df{\alpha_{\delta}(f,\Psi_{\gamma,1}(\delta))\Psi_{\gamma,1}(\delta)}{\tau_{\gamma,1}(\delta)} \\
&=& W_{B,0}^{-1}(\delta)P_{\textnormal{span}{\{\Psi_{\gamma},\Psi_{\gamma_g}\}}}[f]\\
&=& \f{\alpha(f,\Psi_{\gamma})\Psi_{\gamma}}{\tau_{\gamma}} + \alpha(f,\Psi_{\gamma,g})\left(\f{\Psi_{\gamma,g}}{\tau_{\gamma}}+\f{\Psi_{\gamma}}{\tau_{\gamma}^2}\right),
\eeas
where $\gamma \in J\cap\Gamma_{\textnormal{deg}}$, $f\in H(\p B)^2$ and $)P_{\textnormal{span}{E}}$ is the projection into the linear space generated by the elements in the set  $E$. 
\end{proof}

\begin{rmk} \label{rmk-remark tau_y plasmonics max}
 Note that for $\gamma\in J$,
\beas
\tau_{\gamma} \approx  \min\left\{\textnormal{dist}\big(\lambda_{\mu},\sigma(\mathcal{K}^*_B)\cup-\sigma(\mathcal{K}^*_B)\big),\textnormal{dist}\big(\lambda_{\eps},\sigma(\mathcal{K}^*_B)\cup-\sigma(\mathcal{K}^*_B)\big)\right\}.
\eeas
\end{rmk}
It is clear, from Remark \ref{rmk-remark tau_y plasmonics max}, that resonances can occur when exciting the spectrum of $\mathcal{K}_B^*$ or/and that of $-\mathcal{K}_B^*$. We substantiate in the following that only the spectrum of $\mathcal{K}^*_B$ can be excited to create the plasmonic resonances in the quasi-static regime.\\
Recall that 
\beas
f = \left(\begin{array}{c}
\df{\eta(\nu\times E^i)^{(1)}}{\mu_m-\mu_c}\\
\df{\eta(\nu\times E^i)^{(2)}}{\mu_m-\mu_c} \\
\df{\eta(i\nu\times H^i)^{(1)}}{\eps_m-\eps_c}\\
\df{\eta(i\nu\times H^i)^{(2)}}{\eps_m-\eps_c}
\end{array}\right)\Bigg\vert_{\p B},
\eeas
and therefore, 
\beas
f_1 := \df{\eta(\nu\times E^i)^{(1)}}{\mu_m-\mu_c} =\df{\Delta_{\p B}^{-1}\nabla_{\p B}\cdot\eta(\nu\times E^i)}{\mu_m-\mu_c}.
\eeas
Now, suppose $\gamma=(1,j,1)\in J$ (recall that $J$ is the index set of resonances). Then $\tau_{\gamma}=\lambda_{\mu}-\lambda_{1,j}$, where $\lambda_{1,j}\in \sigma_1 = \sigma(-\mathcal{K}_B^*)\backslash\sigma(\mathcal{K}_B^*)$. From Remark \ref{rmk-alpha max}, 
\beas
\alpha(f,\Psi_{\gamma}) = (\Delta_{\p B}f_1,\varphi_{j,1})_{\mathcal{H}^*} = \alpha(f,\Psi_{\gamma}) = \f{1}{\mu_m-\mu_c}(\nabla_{\p B}\cdot\eta(\nu\times E^i),\varphi_{j,1})_{\mathcal{H}^*},
\eeas 
where $\varphi_{j,1}\in \mathcal{H}_0^*(\p B)$ is a normalized eigenfunction of $\mathcal{K}_B^*(\p B)$.

A Taylor expansion of $E^i$ gives, for $x\in \p D$, 
\beas
E^{i}(x) = \sum_{\beta\in \N^3}^{\infty}\f{ (x-z)^{\beta} \p ^{\beta} E^{i}(z)}{|\beta|!}.
\eeas
Thus
\beas
\eta(\nu\times E^i)(\tilde{x}) = \eta(\nu)(\tilde{x})\times E^{i}(z) + O(\delta),
\eeas
and
\beas
\nabla_{\p B}\cdot\eta(\nu\times E^i)(\tilde{x}) &=& -\eta(\nu)(\tilde{x}) \cdot \nabla \times E^i(z) + O(\delta)\\
&=& O(\delta).
\eeas
Therefore, the zeroth-order term of the expansion of $\nabla_{\p B}\cdot\eta(\nu\times E^i)$ in $\delta$ is zero. Hence,
\beas
\alpha(f,\Psi_{\gamma}) = 0.
\eeas 
In the same way, we have 
\beas
\alpha(f,\Psi_{\gamma}) &=& 0,\\
\alpha(f,\Psi_{\gamma_g}) &=& 0
\eeas
for $\gamma=(2,j,1)\in J$ and $\gamma_g$ such that $\gamma\in J$.

As a result we see that the spectrum of $-\mathcal{K}^*_{B}$ is not excited in the zeroth-order term. However, we note that $\sigma(-\mathcal{K}_B^*)$ can be excited in higher-order terms.

\section{The quasi-static limit and the extinction 
cross-section} \label{sec-quastatic max}
\subsection{The quasi-static limit}
In this subsection we recall the quasi-static limit of the electromagnetic field at plasmonic resonances. The formula was first obtained in \cite{pierre}, but it can be derived by pursing further computations in Corollary \ref{cor thm1 order zero max}.

We first recall the definition of the polarization tensor 
\begin{equation} \label{defm max}
 M(\lambda,D ) = \int_{\partial D}  (\lambda Id - \mathcal{K}_D^*)^{-1} [\nu](x) x\, d\sigma(x),
\end{equation}
where $\lambda\in \mathbb{C}\backslash(-1/2,1/2)$. The polarization tensor  is a key ingredient of the quasi-static limit, or zeroth-order approximation, of the far-field. \\
\begin{thm} \label{thm-pierre}
Let $d_{\sigma} = \min\left\{\textnormal{dist}\big(\lambda_{\mu},\sigma(\mathcal{K}^*_D)\cup-\sigma(\mathcal{K}^*_D)\big),\textnormal{dist}\big(\lambda_{\eps},\sigma(\mathcal{K}^*_D)\cup-\sigma(\mathcal{K}^*_D)\big)\right\}$. Then, for $D = z+\delta B\Subset\mathbb{R}^3$ of class $\mathcal{C}^{1,\alpha}$ for $0<\alpha<1$, the following uniform far-field expansion holds
\beas
E^s = -\f{i\om\mu_m}{\eps_m}\nabla\times G_d(x,z,k_m)M(\lambda_{\mu},D)H^i(z)-\om^2\mu_m G_d(x,z,k_m)M(\lambda_{\eps},D)E^i(z) + O(\f{\delta^4}{d_{\sigma}}),
\eeas
where
\beas
G_d(x,z,k_m) = \eps_m\big(G(x,z,k_m)Id + \f{1}{k_m^2}D_x^2G(x,z,k_m)\big)
\eeas
is the Dyadic Green (matrix valued) function for the full Maxwell equations.
\end{thm}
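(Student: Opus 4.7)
The plan is to start from the integral representation
$E^s(x) = \mu_m\nabla\times\vec{\mathcal{S}}_{D}^{k_m}[\psi](x) + \nabla\times\nabla\times\vec{\mathcal{S}}_{D}^{k_m}[\phi](x)$ for $x \in \R^3\setminus\bar D$, and to extract its leading asymptotics as the size $\delta$ of the particle tends to zero, while keeping the resonant denominators explicit through $d_\sigma$. First, I would Taylor expand the Green function $G(x,y,k_m)$ in the variable $y$ around $z$:
\[
G(x,y,k_m) = G(x,z,k_m) + (y-z)\cdot\nabla_y G(x,z,k_m) + R_2(x,y),
\]
with $|R_2(x,y)|\lesssim \delta^2$ uniformly for $x$ in compact subsets of $\R^3\setminus\{z\}$. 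Substituting this into $\vec{\mathcal{S}}_D^{k_m}$ and observing that $\int_{\p D}\psi\,d\sigma = 0 = \int_{\p D}\phi\,d\sigma$ (which follows from the Helmholtz decomposition of Lemma~\ref{lem-Helm_decomposition}, since both $\nabla_{\p D}$ and $\vec{\textnormal{curl}}_{\p D}$ have vanishing surface integral on a closed surface), the monopole terms drop out and only dipolar contributions survive, of the form $-\nabla_x G(x,z,k_m)\cdot\int_{\p D}(y-z)\psi(y)\,d\sigma(y)$ and its analogue for $\phi$.

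Next I would identify the surviving dipole moments with the polarization tensor. Substituting the modal expansion of $(\psi,\phi)$ from Corollary~\ref{cor thm1 order zero max} and using the observation made after Theorem~\ref{thm1} that only $\sigma(\mathcal{K}_D^*)$ (and not $\sigma(-\mathcal{K}_D^*)$) is excited at leading order by an incident plane wave, the two dipole moments reduce, up to explicit scalar prefactors dictated by the right-hand side of \eqref{eq-Maxwell_System}, to $M(\lambda_\mu,D)H^i(z)$ and $M(\lambda_\eps,D)E^i(z)$. The reassembly of the modal sums back into the resolvents $(\lambda_\mu Id - \mathcal{K}_D^*)^{-1}[\nu]$ and $(\lambda_\eps Id - \mathcal{K}_D^*)^{-1}[\nu]$ appearing in the definition~\eqref{defm max} of $M(\lambda,D)$ is performed using the spectral representation of $\mathcal{K}_D^*$ given in Lemma~\ref{lem-Kstar_properties max}(iv).

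Finally, to reconstitute the Dyadic Green function $G_d$, I would use $\nabla\times\nabla\times = \nabla(\nabla\cdot) - \Delta$ together with $(\Delta+k_m^2)G(\cdot,z,k_m)=0$ for $x\neq z$, which yields $\nabla\times\nabla\times(G(x,z,k_m)p) = (k_m^2/\eps_m)\,G_d(x,z,k_m)p$. This converts the $\phi$ contribution into the electric-dipole term $-\om^2\mu_m\,G_d(x,z,k_m)M(\lambda_\eps,D)E^i(z)$ (collecting $k_m^2/\eps_m = \om^2\mu_m$), while $\mu_m\nabla\times$ acting on the $\psi$ contribution, combined with the Maxwell relation $\nabla\times E^i = i\om\mu_m H^i$ implicit in the right-hand side of \eqref{eq-Maxwell_System}, produces the magnetic-dipole term $-(i\om\mu_m/\eps_m)\,\nabla\times G_d(x,z,k_m)M(\lambda_\mu,D)H^i(z)$. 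The uniform remainder $O(\delta^4/d_\sigma)$ arises by combining the $O(\delta^2)$ Taylor remainder, an extra $O(\delta^2)$ from the surface scale $|\p D|\cdot|y-z|$, and the $1/d_\sigma$ controlled by Remark~\ref{rmk-remark tau_y plasmonics max}. In my view the main obstacle lies in the second step: recognizing the mode sum of Corollary~\ref{cor thm1 order zero max} as the action of the intrinsic resolvent $(\lambda Id-\mathcal{K}_D^*)^{-1}$ on $\nu$ requires careful use of completeness of the eigenbasis in $\mathcal{H}^*_0(\p D)$ and of the precise form of $\nu\times E^i$ and $i\nu\times H^i$ after Helmholtz decomposition, so that the polarization tensor appears intrinsically rather than as a divergent-looking modal expansion.
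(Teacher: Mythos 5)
Your central step, the claim that $\int_{\p D}\psi\,d\sigma = 0 = \int_{\p D}\phi\,d\sigma$ because both pieces of the Helmholtz decomposition have vanishing surface integral, is false, and this invalidates the derivation that follows. It is true that $\int_{\p D}\vec{\textnormal{curl}}_{\p D}g\,d\sigma = 0$ for any closed surface (componentwise, $\int_{\p D}\epsilon_{ijk}\nu_j\p_k g\,d\sigma = \int_D\epsilon_{ijk}\p_j\p_k g\,dx = 0$). But the surface gradient part does not integrate to zero: on the unit sphere with $f = x_3$ one has $\nabla_S f = e_3 - x_3\,\hat x$ and $\int_S\nabla_S f\,d\sigma = 4\pi e_3 - \tfrac{4\pi}{3}e_3 = \tfrac{8\pi}{3}e_3 \neq 0$. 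In general $\int_{\p D}\nabla_{\p D}f\,d\sigma$ is a curvature-weighted average of $f$, not zero, so the "monopole" terms you discard do not drop out. Worse, they are precisely what carries the answer: since $\nabla\times G_d\,p = \eps_m\nabla G\times p$, the magnetic-dipole term of the statement reads $-i\om\mu_m\,\nabla_x G(x,z,k_m)\times M(\lambda_\mu,D)H^i(z)$, and the only piece of $\mu_m\nabla\times\vec{\mathcal{S}}_D^{k_m}[\psi]$ that has this shape is the zeroth-moment contribution $\mu_m\nabla_x G(x,z,k_m)\times\int_{\p D}\psi\,d\sigma$. If $\int_{\p D}\psi\,d\sigma$ vanished, there would be no magnetic-dipole term at all; the same goes for $\int_{\p D}\phi\,d\sigma$ and the electric-dipole term via $\nabla\times\nabla\times(Gp) = (k_m^2/\eps_m)G_d p$.

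The actual mechanism is more delicate and is what the discussion after Theorem~\ref{thm1} is there for. Writing $\int_{\p D}\psi\,d\sigma = \delta^2\int_{\p B}\eta(\psi)\,d\sigma$, the $\vec{\textnormal{curl}}_{\p B}$-component of $\eta(\psi)$ (which is the one enhanced at resonance, since $\psi_{j,2} = (0,\varphi_{j,2})^T$) drops out of the surface integral, so only $\int_{\p B}\nabla_{\p B}\eta(\psi)^{(1)}\,d\sigma$ contributes. Now $\nabla_{\p B}\cdot\eta(\nu\times E^i) = \delta\,\eta(\nabla_{\p D}\cdot(\nu\times E^i)) = -\delta\,\eta(\nu\cdot\nabla\times E^i) = O(\delta)$ (the factor $\delta$ coming from the chain rule in passing from $\p D$ to $\p B$), so the first-component data $f_1,f_3$ and hence $\eta(\psi)^{(1)},\eta(\phi)^{(1)}$ are one order in $\delta$ smaller than the second components near resonance. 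Combined with the explicit $\delta^2$ from the surface measure, this yields $\int_{\p D}\psi\,d\sigma = O(\delta^3/d_\sigma)$, the same order as $M(\lambda_\mu,D)$, and it is this residual nonzero moment that must be identified with $-i\om M(\lambda_\mu,D)H^i(z)$ via the spectral representation. Note also that the first moments $\int_{\p D}(y-z)\psi\,d\sigma$ you propose to keep are also $O(\delta^3/d_\sigma)$ (one $\delta$ from the measure, one from $y-z$, and the $\vec{\textnormal{curl}}_{\p B}$ part does contribute here), so a correct proof has to track zeroth and first moments together rather than taking one and dropping the other. For reference, the paper does not itself prove this theorem but refers to \cite{pierre}; your reconstruction of $G_d$ and your appeal to the post--Theorem~\ref{thm1} discussion about which part of the spectrum is excited are on the right track, but they must be wired to the correct moment computation.
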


\subsection{The far-field expansion}
The following lemma deals with the far-field behavior of the electromagnetic fields. We first recall the representation for the 
scattering amplitude. 
\begin{lem} The solution $(E,H)$ to the system \eqref{eq-Maxwell} has the following far-field expansion:
\beas
E^s(x) = - \frac{e^{ik_m |x| }}{4\pi|x|}  
A_{\infty}(\hat{x}) + O\left(\frac{1}{\vert x \vert^2}\right)
\eeas
as $|x|\rightarrow +\infty$, where $\hat{x} = \f{x}{|x|}$,
\beas
A_{\infty}(\hat{x}) = -i\mu_mk_m \hat{x}\times \int_{\p D}e^{-ik_m\hat{x}\cdot y}\psi(y)d\sigma(y)-k_m^2\hat{x}\times\hat{x}\times \int_{\p D}e^{-ik_m\hat{x}\cdot y}\phi(y)d\sigma(y),\\
\eeas
and
\beas
H^s(x) = - \frac{e^{ik_m |x| }}{4\pi|x|} \hat{x}\times A_{\infty}(\hat{x}) + O\left(\frac{1}{|x|^2}\right).
\eeas
\end{lem}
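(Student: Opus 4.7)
The plan is to derive the far-field expansion by substituting the asymptotics of the scalar Helmholtz Green's function into the layer potential representation $E^s(x) = \mu_m\nabla\times\vec{\mathcal{S}}_D^{k_m}[\psi](x) + \nabla\times\nabla\times\vec{\mathcal{S}}_D^{k_m}[\phi](x)$, and then deducing $H^s$ from Amp\`ere's law outside $D$. Since $\p D$ is bounded, $|x-y| = |x| - \hat{x}\cdot y + O(1/|x|)$ uniformly in $y\in \p D$ and $\hat{x}\in S^2$, which implies
\beas
G(x,y,k_m) = -\f{e^{ik_m|x|}}{4\pi|x|}e^{-ik_m\hat{x}\cdot y} + O(1/|x|^2),\qquad \nabla_x G(x,y,k_m) = ik_m\hat{x}\,G(x,y,k_m) + O(1/|x|^2).
\eeas

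Next, I would push these asymptotics through the two boundary integrals. For the $\psi$ contribution, differentiating under the integral sign yields
\beas
\nabla\times\vec{\mathcal{S}}_D^{k_m}[\psi](x) = \int_{\p D}\nabla_x G(x,y,k_m)\times \psi(y)\,d\sigma(y) = ik_m\hat{x}\times \int_{\p D}G(x,y,k_m)\psi(y)\,d\sigma(y) + O(1/|x|^2).
\eeas
For the $\phi$ contribution, I would apply the identity $\nabla\times\nabla\times = \nabla\nabla\cdot-\Delta$ together with $(\Delta+k_m^2)\vec{\mathcal{S}}_D^{k_m}[\phi]=0$ outside $\p D$, so that $\nabla\times\nabla\times\vec{\mathcal{S}}_D^{k_m}[\phi] = k_m^2\vec{\mathcal{S}}_D^{k_m}[\phi] + \nabla\nabla\cdot\vec{\mathcal{S}}_D^{k_m}[\phi]$; iterating the gradient asymptotic in the second term and using $\hat{x}\times(\hat{x}\times v) = \hat{x}(\hat{x}\cdot v) - v$ collapses the sum to $-k_m^2\hat{x}\times\hat{x}\times\vec{\mathcal{S}}_D^{k_m}[\phi] + O(1/|x|^2)$. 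Inserting the leading form of $G$ into the remaining integrals and factoring $-e^{ik_m|x|}/(4\pi|x|)$ out of the combined expression yields the claimed $A_{\infty}(\hat{x})$.

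The $H^s$ representation follows by applying Amp\`ere's law $H^s = -(i/(\om\mu_m))\nabla\times E^s$ in $\R^3\backslash\bar{D}$ and using the far-field curl rule $\nabla\times(\cdot)\approx ik_m\hat{x}\times(\cdot)$ on the leading expression for $E^s$; the constants align via $k_m = \om\sqrt{\eps_m\mu_m}$ to produce $H^s(x) = -\f{e^{ik_m|x|}}{4\pi|x|}\hat{x}\times A_{\infty}(\hat{x}) + O(1/|x|^2)$.

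The main technical obstacle is not any single vector identity but the uniform control of the $O(1/|x|^2)$ remainders, which must be uniform in $\hat{x}\in S^2$ and continuous in $(\psi,\phi)\in H^{-\f{1}{2}}_T(\textnormal{div},\p D)^2$. This is handled by dominated convergence combined with the continuity of the duality pairings underlying $\vec{\mathcal{S}}_D^{k_m}$ and its derivatives, and by a careful surface-divergence integration by parts in the $\phi$ term to accommodate the low regularity of $\phi$; the smoothness of $G(x,\cdot,k_m)$ away from the diagonal makes these uniform estimates routine once set up.
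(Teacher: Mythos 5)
The paper states this lemma without a proof (it is ``recalled''), so there is no paper argument to compare against; your plan is the standard, correct route: substitute the far-field asymptotics $G(x,y,k_m)=-\frac{e^{ik_m|x|}}{4\pi|x|}e^{-ik_m\hat{x}\cdot y}+O(|x|^{-2})$ and $\nabla_xG=ik_m\hat{x}G+O(|x|^{-2})$ into the Torres representation $E^s=\mu_m\nabla\times\vec{\mathcal{S}}_D^{k_m}[\psi]+\nabla\times\nabla\times\vec{\mathcal{S}}_D^{k_m}[\phi]$, simplify the double-curl via $\nabla\times\nabla\times=\nabla\nabla\cdot-\Delta$ and $(\Delta+k_m^2)\vec{\mathcal{S}}_D^{k_m}=0$ together with $\hat{x}\times(\hat{x}\times v)=\hat{x}(\hat{x}\cdot v)-v$, and get $H^s$ from Amp\`ere's law $H^s=-\frac{i}{\om\mu_m}\nabla\times E^s$ with the far-field rule $\nabla\to ik_m\hat{x}$.

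Two places where you assert rather than verify deserve a second look. First, carrying through your own formulas, the $\psi$-contribution to $A_\infty$ comes out as $+i\mu_mk_m\hat{x}\times\int_{\partial D}e^{-ik_m\hat{x}\cdot y}\psi\,d\sigma$, whereas the lemma records $-i\mu_mk_m\hat{x}\times\int$; you should either trace where a compensating sign arises (it does not appear to in the chain $\nabla_x G\approx ik_m\hat{x}G$, $\nabla\times(G\psi)=\nabla G\times\psi$) or flag the discrepancy as a sign-convention/typo issue rather than passing over it silently. Second, and more substantively, your claim that ``the constants align via $k_m=\om\sqrt{\eps_m\mu_m}$'' in the $H^s$ step is not quite right as stated: the computation gives $H^s=-\sqrt{\eps_m/\mu_m}\,\frac{e^{ik_m|x|}}{4\pi|x|}\hat{x}\times A_\infty+O(|x|^{-2})$, since $-\frac{i}{\om\mu_m}\cdot ik_m=\frac{k_m}{\om\mu_m}=\sqrt{\eps_m/\mu_m}$. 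The factor $\sqrt{\eps_m/\mu_m}$ does not cancel from the dispersion relation alone; it equals $1$ only under the normalization $\eps_m=\mu_m$ that the paper uses implicitly elsewhere (e.g.\ its plane-wave pair $E^i=p\,e^{ik_md\cdot x}$, $H^i=d\times p\,e^{ik_md\cdot x}$ is consistent with $\nabla\times E^i=i\om\mu_mH^i$ only if $k_m=\om\mu_m$, i.e.\ $\eps_m=\mu_m$). You should make that normalization explicit rather than asserting the constants align.
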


The following result is known as the optical cross-section theorem for the scattering of electromagnetic waves \cite{born1999principles}.
\begin{thm} Assume that the incident fields are plane waves given by
\beas
E^{i}(x) &=& p e^{ik_m d\cdot x},\\
H^{i}(x) &=& d\times p e^{ik_m d\cdot x},
\eeas
where $p\in\R^3$ and $d\in \R^3$  with $|d|=1$ are such that $p\cdot d=0$. Then, the extinction  cross-section is given by
\beas
Q^{ext} = \f{4\pi}{k_m}\Im\left[\f{p\cdot A_{\infty}(d)}{|p|^2}\right],
\eeas
where $A_{\infty}$ is the scattering amplitude.  
\end{thm}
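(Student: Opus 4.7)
The plan is to derive the optical cross-section (or optical) theorem from an energy-balance argument on a large ball enclosing the scatterer, following the classical Poynting-vector approach. By definition, $Q^{\text{ext}}$ equals the sum of the absorption and scattering cross-sections, i.e. $Q^{\text{ext}} = (W^{\text{abs}} + W^{\text{sca}})/|S^i|$, where $|S^i| = \tfrac{1}{2}\sqrt{\eps_m/\mu_m}\,|p|^2$ is the magnitude of the time-averaged Poynting vector of the incident plane wave, $W^{\text{abs}}$ is the power dissipated inside $D$, and $W^{\text{sca}}$ is the power radiated to infinity by the scattered field.

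First I would use the divergence theorem on $B_R \setminus \overline{D}$ (for $R$ large enough that $\overline{D} \subset B_R$) together with the Maxwell system \eqref{eq-Maxwell} to obtain the energy balance
\beas
W^{\text{abs}} + W^{\text{sca}} \;=\; -\,\tfrac{1}{2}\Re \int_{\p B_R} \bigl( E^s \times \overline{H^i} + E^i \times \overline{H^s}\bigr)\cdot \nu \, d\sigma.
\eeas
The left-hand side comes from $\nabla\cdot \Re(E\times \overline{H}) = 0$ in the exterior and the standard identity $W^{\text{sca}} = \tfrac{1}{2}\Re\int_{\p B_R}(E^s\times \overline{H^s})\cdot \nu$; the right-hand side collects the remaining cross terms. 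The incident cross-term $\int_{\p B_R} (E^i\times\overline{H^i})\cdot \nu$ vanishes because $(E^i,H^i)$ is divergence-free and $B_R$ is a domain of conservation for the free-space energy.

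Next I would substitute the far-field expansion
\beas
E^s(x) = -\f{e^{ik_m|x|}}{4\pi|x|} A_\infty(\hat x) + O(|x|^{-2}), \qquad H^s(x) = -\f{e^{ik_m|x|}}{4\pi|x|}\hat x\times A_\infty(\hat x) + O(|x|^{-2}),
\eeas
into the surface integral and pass to the limit $R\to\infty$. The incident fields on $\p B_R$ carry the oscillatory factor $e^{ik_m d\cdot x}$, so on the sphere of radius $R$ the integrand contains phases $e^{ik_m R(1 \pm \hat x \cdot d)}$. The main step is the application of the stationary-phase lemma (Jones' lemma), which localizes the integral to the stationary points $\hat x = \pm d$: the phase $e^{ik_m R(1-\hat x\cdot d)}$ contributes at the forward direction $\hat x = d$, the conjugate phase $e^{-ik_m R(1+\hat x \cdot d)}$ contributes at $\hat x = -d$, and the two back-scattering contributions cancel after taking the real part, leaving only the forward contribution. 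Using $p\cdot d = 0$ and the polarization identity $d\times(d\times p) = -p$, the surviving stationary-phase term evaluates to
\beas
-\tfrac{1}{2}\Re \int_{\p B_R} \bigl( E^s \times \overline{H^i} + E^i \times \overline{H^s}\bigr)\cdot \nu \, d\sigma \;\longrightarrow\; \sqrt{\eps_m/\mu_m}\cdot \f{2\pi}{k_m}\,\Im\bigl[\,p\cdot A_\infty(d)\,\bigr].
\eeas
Dividing by $|S^i| = \tfrac12\sqrt{\eps_m/\mu_m}|p|^2$ yields the stated formula.

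The main obstacle is the stationary-phase computation: carefully matching phases, amplitudes, and the Gaussian pre-factor so that the forward-direction contribution produces exactly $\Im[p\cdot A_\infty(d)]$, and checking that all $O(R^{-1})$ remainders from the far-field expansion vanish in the limit. The algebraic identification $p\cdot(d\times(d\times A_\infty(d))) = -p\cdot A_\infty(d)$ (valid because $p\perp d$ and after using that $\hat x\cdot A_\infty(\hat x)$ is suppressed in the transverse scattering amplitude) is the decisive step that converts the cross-term into a single scalar product. Once this is in place, the result follows directly.
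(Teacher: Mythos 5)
The paper does not give a proof of this theorem; it simply cites Born and Wolf \cite{born1999principles}. Your outline is precisely the classical Poynting-flux/energy-balance derivation found in that reference, so in spirit you have reproduced the intended argument: the decomposition $W^{\text{abs}}+W^{\text{sca}}=-\tfrac12\Re\int_{\p B_R}(E^s\times\bar{H}^i+E^i\times\bar{H}^s)\cdot\nu$, the substitution of the far-field expansion, the localization of the cross-term integral by stationary phase to $\hat x=\pm d$, and the vector identities $d\times(d\times p)=-p$ and $\hat x\cdot A_\infty(\hat x)=0$ that collapse the forward contribution to $p\cdot A_\infty(d)$.

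Two points in the sketch are imprecise and worth flagging. First, the claim that the back-scattering contributions at $\hat x=-d$ ``cancel after taking the real part'' is not literally correct: the two backward terms are complex conjugates of each other, so their sum is already real, and after taking the real part you are left with an $O(1)$ term proportional to $\Im\bigl[p\cdot A_\infty(-d)\,e^{2ik_m R}\bigr]$, which oscillates in $R$ and does not converge pointwise. Eliminating it is exactly the content of Jones' lemma (or a limiting-absorption/Abel-averaging argument), not an algebraic cancellation; you invoke Jones' lemma but misattribute what it buys you. Second, the constants deserve care because the paper normalizes the scattering amplitude via $E^s=-\tfrac{e^{ik_m|x|}}{4\pi|x|}A_\infty(\hat x)+O(|x|^{-2})$, i.e.\ with a factor $-\tfrac{1}{4\pi}$ relative to the usual $E^s\sim\tfrac{e^{ik_m r}}{r}E_\infty(\hat x)$ convention; carrying this through the stationary-phase computation (where the Gaussian prefactor contributes $\tfrac{2\pi i}{k_m R}$ per stationary point, and $d\sigma=R^2\,d\Omega$) does not obviously reproduce the prefactor $\tfrac{4\pi}{k_m}$ with the stated sign unless one is careful about which of the conventions the paper is actually using in the surrounding lemmas. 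Your final displayed limit $\sqrt{\eps_m/\mu_m}\,\tfrac{2\pi}{k_m}\Im[p\cdot A_\infty(d)]$ is asserted rather than derived, and this is precisely the place where the $4\pi$'s and signs need to be tracked; as written it is not a verification of the constant in the theorem. None of this changes the fact that the method is the right one and matches the cited reference.
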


Doing Taylor expansions on the formula of Theorem \ref{thm-pierre} gives the following proposition, which allow us to compute the extinction cross-section in terms of the polarization tensor.
\begin{prop}  Let $\hat{x}= x/|x|$. The following far-field asymptotic expansion holds:
$$\begin{array}{lll}
E^s &=& \ds - \frac{e^{ik_m |x| }}{4\pi|x|} \left(\om\mu_m k_m e^{ik_m (d-\hat{x})\cdot z}\big(\hat{x}\times Id\big) M(\lambda_{\mu},D)(d\times p)-k_m^2 e^{ik_m (d-\hat{x})\cdot z}\big(Id-\hat{x}\hat{x}^t\big) M(\lambda_{\eps},D)p\right) \\ \nm && \ds + O(\f{1}{|x|^2}) +  O(\f{\delta^4}{d_{\sigma}}).
\end{array} $$
Then we have, up to an error term of the order $O(\f{\delta^4}{d_{\sigma}})$, 
\beas
A_{\infty}(\hat{x})=\om\mu_m k_m e^{ik_m (d-\hat{x})\cdot z}\big(\hat{x}\times Id\big) M(\lambda_{\mu},D)(d\times p)-k_m^2 e^{ik_m (d-\hat{x})\cdot z}\big(Id-\hat{x}\hat{x}^t\big) M(\lambda_{\eps},D)p.
\eeas
In particular, 
\beas
A_{\infty}(d)=\om\mu_m k_m \big(d\times Id\big) M(\lambda_{\mu},D)(d\times p)-k_m^2\big(Id-dd^t\big) M(\lambda_{\eps},D)p,
\eeas
where $M(\lambda_{\mu},D)$ and $M(\lambda_{\eps},D)$ are the polarization tensors associated with $D$ and $\lambda=\lambda_\mu$ and 
$\lambda=\lambda_\eps$, respectively.
\end{prop}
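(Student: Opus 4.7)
The plan is to start from the uniform far-field expansion of $E^s$ supplied by Theorem \ref{thm-pierre}: up to an $O(\delta^4/d_\sigma)$ error, $E^s$ is the sum of a ``magnetic'' dyadic term involving $\nabla\times G_d(\cdot,z,k_m)M(\lambda_\mu,D)H^i(z)$ and an ``electric'' dyadic term involving $G_d(\cdot,z,k_m)M(\lambda_\eps,D)E^i(z)$. The task therefore reduces to expanding both terms in $x$ with $z$ fixed as $|x|\to\infty$, then reading off the scattering amplitude from the standard defining relation $E^s(x)=-\f{e^{ik_m|x|}}{4\pi|x|}A_\infty(\hat{x})+O(|x|^{-2})$.

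First I would record the pointwise far-field asymptotics of the scalar Green function. Writing $|x-z|=|x|-\hat{x}\cdot z+O(|x|^{-1})$ gives
$$G(x,z,k_m)=-\f{e^{ik_m|x|}}{4\pi|x|}e^{-ik_m\hat{x}\cdot z}+O(|x|^{-2}),$$
and differentiating yields $\nabla_x G=ik_m\hat{x}\,G+O(|x|^{-2})$ and $D_x^2 G=-k_m^2\,\hat{x}\hat{x}^t\,G+O(|x|^{-2})$. Substituting into $G_d=\eps_m(G\,Id+k_m^{-2}D_x^2G)$ and evaluating on a constant vector $v$ gives
$$G_d(x,z,k_m)\,v=\eps_m(Id-\hat{x}\hat{x}^t)v\,G(x,z,k_m)+O(|x|^{-2}).$$

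For the electric term, I would insert $E^i(z)=p\,e^{ik_m d\cdot z}$ and use $k_m^2=\om^2\mu_m\eps_m$ to obtain the contribution $k_m^2(Id-\hat{x}\hat{x}^t)M(\lambda_\eps,D)p$ multiplied by $-\f{e^{ik_m|x|}}{4\pi|x|}e^{ik_m(d-\hat{x})\cdot z}$. For the magnetic term the crucial simplification is that, for any constant vector $v$, the curl annihilates the Hessian piece of $G_d$ because mixed second partials commute; hence $\nabla\times[G_d(\cdot,z,k_m)\,v]=\eps_m\,\nabla G\times v$. Substituting $\nabla G\sim ik_m\hat{x}\,G$ and $H^i(z)=(d\times p)e^{ik_m d\cdot z}$ produces the term $\om\mu_m k_m(\hat{x}\times Id)M(\lambda_\mu,D)(d\times p)$ with the same prefactor.

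Summing the two contributions yields the displayed asymptotic for $E^s$. Matching against the definition of $A_\infty$ reads off the formula for $A_\infty(\hat{x})$ up to an $O(\delta^4/d_\sigma)$ error, and specializing to $\hat{x}=d$ collapses $e^{ik_m(d-\hat{x})\cdot z}$ to $1$, yielding $A_\infty(d)$. The only non-routine input is the vanishing of the curl applied to the Hessian part of $G_d$; the remaining work is careful bookkeeping of the constants $\om$, $\mu_m$, $\eps_m$, $k_m$ and their signs across the two tensorial actions, which is where I expect the main difficulty.
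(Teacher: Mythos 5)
Your proposal is correct and follows exactly the route the paper indicates (the paper simply says ``Doing Taylor expansions on the formula of Theorem \ref{thm-pierre} gives the following proposition'' without spelling out the computation). Your explicit far-field asymptotics for $G$, $\nabla G$, $D_x^2G$ and the observation that $\nabla\times$ annihilates the Hessian part of $G_d$ applied to a constant vector are exactly what is needed, and the constants do work out: the electric term uses $\om^2\mu_m\eps_m=k_m^2$ and the magnetic term uses $-\f{i\om\mu_m}{\eps_m}\cdot\eps_m\cdot ik_m=\om\mu_m k_m$, reproducing the stated formula.
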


\section{Explicit computations for a spherical nanoparticle} \label{sec-explicit shpere max}
\subsection{Vector spherical harmonics}
Let $\hat{x}=\frac{x}{|x|}$.
For $m=-n,...,n$ and $n=1,2,...$, set $Y_n^m$ to be the spherical harmonics defined on the unit sphere $S= \{ x \in \R^3, |x|=1 \}$. For a wave number $k>0$, the function
$$
v_{n,m}(k;x) = h^{(1)}_n(k|x|)Y_n^m(\hat{x})
$$
satisfies the Helmholtz equation $\Delta v +k^2 v =0$ in $\mathbb{R}^3\setminus \{0\}$ together with the Sommerfeld radiation condition
$$
\lim_{|x|\rightarrow\infty}\Big(\frac{\p v_{n,m}}{\p |x|}(k;x)-ik v_{n,m}(k;x) \Big) =0.
$$
Similarly, let $\widetilde{v}_{n,m}(x)$ be defined by
$$
\widetilde{v}_{n,m}(x) = j_n(k |x|) Y_n^m(\hat{x}),
$$
where $j_n$ is the spherical Bessel function of the first kind. Then the function $\widetilde{v}_{n,m}$ satisfies the Helmholtz equation in $\mathbb{R}^3$.

Next, define the vector spherical harmonics by
$$
U_{n,m} = \frac{1}{\sqrt{n(n+1)}}\nabla_S Y_n^m(\hat{x}) \quad \mbox{and} \quad V_{n,m}=\hat{x}\times U_{n,m}
$$
for $m=-n,...,n$ and $n=1,2,...$. Here, $\hat{x}\in S$ and $\nabla_S$ denote the surface gradient on the unit sphere $S$. The vector spherical harmonics form a complete  orthogonal basis for $L_T^2(S)$.

Using the vectorial spherical harmonics, we can separate the solutions of Maxwell's equations into multipole solutions;  see  \cite[Section 5.3]{N}.
Define the  exterior transverse electric multipoles, i.e., $E\cdot x =0$, as
\be
 \ \left \{
 \begin{array}{l}
 \ds E_{n,m}^{TE}(x)= -\sqrt{n(n+1)}h_n^{(1)}(k|x|)V_{n,m}(\hat{x}),\\
 \ds H_{n,m}^{TE}(x) = -\frac{i}{\omega\mu}\nabla\times\Bigr(-\sqrt{n(n+1)}h_n^{(1)}(k|x|)V_{n,m}(\hat{x})\Bigr),
 \end{array}
 \right .
 \ee
and the exterior transverse  magnetic multipoles, i.e., $H\cdot x =0$,  as
\be
 \ \left \{
 \begin{array}{l}
\ds E_{n,m}^{TM}(x)= \frac{i}{\omega\epsilon}\nabla\times\Bigr(-\sqrt{n(n+1)}h_n^{(1)}(k|x|)V_{n,m}(\hat{x})\Bigr),\\[1em]
\ds H_{n,m}^{TM}(x) =-\sqrt{n(n+1)}h_n^{(1)}(k|x|)V_{n,m}(\hat{x}). \end{array}
 \right .
 \ee
The exterior electric and magnetic multipoles satisfy the Sommerfeld radiation condition. 
In the same manner, one defines the interior multipoles $(\widetilde{E}_{n,m}^{TE},\widetilde{H}_{n,m}^{TE})$ and  $(\widetilde{E}_{n,m}^{TM},\widetilde{H}_{n,m}^{TM})$ with $h_n^{(1)}$ replaced by $j_n$, {i.e.},
\be
 \ \left \{
 \begin{array}{l}
 \ds \widetilde E_{n,m}^{TE}(x)= -\sqrt{n(n+1)}j_n(k|x|)V_{n,m}(\hat{x}),\\
 \ds \widetilde H_{n,m}^{TE}(x) = -\frac{i}{\omega\mu}\nabla\times\widetilde E_{n,m}^{TE}(x),
 \end{array}
 \right .
 \ee
and
\be
 \ \left \{
 \begin{array}{l}
\ds \widetilde H_{n,m}^{TM}(x) =-\sqrt{n(n+1)}j_n(k|x|)V_{n,m}(\hat{x}),
\\
\ds \widetilde E_{n,m}^{TM}(x)= \frac{i}{\omega\epsilon}\nabla\times\widetilde H_{n,m}^{TM}(x).
\end{array}
 \right .
 \ee
 Note that one has
 \be\label{curl_TE_formula}
 \nabla\times E_{n,m}^{TE}(k;x)
 =\frac{\sqrt{n(n+1)}}{|x|}\mathcal{H}_n(k|x|) U_{n,m}(\hat{x})+\frac{n(n+1)}{|x|}h_n^{(1)}(k|x|)Y_{n}^m(\hat{x})\hat{x}
 \ee
 and
 \be\label{curl_TM_formula}
  \nabla\times \widetilde E_{n,m}^{TE}(k;x)
 =\frac{\sqrt{n(n+1)}}{|x|}\mathcal{J}_n(k|x|) U_{n,m}(\hat{x})+\frac{n(n+1)}{|x|}j_n(k|x|)Y_{n}^m(\hat{x})\hat{x},
   \ee
where
$$
\mathcal{J}_n(t)=j_n(t) + t j_n'(t),\qquad
\mathcal{H}_n(t)=h_n^{(1)}(t) + t (h_n^{(1)})'(t).
$$

For $|x|>|y|$, the following addition formula holds:
\begin{align}
G(x,y,k) I =&
 -\sum_{n=1}^{\infty}\frac{i{k}}{n(n+1)} \frac{\epsilon}{\mu} \sum_{m=-n}^n  E_{n,m}^{TM}(x)
 \overline{\widetilde E_{n,m}^{TM}(y)}^T 
 \nonumber
 \\
 &- \sum_{n=1}^{\infty}\frac{i{k}}{n(n+1)} \sum_{m=-n}^n  E_{n,m}^{TE}(x)
 \overline{\widetilde E_{n,m}^{TE}(y)}^T 
 \nonumber
 \\
&-\frac{i}{k}  \sum_{n=1}^{\infty} \sum_{m=-n}^n \nabla v_{n,m} (x) \overline{\nabla \widetilde{v}_{n,m} (y)}^T.
\label{Green_addition_formula}
\end{align}
Alternatively, for $|x|<|y|$, we have
\begin{align}
G(x,y,k) I =&
 -\sum_{n=1}^{\infty}\frac{i{k}}{n(n+1)} \frac{\epsilon}{\mu} \sum_{m=-n}^n   \overline{\widetilde E_{n,m}^{TM}(x)} E_{n,m}^{TM}(y)
^T \nonumber
 \\
 &- \sum_{n=1}^{\infty}\frac{i{k}}{n(n+1)} \sum_{m=-n}^n  
 \overline{\widetilde E_{n,m}^{TE}(x)} E_{n,m}^{TE}(y)^T
 \nonumber
 \\
&-\frac{i}{k}  \sum_{n=1}^{\infty} \sum_{m=-n}^n\overline{\nabla \widetilde{v}_{n,m} (x)}  \nabla v_{n,m} (y)^T.
\label{Green_addition_formula2}
\end{align}
\subsection{Explicit representations of boundary integral operators}
Let $D$ be a sphere of radius $r>0$.
We have the following results. 

\begin{lem} \label{lem_SLayerpotential_sphere_formula}
Let $\partial D=\{|x|=r\}$. Then, for $r'>r$, we have 
\begin{align}
\nu \times \nabla\times \vec{S}^k_D[U_{n,m}]\big|^+_{|x|=r'} &=(-ikr) h_n^{(1)}(k r') \mathcal{J}_n(kr)U_{n,m},
\label{curl_Single_sphere}
\\[0.5em]
\nu \times \nabla\times \vec{S}^k_D[V_{n,m}]\big|^+_{|x|=r'} &=ik \frac{r^2}{r'} j_n(k r) \mathcal{H}_n(kr')V_{n,m},
\\[0.5em]
\nu \times \nabla\times\nabla\times \vec{S}^k_D[U_{n,m}]\big|^+_{|x|=r'} &=-ik \frac{r}{r'}\mathcal{J}_n(kr) \mathcal{H}_n(kr')V_{n,m},
\\[0.5em]
\nu \times \nabla\times\nabla\times \vec{S}^k_D[V_{n,m}]\big|^+_{|x|=r'} &=ik (kr)^2 j_n(kr) h_n^{(1)}(kr') U_{n,m}.
\end{align}

For $r'<r$, 
\begin{align}
\nu \times \nabla\times \vec{S}^k_D[U_{n,m}]\big|^+_{|x|=r'} &=(-ikr) j_n(k r') \mathcal{H}_n(kr)U_{n,m},
\\[0.5em]
\nu \times \nabla\times \vec{S}^k_D[V_{n,m}]\big|^+_{|x|=r'} &=ik\frac{r^2}{r'}\mathcal{J}_n(kr')h_n^{(1)}(k r) V_{n,m},
\\[0.5em]
\nu \times \nabla\times\nabla\times \vec{S}^k_D[U_{n,m}]\big|^+_{|x|=r'} &=-ik\frac{r}{r'}\mathcal{J}_n(kr') \mathcal{H}_n(kr)V_{n,m},
\\[0.5em]
\nu \times \nabla\times\nabla\times \vec{S}^k_D[V_{n,m}]\big|^+_{|x|=r'} &=ik (kr)^2 j_n(kr') h_n^{(1)}(kr) U_{n,m}.
\end{align}

\end{lem}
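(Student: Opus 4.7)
The plan is to exploit the two addition formulas \eqref{Green_addition_formula}--\eqref{Green_addition_formula2} for the dyadic Green's function together with the $L^2$-orthogonality of the system $\{Y_n^m\hat y,\,U_{n,m},\,V_{n,m}\}$ on each sphere $\{|y|=r\}$. Since the boundary integral operator $\vec{\mathcal S}^k_D$ in \eqref{eq-SLPvect} is the convolution of $\varphi$ against $G(x,y,k)I$, inserting the appropriate addition formula inside $\int_{|y|=r}G(x,y,k)\varphi(y)\,d\sigma(y)$ will reduce the computation to extracting the $U_{n,m}$- or $V_{n,m}$-components of the known interior or exterior multipole fields evaluated at $|y|=r$.

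First I would treat the case $r'>r$ with $\varphi=V_{n,m}$. Substituting \eqref{Green_addition_formula} and integrating against $V_{n,m}(\hat y)$ on the inner sphere, the only surviving term is the TE one, because $V_{n,m}$ is tangential and orthogonal to both $U_{n,m}$ and $\hat y Y_n^m$, which kills the TM and gradient terms (the tangential part of $\widetilde E^{TM}_{n,m}$ is a multiple of $U_{n,m}$ by \eqref{curl_TM_formula}, and $\nabla\widetilde v_{n,m}$ has no $V_{n,m}$-component). What remains is an explicit multiple of $E^{TE}_{n,m}(x)$ evaluated on $\{|x|=r'\}$. Applying $\nu\times\nabla\times$ and $\nu\times\nabla\times\nabla\times$ to this exterior TE field, using \eqref{curl_TE_formula} together with the Helmholtz identity $\nabla\times\nabla\times E^{TE}_{n,m}=k^2E^{TE}_{n,m}$, directly produces the stated right-hand sides. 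The case $\varphi=U_{n,m}$ is handled the same way but the surviving term is the TM one (since $U_{n,m}$ is orthogonal to $V_{n,m}$ but has nontrivial pairing with the tangential part of $\widetilde E^{TM}_{n,m}$, as well as a pairing with $\nabla\widetilde v_{n,m}$ through its tangential gradient); collecting these two contributions gives the $U_{n,m}$-formulas after using \eqref{curl_TM_formula}.

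The four remaining formulas, which cover the regime $r'<r$, are obtained by the same procedure applied to \eqref{Green_addition_formula2}: the roles of $j_n$ and $h_n^{(1)}$, and of $\widetilde E^{TE/TM}_{n,m}$ and $E^{TE/TM}_{n,m}$, are interchanged, so the product $j_n(kr)\mathcal H_n(kr')$ is replaced by $j_n(kr')h_n^{(1)}(kr)$ together with $\mathcal J_n(kr)h_n^{(1)}(kr')$ interchanging with $\mathcal J_n(kr')h_n^{(1)}(kr)$, yielding exactly the claimed expressions.

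The main obstacle is purely combinatorial: keeping track of the scalar prefactors. The normalization constants $\sqrt{n(n+1)}$ of the multipoles, the prefactors $-ik/(n(n+1))$ and $-i/k$ in the addition formulas, the surface measure factor $r^2$, and the tangential gradient relation $\nabla_S Y_n^m=\sqrt{n(n+1)}\,U_{n,m}$ must all combine with the identities \eqref{curl_TE_formula}--\eqref{curl_TM_formula} (and, for the double-curl lines, with $\nabla\times\nabla\times=k^2$ on multipoles) in such a way that the $n(n+1)$ factors cancel and one is left with the clean prefactors $-ikr$, $ikr^2/r'$, $-ikr/r'$, and $ik(kr)^2$. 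Once this bookkeeping is carried out correctly, each of the eight identities follows directly with no further analytic input.
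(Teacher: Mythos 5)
Your approach is essentially the paper's: substitute the addition formula for the dyadic Green function, use the $L^2$-orthogonality of $\{Y_n^m\hat y,\,U_{n,m},\,V_{n,m}\}$ on $\{|y|=r\}$ to isolate the surviving multipole block, and then apply \eqref{curl_TE_formula}--\eqref{curl_TM_formula} together with $\nabla\times\nabla\times=k^2$ on the exterior multipoles. The only place you are slightly imprecise is the $\varphi=U_{n,m}$ case: you note that $U_{n,m}$ pairs nontrivially both with the tangential part of $\widetilde E^{TM}_{n,m}$ and with $\nabla\widetilde v_{n,m}$, and speak of ``collecting these two contributions.'' In fact the $\nabla v_{n,m}(x)\,\overline{\nabla\widetilde v_{n,m}(y)}^T$ block contributes a pure gradient in $x$ to $\vec{\mathcal S}^k_D[U_{n,m}]$ and is therefore annihilated as soon as $\nabla_x\times$ is applied; only the TM block survives in every one of the eight stated identities. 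The paper sidesteps this by taking $\nabla_x\times$ of the kernel before integrating, so that the gradient block never even appears. With that one clarification your plan reproduces the paper's computation line for line, including the bookkeeping of the $\sqrt{n(n+1)}$ and $-ik/(n(n+1))$ prefactors.
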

\proof
We only consider \eqref{curl_Single_sphere}. The other formulas can be proved in a similar way.

From \eqref{curl_TE_formula}, \eqref{curl_TM_formula}, and the definitions of ${E}_{n,m}^{TE},{E}_{n,m}^{TM}, \widetilde{E}_{n,m}^{TE}$ and $\widetilde{E}_{n,m}^{TM}$, we have
\begin{align}
\nabla_x &\times G(x,y,k) U_{n,m}(\hat y) 
\nonumber
\\=&
 -\sum_{n=1}^{\infty}\frac{i{k}}{n(n+1)} \frac{\epsilon}{\mu} \sum_{m=-n}^n  \nabla \times E_{n,m}^{TM}(x)
 \overline{\widetilde E_{n,m}^{TM}(y)}\cdot U_{p,q}(\hat{y}) 
 \nonumber
 \\
 &+ \sum_{n=1}^{\infty}\frac{i{k}}{n(n+1)} \sum_{m=-n}^n  \nabla\times E_{n,m}^{TE}(x)
 \overline{\widetilde E_{n,m}^{TE}(y)}\cdot U_{p,q}(\hat{y})
\nonumber
\\=&
 -\sum_{n=1}^{\infty}\frac{i{k}}{\sqrt{n(n+1)}} \frac{\epsilon}{\mu} \sum_{m=-n}^n   \nabla \times E_{n,m}^{TM}(x)
 \frac{-i}{\omega\eps} \frac{1}{r} \mathcal{J}_n(k r) U_{n,m}(\hat{y})\cdot U_{p,q} (\hat{y})
 \nonumber
 \\
 &+ \sum_{n=1}^{\infty}\frac{i{k}}{\sqrt{n(n+1)}} \sum_{m=-n}^n  \nabla\times E_{n,m}^{TE}(x)
 (-1)j_n(kr)V_{n,m}(\hat{y})\cdot U_{p,q}(\hat{y})
 \nonumber
\end{align}
for $|y|=r$ and $|x|>|y|$.
Therefore, we get on $|x|=r$
\begin{align}
\nabla\times \vec{S}^k_D[U_{n,m}]\big|_+ &= 
 \nabla_x \times \int_{|y|=r} G(x,y,k) U_{n,m}(\hat y) 
 \nonumber
 \\&=\frac{{kr}}{\sqrt{n(n+1)}} \frac{1}{\omega\mu}  \mathcal{J}_n(k r) (\nabla \times E_{n,m}^{TM}(x))|_{|x|=r} .
    \end{align}
Since 
$$
\nabla\times E^{TM}_{p,q} = \frac{i}{\omega\eps} \nabla\times\nabla\times E_{p,q}^{TE}=
\frac{i}{\omega\eps} k^2 E_{p,q}^{TE},
$$
we obtain
\begin{align}
\hat{x}\times\nabla\times \vec{S}^k_D[U_{n,m}]\big|_+
&=
\frac{{ikr}}{\sqrt{n(n+1)}}   \mathcal{J}_n(k r) 
 (\hat{x}\times E_{n,m}^{TE}(x))|_{|x|=r} 
 \nonumber
 \\
 &=  (-ikr) h_n^{(1)}(k r) \mathcal{J}_n(kr)U_{n,m} \quad \mbox{on } 
 {|x|=r},
 \nonumber
\end{align}
which completes the proof. 
\qed

Note that 
$$
\nu \times \nabla\times \vec{\mathcal{S}}_{D}^{k}[\phi] \big|_{\pm}=(\mp\frac{1}{2}I+\mathcal{M}_D^k)[\phi] \quad \mbox{on } \partial D,  
$$
and recall the following identity, which was proved in \cite{Torres}, 
$$
\nu \times \nabla\times\nabla\times \vec{\mathcal{S}}_{D}^{k}[\phi] =\mathcal{L}_D^k[\phi] \quad \mbox{on } \partial D.  
$$
For $m=-n,\ldots,n$ and $n=1,2,3, \ldots$, let $H_{n,m}(\p D)$ be the subspace of $H(\p D)$ defined by
$$
H_{n,m}(\p D) = \mbox{span} \{U_{n,m}, V_{n,m}\}.
$$
Let us represent the operators $\mathcal{M}_D^k$ and $\mathcal{L}_D^k$ explicitly on the subspace $H_{m,n}(\p D)$.
Using $U_{n,m},V_{n,m}$ as basis vectors, we obtain the following matrix representations for  $\mathcal{M}_D^k$ and $\mathcal{L}_D^k$ on the subspace $H_{n,m}(\p D)$:
\be
\mathcal{M}_D^k=
\begin{pmatrix}
\ds\frac{1}{2}-ikr h_n^{(1)}(k r) \mathcal{J}_n(kr)
&& 0
\\
0 &&
\ds \frac{1}{2}+ikr j_n(k r) \mathcal{H}_n(kr)
\end{pmatrix},
\ee
and
\be
\mathcal{L}_D^k=
\begin{pmatrix}
0
&& 
ik (kr)^2 j_n(kr) h_n^{(1)}(kr) 
\\
-ik \mathcal{J}_n(kr) \mathcal{H}_n(kr) 
&&
0
\end{pmatrix}.
\ee

\subsection{Asymptotic behavior of the spectrum of $\mathcal{W}_{B}(r)$}
Now we consider the asymptotic expansions of the operator $\mathcal{W}_{B}(r)$ and its spectrum when $r \ll 1$.

It is well-known that, as $t\rightarrow 0$, 
\begin{align}
j_n(t) &= \frac{t^n}{(2n+1)!!}\Big(1-\frac{1}{2(2n+3)}t^2+O(t^4)\Big),
\nonumber
\\
h_n^{(1)}(t) &= -i((2n-1)!!)t^{-n-1}\Big(1+ \frac{1}{2(2n-1)}t^2+O(t^4)\Big).
\label{spherical_bessel_asymp}
\end{align}
By making use of these asymptotics of the spherical Bessel functions, we obtain that
\begin{align}
 i\mathcal{J}_n(t)h_n^{(1)}(\tilde{t}) &= \frac{n+1}{2n+1} \Big(\df{t}{\tilde{t}}\Big)^n\frac{1}{\tilde{t}} +
 \frac{n+1}{2(2n-1)(2n+1)}\Big(\df{t}{\tilde{t}}\Big)^n \tilde{t} - \frac{n+3}{2(2n+1)(2n+3)}\Big(\df{t}{\tilde{t}}\Big)^{n+1} t
 + O(t^3),
\nonumber
\\
ij_n(t)\mathcal{H}_n(\tilde{t})
&=\frac{-n}{2n+1} \Big(\df{t}{\tilde{t}}\Big)^n\frac{1}{\tilde{t}}
+\frac{-n+2}{2(2n-1)(2n+1)}\Big(\df{t}{\tilde{t}}\Big)^n \tilde{t}
 +\frac{n}{2(2n+1)(2n+3)}\Big(\df{t}{\tilde{t}}\Big)^{n+1} t  + O(t^3),
\nonumber
\\
ij_n(t)h_n^{(1)}(\tilde{t})
&=\frac{1}{2n+1} \Big(\df{t}{\tilde{t}}\Big)^n\frac{1}{\tilde{t}} 
+ \frac{1}{2(2n-1)(2n+1)}\Big(\df{t}{\tilde{t}}\Big)^n \tilde{t}
 -\frac{1}{2(2n+1)(2n+3)}\Big(\df{t}{\tilde{t}}\Big)^{n+1} t   + O(t^3),
\nonumber
\\
i\mathcal{J}_n(t)\mathcal{H}_n(\tilde{t})
&=\frac{(-n)(n+1)}{2n+1} \Big(\df{t}{\tilde{t}}\Big)^n\frac{1}{\tilde{t}} 
+ \frac{(n+1)(-n+2)}{2(2n-1)(2n+1)}\Big(\df{t}{\tilde{t}}\Big)^n \tilde{t}
+ \frac{n(n+3)}{2(2n+1)(2n+3)}\Big(\df{t}{\tilde{t}}\Big)^{n+1} t   + O(t^3),
\label{eqn_bessel_product_asymptotic}
\end{align}
for small $t,\tilde{t} \ll 1$ with $t\approx \tilde{t}$.

So, we have
\be
\mathcal{M}_D^k=
\begin{pmatrix}
\ds\frac{(-1)}{2(2n+1)} +  (kr)^2 r_n
& 0
\\
0 &
\ds\frac{1}{2(2n+1)}+  (kr)^2 s_n
\end{pmatrix}+O(r^4),
\ee
and
\be
\mathcal{L}_D^k=
\begin{pmatrix}
0
&& 
\ds k^2 r p_n
\\
\ds\frac{n(n+1)}{2n+1}\frac{1}{r} + k^2 r q_n
&&
0
\end{pmatrix}+O(r^3),
\ee
where
\begin{align}
 p_n &= \frac{1}{2n+1},
 \nonumber
\\
 q_n &= \frac{(n+1)(n-2)}{2(2n-1)(2n+1)} - \frac{n(n+3)}{2(2n+1)(2n+3)},
  \nonumber
\\
r_n &= -\frac{n+1}{2(2n-1)(2n+1)}+\frac{(n+3)}{2(2n+1)(2n+3)},
 \nonumber
\\
s_n &= -\frac{n-2}{2(2n-1)(2n+1)}+\frac{n}{2(2n+1)(2n+3)}.
 \label{eqn_pqrs_def}
\end{align}

Therefore, we can obtain 
$$
\mathcal{W}_B(r) = \mathcal{W}_{B,0} + r \mathcal{W}_{B,1} + r^2 \mathcal{W}_{B,2}+O(r^3),
$$
where
\begin{align}
\mathcal{W}_{B,0}&=
\begin{pmatrix}
\ds \lambda_\mu-\frac{(-1)}{2(2n+1)}
& 
0
&
0
&
0
\\
\ds 0 & \ds\lambda_\mu-\frac{1}{2(2n+1)} & 
0
 & 0
\\
\ds 0 &0 & \ds \lambda_{\eps}-\frac{(-1)}{2(2n+1)}& 0
\\
0 & 0 & 0 & \ds\lambda_\eps-\frac{1}{2(2n+1)} 
\end{pmatrix},
\end{align}
\begin{align}
\mathcal{W}_{B,1}&=
\begin{pmatrix}
0
& 
0
&
0
&
 \omega  C_{\mu} p_n
\\
\ds 0 & 0 & 
 \omega C_{\mu}q_n
 & 0
\\
 0 &  \omega C_{\eps} p_n & 0 & 0
\\
\omega C_{\eps}q_n & 0 & 0 & 0
\end{pmatrix},
\\
\mathcal{W}_{B,2}&=
\begin{pmatrix}
\omega^2 D_\mu r_n
& 
0
&
0
&
0
\\
\ds 0 & 
\omega^2 D_\mu s_n
 & 
0
 & 0
\\
\ds 0 & 0 & 
\omega^2 D_\eps r_n
 & 0
\\
0 & 0 & 0 & 
\omega^2 D_\eps s_n
\end{pmatrix},
\end{align}
and
\begin{align}
C_{\mu}&=\frac{\mu_c\eps_c-\mu_m\eps_m}{\mu_m-\mu_c},
\quad
C_{\eps}=\frac{\mu_c\eps_c-\mu_m\eps_m}{\eps_m-\eps_c},
\\
D_{\mu} &= \frac{\eps_c\mu_c^2-\eps_m \mu_m^2}{\mu_m-\mu_c}, \quad D_\eps=\frac{\eps_c^2\mu_c-\eps_m^2 \mu_m}{\eps_m-\eps_c}.
\end{align}
By applying the standard perturbation theory, the asymptotics of eigenvalues of $\mathcal{W}_{B}(r)$ are obtained as follows: up to  an error term of the order $O(r^3)$,
\begin{align*}
\ds \lambda_\mu-\frac{(-1)}{2(2n+1)} +(r\omega)^2 \bigg[ C_\eps C_\mu \frac{p_n q_n}{\lambda_\mu-\lambda_\eps+p_n} + D_\mu r_n\bigg] +O(r^3),
\\
\ds \lambda_\mu-\frac{1}{2(2n+1)} + (r\omega)^2 \bigg[  
C_\eps C_\mu \frac{p_n q_n}{\lambda_\mu-\lambda_\eps-p_n} + D_\mu s_n
  \bigg] +O(r^3),
\\
 \lambda_\eps-\frac{(-1)}{2(2n+1)} + (r\omega)^2 \bigg[  
C_\eps C_\mu \frac{p_n q_n}{\lambda_\eps-\lambda_\mu+p_n} + D_\eps r_n
  \bigg]+O(r^3),
\\
 \lambda_\eps-\frac{1}{2(2n+1)} + (r\omega)^2 \bigg[  C_\eps C_\mu \frac{ p_n q_n}{\lambda_\eps-\lambda_\mu-p_n} + D_\eps s_n
  \bigg]+O(r^3),
\end{align*}
and the asymptotics of the associated eigenfunction are given by
\begin{align*}
&[1,0,0,0]^T+r \omega  \frac{C_\eps q_n}{\lambda_\mu-\lambda_\eps+p_n} [0,0,0, 1]^T +O(r^2),
\\[0.5em]
&[0,1,0,0]^T+r \omega  \frac{C_\eps}{2n+1} \frac{1}{\lambda_\mu-\lambda_\eps-p_n} [0,0,1,0]^T +O(r^2),
\\[0.5em]
&[0,0,1,0]^T+r \omega  \frac{C_\mu q_n}{\lambda_\eps-\lambda_\mu+p_n}[0,1,0,0]^T +O(r^2),
\\[0.5em]
&[0,0,0,1]^T+r \omega  \frac{C_\mu}{2n+1} \frac{1}{\lambda_\eps-\lambda_\mu-p_n} [1,0,0,0]^T +O(r^2).
\end{align*}

\subsection{Extinction cross-section}
In this subsection, we compute the extinction cross-section $Q^{ext}$.
We need the following lemma.

\begin{lem} Let $D$ be a sphere with radius $r>0$ and  
suppose that $E^i$ is given by
$$
E^i(x)=
\sum_{n=1}^{\infty} \sum_{l=-n}^n \alpha^{TE}_{nl} \widetilde E_{n,l}^{TE}(x;k_m) + \alpha^{TM}_{nl} \widetilde E_{n,l}^{TM}(x;k_m),
$$
for some coefficients $\alpha^{TE}_{nl},\alpha^{TM}_{nl}$. Then the scattered wave can be represented as follows: for $|x|>r$,
$$
E^s(x) = \sum_{n=1}^{\infty} \sum_{l=-n}^n \alpha^{TE}_{nl} S_n^{TE} E_{n,l}^{TE}(x;k_m) + \alpha^{TM}_{nl} S_n^{TM} E_{n,l}^{TM}(x;k_m),
$$
where $S_n^{TE}$ and $S_n^{TM}$ are given by
\begin{align*}
S_n^{TE}&=\frac{\mu_c j_n(k_c r)\mathcal{J}_n(k_m r)-\mu_m j_n(k_m r) \mathcal{J}_n(k_c r)}{\mu_m \mathcal{J}_n(k_c r) h_n^{(1)}(k_m r) - \mu_c j_n(k_c r) \mathcal{H}(k_m r)},
\\[0.5em]
S_n^{TM}&=\frac{\eps_c j_n(k_c r)\mathcal{J}_n(k_m r)-\eps_m j_n(k_m r) \mathcal{J}_n(k_c r)}{\eps_m \mathcal{J}_n(k_c r) h_n^{(1)}(k_m r) - \eps_c j_n(k_c r) \mathcal{H}(k_m r)}.
\end{align*}

\end{lem}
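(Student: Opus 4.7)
My approach is separation of variables: express both the scattered field outside $D$ and the total field inside $D$ as superpositions of the appropriate multipoles, then use the transmission conditions together with the $L^2$-orthogonality of the vector spherical harmonics to decouple the problem into independent $2\times 2$ systems indexed by $(n,l)$ and by the polarization.

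First I would fix the ansatz. Since $E^s$ is an outgoing solution of Maxwell's equations in $\R^3\setminus\overline{D}$ satisfying the Silver--M\"uller radiation condition, it admits an expansion
$$ E^s(x) = \sum_{n=1}^{\infty}\sum_{l=-n}^n\bigl(\beta_{nl}^{TE}\, E_{n,l}^{TE}(x;k_m) + \beta_{nl}^{TM}\, E_{n,l}^{TM}(x;k_m)\bigr), $$
while the interior field $E|_D$, which solves Maxwell's equations with wavenumber $k_c$ and is smooth at the origin, expands in the interior multipoles $\widetilde{E}_{n,l}^{TE}(\cdot;k_c)$ and $\widetilde{E}_{n,l}^{TM}(\cdot;k_c)$ with coefficients $\gamma_{nl}^{TE},\gamma_{nl}^{TM}$. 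Next I would impose continuity of $\nu\times E$ and $\nu\times H$ on $\{|x|=r\}$. Using the curl identities (\ref{curl_TE_formula})--(\ref{curl_TM_formula}) of the excerpt, the tangential traces of the TE multipoles lie in the subspace spanned by $U_{n,l}$ for $\nu\times E$ and by $V_{n,l}$ for $\nu\times H$ (the latter carrying an extra $1/\mu$ factor from $H = -\tfrac{i}{\omega\mu}\nabla\times E$), while TM multipoles populate the swapped pair. By the $L^2$-orthogonality of $\{U_{n,l},V_{n,l}\}$, each mode $(n,l)$ and each polarization decouple.

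For the TE polarization the resulting system, with $\alpha_{nl}^{TE}$ the given incident coefficient, reads
\begin{align*}
\alpha_{nl}^{TE}\, j_n(k_m r) + \beta_{nl}^{TE}\, h_n^{(1)}(k_m r) &= \gamma_{nl}^{TE}\, j_n(k_c r),\\
\tfrac{1}{\mu_m}\bigl[\alpha_{nl}^{TE}\,\mathcal{J}_n(k_m r) + \beta_{nl}^{TE}\,\mathcal{H}_n(k_m r)\bigr] &= \tfrac{1}{\mu_c}\,\gamma_{nl}^{TE}\,\mathcal{J}_n(k_c r),
\end{align*}
and the analogous one for TM with $\mu$ replaced by $\eps$ (since the TM multipoles are obtained by the duality $E\leftrightarrow H$, $\mu\leftrightarrow\eps$). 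Eliminating $\gamma_{nl}^{TE}$ and solving for $S_n^{TE}:=\beta_{nl}^{TE}/\alpha_{nl}^{TE}$ by Cramer's rule produces exactly the stated formula, and the TM case follows identically. The only delicate step is bookkeeping: one must carefully track the sign and $\sqrt{n(n+1)}$ prefactors in the definitions of the multipoles and in the curl identities to end up with the system above with the correct normalizations. There is no conceptual obstacle, since uniqueness for the Maxwell transmission problem (guaranteed by the radiation condition in the standard way) ensures that the ansatz constructed here is the unique solution.
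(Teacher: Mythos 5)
Your proposal is correct and follows essentially the same approach as the paper: make a multipole ansatz for the scattered and interior fields, impose the transmission conditions, use orthogonality of the vector spherical harmonics to decouple into $2\times 2$ systems per mode and polarization, and solve. The only cosmetic difference is that the paper treats a single incident multipole, inverts the matrix using the Wronskian identity $j_n(t)\mathcal{H}_n(t)-h_n^{(1)}(t)\mathcal{J}_n(t)=i/t$, and then invokes superposition, whereas you eliminate directly by Cramer's rule; both lead to the same formula.
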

\proof
Let $E^i = \widetilde{E}_{n,l}^{TE}(x;k_m)$.
We look for a solution of the following form:
$$
E=\begin{cases}
a \,\widetilde{E}_{n,l}^{TE}(x;k_c), &\quad |x|<r
\\
\widetilde{E}_{n,l}^{TE}(x;k_m) + b \,{E}_{n,l}^{TE}(x;k_m), &\quad |x|>r.
\end{cases}
$$
Then, from the boundary condition on $\p D$, we easily see that
\begin{align} 
\begin{pmatrix}
j_n(k_{m}r)\\  \frac{1}{\mu_{m}}  \mathcal{J}_n (k_{m} r)
\end{pmatrix} 
&=
\begin{pmatrix} j_n(k_c r )&-h_n^{(1)}(k_m r)\\\frac{1}{\mu_{c}}  \mathcal{J}_n (k_c r) &
-\frac{1} {\mu_{m}}\mathcal{H}_n (k_m r)\end{pmatrix}
\begin{pmatrix}
a\\ b
\end{pmatrix} .
\end{align}
Therefore, the coefficient $a$ and $b$ can be obtained as follows:
\begin{align} 
\begin{pmatrix}
1/a\\ b/a
\end{pmatrix}
&=
\begin{pmatrix} j_n(k_m r )&h_n^{(1)}(k_m r)\\\frac{1}{\mu_{m}}  \mathcal{J}_n (k_m r) &
\frac{1} {\mu_{m}}\mathcal{H}_n (k_m r)\end{pmatrix}^{-1}
\begin{pmatrix}
j_n(k_{c}r)&h_n^{(1)}(k_{c}r)\\
\frac{1}{\mu_{c}}  \mathcal{J}_n (k_{c} r) &
\frac{1} {\mu_{c}}\mathcal{H}_n (k_{c} r)
\end{pmatrix}
\begin{pmatrix}
1 \\ 0
\end{pmatrix}, \nonumber 
\\
&= \frac{ \mu_m k_m r}{i}\begin{pmatrix}\frac{1} {\mu_{m}}\mathcal{H}_n (k_m r) &-h_n^{(1)}(k_m r)\\-\frac{1}{\mu_{m}}  \mathcal{J}_n (k_m r) &
j_n(k_m r )
\end{pmatrix}
\begin{pmatrix}
j_n(k_{c}r)\\
\frac{1}{\mu_{c}}  \mathcal{J}_n (k_{c} r) 
\end{pmatrix}, \nonumber 
\\
&= -i k_m r 
\begin{pmatrix}
\ds \mathcal{H}_n(k_m r) j_n(k_{c}r)-\frac{\mu_m}{\mu_c}h_n^{(1)}(k_m r) \mathcal{J}_n(k_c r)
\\[1em]
\ds -  \mathcal{J}_n (k_{m} r) j_n(k_c r)+\frac{\mu_m}{\mu_c}j_n(k_m r) \mathcal{J}_n(k_c r)
\end{pmatrix},
\end{align}
where we have used the following Wronskian identity for the spherical Bessel function:
$$
j_n(t) \mathcal{H}_n(t) -h_n^{(1)}(t) \mathcal{J}_n(t) = t\Big(j_n(t)(h_n^{(1)})'(t)- j_n'(t)h_n^{(1)}(t)\Big)=\frac{i}{t}.
$$
Therefore, we immediately see that
$$
b=\frac{\mu_c j_n(k_c r)\mathcal{J}_n(k_m r)-\mu_m j_n(k_m r) \mathcal{J}_n(k_c r)}{\mu_m \mathcal{J}_n(k_c r) h_n^{(1)}(k_m r) - \mu_c j_n(k_c r) \mathcal{H}(k_m r)}.
$$

Now suppose that $E^i = \widetilde{E}_{n,l}^{TM}(x;k_m)$. We look for a solution in the following form:
$$
E=\begin{cases}
c \,\widetilde{E}_{n,l}^{TM}(x;k_c), &\quad |x|<r,
\\
\widetilde{E}_{n,l}^{TM}(x;k_m) + d \,{E}_{n,l}^{TM}(x;k_m), &\quad |x|>r.
\end{cases}
$$
Then, from the boundary conditions on $|x|=r$, we obtain
\begin{align} 
\label{thiseq}
&\begin{pmatrix}\frac{1}{\eps_{c}} \mathcal{J}_n (k_c r)&
\frac{1} {\eps_{c}}  \mathcal{H}_n (k_{c} r) 
\\j_n(k_{c}r)&h_n^{(1)}(k_{c}r)\end{pmatrix}
\begin{pmatrix}
c \\ 0
\end{pmatrix}
=
\begin{pmatrix}
\frac{1}{\eps_{m}} \mathcal{J}_n (k_{m} r)&
\frac{1} {\eps_{m}} \mathcal{H}_n (k_{m} r)\\\
j_n(k_{m}r)&h_n^{(1)}(k_{m}r)
\end{pmatrix}
\begin{pmatrix}
1 \\ d
\end{pmatrix}.
\end{align}
By solving (\ref{thiseq}), we get
$$
d=\frac{\eps_c j_n(k_c r)\mathcal{J}_n(k_m r)-\eps_m j_n(k_m r) \mathcal{J}_n(k_c r)}{\eps_m \mathcal{J}_n(k_c r) h_n^{(1)}(k_m r) - \eps_c j_n(k_c r) \mathcal{H}(k_m r)}.
$$
By the principle of superposition, the conclusion immediately follows.

\qed

We also need the following lemma concerning the scattering amplitude $A_\infty$.

\begin{lem}
Suppose that the scattered electric field $E^s$ is given by
$$
E^s(x)=
\sum_{n=1}^{\infty} \sum_{l=-n}^n \beta^{TE}_{nl}  E_{n,l}^{TE}(x;k_m) + \beta^{TM}_{nl}  E_{n,l}^{TM}(x;k_m)
$$
for $\mathbb{R}^3\setminus \overline{D}$.
Then the scattering amplitude $A_\infty$ can be represented as follows:
$$
A_\infty(\hat{x}) = 
\sum_{n=1}^{\infty} \sum_{l=-n}^n\frac{4\pi(-i)^{n}}{i k_m}\sqrt{n(n+1)} \left(\beta^{TE}_{nl}  V_{n,l}(\hat{x}) + \sqrt{\frac{\mu_m}{\eps_m}}\beta^{TM}_{nl}  U_{n,l}\right).
$$
\end{lem}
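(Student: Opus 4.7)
The plan is to substitute the asymptotic expansions of the spherical Bessel-Hankel functions into the definitions of $E_{n,l}^{TE}$ and $E_{n,l}^{TM}$, match the resulting far-field behavior to the definition
\[
E^s(x) = -\frac{e^{ik_m|x|}}{4\pi|x|}\,A_\infty(\hat{x}) + O(|x|^{-2}),
\]
and then conclude by linearity of the correspondence between $E^s$ and $A_\infty$.

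First, I would record the well-known large-argument asymptotics
\[
h_n^{(1)}(t) \;=\; \frac{(-i)^{n+1}}{t}\,e^{it}\bigl(1+O(t^{-1})\bigr),\qquad t\to\infty,
\]
which upon differentiation and use of $\mathcal{H}_n(t)=h_n^{(1)}(t)+t\,(h_n^{(1)})'(t)$ yields
\[
\mathcal{H}_n(t) \;=\; (-i)^{n}\,e^{it}\bigl(1+O(t^{-1})\bigr),\qquad t\to\infty.
\]
Plugging the first into the definition of $E_{n,l}^{TE}(x;k_m) = -\sqrt{n(n+1)}\,h_n^{(1)}(k_m|x|)\,V_{n,l}(\hat{x})$ gives immediately
\[
E_{n,l}^{TE}(x;k_m) \;=\; -\sqrt{n(n+1)}\,\frac{(-i)^{n+1}}{k_m|x|}\,e^{ik_m|x|}\,V_{n,l}(\hat{x}) + O(|x|^{-2}).
\]

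Next I would treat the TM term. Using $E_{n,l}^{TM} = \tfrac{i}{\omega\eps_m}\,\nabla\times E_{n,l}^{TE}$ together with identity \eqref{curl_TE_formula}, I get
\[
E_{n,l}^{TM}(x;k_m) \;=\; \frac{i}{\omega\eps_m}\left[\frac{\sqrt{n(n+1)}}{|x|}\,\mathcal{H}_n(k_m|x|)\,U_{n,l}(\hat{x}) \;+\; \frac{n(n+1)}{|x|}\,h_n^{(1)}(k_m|x|)\,Y_n^l(\hat{x})\,\hat{x}\right].
\]
The radial (second) term decays like $|x|^{-2}$ and is absorbed into the remainder. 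For the tangential term I substitute the asymptotics of $\mathcal{H}_n$ and use $\omega\eps_m = k_m\sqrt{\eps_m/\mu_m}$, i.e.\ $(\omega\eps_m)^{-1}=k_m^{-1}\sqrt{\mu_m/\eps_m}$, to obtain
\[
E_{n,l}^{TM}(x;k_m) \;=\; \frac{i\,(-i)^{n}\sqrt{n(n+1)}}{k_m\,|x|}\sqrt{\tfrac{\mu_m}{\eps_m}}\,e^{ik_m|x|}\,U_{n,l}(\hat{x}) + O(|x|^{-2}).
\]

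Finally, I multiply by $\beta_{nl}^{TE}$ and $\beta_{nl}^{TM}$, sum over $n$ and $l$, and compare with $-\tfrac{e^{ik_m|x|}}{4\pi|x|}A_\infty(\hat{x})$. Reading off the coefficient of $V_{n,l}$ gives the TE contribution $\tfrac{4\pi(-i)^{n+1}}{k_m}\sqrt{n(n+1)}\,\beta_{nl}^{TE}$, and since $(-i)^{n+1}/k_m = (-i)^n/(ik_m)$ this is exactly the TE term in the claimed formula. Reading off the coefficient of $U_{n,l}$ gives the TM contribution $-4\pi\,\tfrac{i(-i)^n}{k_m}\sqrt{n(n+1)}\sqrt{\mu_m/\eps_m}\,\beta_{nl}^{TM}$, which equals $\tfrac{4\pi(-i)^n}{ik_m}\sqrt{n(n+1)}\sqrt{\mu_m/\eps_m}\,\beta_{nl}^{TM}$ since $-i=1/i$. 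The main subtlety is to justify that the formal termwise asymptotics really yield a uniform $O(|x|^{-2})$ remainder for the series, which follows from the decay of the coefficients $\beta_{nl}^{TE},\beta_{nl}^{TM}$ guaranteed by the convergence of the expansion of $E^s$ in $\mathbb{R}^3\setminus\overline{D}$, together with uniform bounds on the Bessel-Hankel remainders on compact angular sets. Apart from this, the proof is a direct asymptotic computation.
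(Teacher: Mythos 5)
Your proof is correct and follows essentially the same approach as the paper: substitute the large-argument asymptotics of $h_n^{(1)}$, $(h_n^{(1)})'$ (equivalently $\mathcal{H}_n$) into the multipole definitions, read off the leading $|x|^{-1}$ terms, and identify the coefficient of $-\tfrac{e^{ik_m|x|}}{4\pi|x|}$ with $A_\infty$, using $(-i)^{n+1}=e^{-i(n+1)\pi/2}$ and $1/(\omega\eps_m)=k_m^{-1}\sqrt{\mu_m/\eps_m}$. The paper states the same Hankel asymptotics in the $e^{-i(n+1)\pi/2}$ notation and omits the brief remark on uniformity of the remainder, which you correctly include.
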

\proof
It is well-known that
$$
h_n^{(1)} (t) \sim \frac{1}{t} e^{it} e^{-i\frac{n+1}{2} \pi} \quad \mbox{as } t \rightarrow \infty,
$$
and
$$
(h_n^{(1)})'(t) \sim \frac{1}{t} e^{it} e^{-i\frac{n}{2} \pi}\quad \mbox{as } t \rightarrow \infty.
$$
Then one can easily see that as $|x| \rightarrow \infty$,
$$ 
E_{n,m}^{TE}(x;k_m) \sim -\frac{e^{i k_m |x|}}{k_m |x|} e^{-i\frac{n+1}{2}\pi}  \sqrt{n(n+1)}V_{n,l}(\hat{x}) 
$$
and
$$
E_{n,m}^{TM}(x;k_m) \sim -\frac{e^{i k_m |x|}}{k_m |x|} \sqrt{\frac{\mu_m}{\eps_m}}e^{-i\frac{n+1}{2} \pi} \sqrt{n(n+1)} U_{n,l}(\hat{x}).
$$
By applying these asymptotics to the series expansion of $E^s$, the conclusion follows.

\qed

A plane wave can be represented as a series expansion. The following lemma is proved in \cite{Klinken}.

\begin{lem}
Let $E^i$ be a plane wave, that is,
$E^{i}(x) = p \,e^{ik_m d\cdot x}$ with $d\in S$ and $p\cdot d=0$. 
Then we have the following series representation for a plane wave as follows:
$$
E^i(x)=
\sum_{n=1}^{\infty} \sum_{l=-n}^n \alpha^{pw,TE}_{nl} \widetilde E_{n,l}^{TE}(x;k_m) + \alpha^{pw,TM}_{nl} \widetilde E_{n,l}^{TM}(x;k_m),
$$
where
$$
\begin{cases}
\ds\alpha^{pw,TE}_{nl} = \frac{(-1)4\pi i^n}{\sqrt{n(n+1)}}  i\big( V_{n,l}(d) \cdot p\big),
\\[1em]
\ds\alpha^{pw,TM}_{nl} = \frac{(-1)4\pi i^n}{\sqrt{n(n+1)}}  \sqrt{\frac{\eps_m}{\mu_m}}\big(U_{n,l}(d)\cdot p\big).
\end{cases}
$$

\end{lem}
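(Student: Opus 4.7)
The plan is to combine the scalar Rayleigh (Jacobi--Anger) expansion with the orthogonality and completeness of the vector spherical harmonics $\{U_{n,l},V_{n,l}\}$ on the unit sphere $S$. Since $E^i$ is an entire solution of the vectorial Helmholtz equation satisfying $\nabla\cdot E^i = ik_m(p\cdot d)e^{ik_m d\cdot x}=0$ by the transversality $p\cdot d=0$, it admits a unique expansion in terms of the interior multipoles $\widetilde E^{TE}_{n,l}$ and $\widetilde E^{TM}_{n,l}$ with no longitudinal contribution. The task therefore reduces to identifying the coefficients $\alpha^{pw,TE}_{nl}$ and $\alpha^{pw,TM}_{nl}$.

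First I would apply the scalar Rayleigh expansion
$$e^{ik_m d\cdot x}=4\pi\sum_{n=0}^\infty\sum_{l=-n}^n i^n j_n(k_m|x|)\,Y_n^l(\hat x)\,\overline{Y_n^l(d)},$$
so that $E^i(x)=4\pi\sum_{n,l} i^n j_n(k_m|x|)\,\overline{Y_n^l(d)}\,p\,Y_n^l(\hat x)$. Next I would restrict both sides of the claimed identity to a sphere $\{|x|=r\}$ of arbitrary radius $r>0$ and project tangentially onto $V_{n,l}(\hat x)$ and $U_{n,l}(\hat x)$, using the $L^2_T(S)$-orthonormality of $\{U_{n,l},V_{n,l}\}$. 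The LHS projections reduce to integrals of the form
$$\int_S\bigl(p\cdot\overline{V_{n,l}(\hat x)}\bigr)Y_n^l(\hat x)\,d\sigma\quad\text{and}\quad\int_S\bigl(p\cdot\overline{U_{n,l}(\hat x)}\bigr)Y_n^l(\hat x)\,d\sigma,$$
each of which, via $U_{n,l}=\tfrac{1}{\sqrt{n(n+1)}}\nabla_S Y_n^l$ and $V_{n,l}=\hat x\times U_{n,l}$ together with the addition theorem for scalar spherical harmonics, evaluates to a constant multiple of $V_{n,l}(d)\cdot p$ and $U_{n,l}(d)\cdot p$ respectively.

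On the RHS, the TE multipole $\widetilde E^{TE}_{n,l}=-\sqrt{n(n+1)}\,j_n(k_m|x|)V_{n,l}(\hat x)$ contributes only to the $V_{n,l}$ direction, while the TM multipole $\widetilde E^{TM}_{n,l}=\tfrac{i}{\omega\eps_m}\nabla\times\widetilde H^{TM}_{n,l}$ contributes to both the $U_{n,l}$ and the radial $\hat x Y_n^l$ directions via the curl identity \eqref{curl_TM_formula}. Matching the coefficients of $V_{n,l}(\hat x)$ yields $\alpha^{pw,TE}_{nl}$ directly; matching the coefficients of $U_{n,l}(\hat x)$ (after cancelling the common radial factor, which is precisely why $r$ drops out) yields $\alpha^{pw,TM}_{nl}$.

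The main obstacle will be the careful bookkeeping of prefactors. The extra factor of $i$ in $\alpha^{pw,TE}_{nl}$ arises from combining the $i^n$ of the Rayleigh series with the $\hat x\times$ relating $V_{n,l}$ to $\nabla_S Y_n^l/\sqrt{n(n+1)}$, while the ratio $\sqrt{\eps_m/\mu_m}$ in $\alpha^{pw,TM}_{nl}$ originates from the $\tfrac{i}{\omega\eps_m}\nabla\times$ link between the TM electric and magnetic multipoles combined with the dispersion relation $k_m=\omega\sqrt{\eps_m\mu_m}$. Once these constants are aligned, the identity holds term by term, independent of $r$, and the result follows.
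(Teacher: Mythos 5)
The paper does not actually prove this lemma; it simply cites \cite{Klinken} for the result. So the comparison is between your sketch and the literature reference, not a proof in the text.

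Your overall strategy (Rayleigh/Jacobi--Anger expansion plus $L^2_T(S)$ projection onto the vector spherical harmonics) is a legitimate route, and the TE half works essentially as you say. The reason the $n$--sum on the left collapses when you project onto $V_{n,l}$ is that each Cartesian component of $V_{n,l}(\hat x)=\hat x\times U_{n,l}(\hat x)$ is a degree-$n$ scalar spherical harmonic: indeed $\hat x\times\nabla_S Y_n^l = i L Y_n^l$ with $L=-ix\times\nabla$ the angular-momentum operator, which preserves degree. Hence $p\cdot\overline{V_{n,l}}$ has pure degree $n$, Funk--Hecke gives $\int_S (p\cdot\overline{V_{n,l}(\hat x)})\,e^{ik_m r\, d\cdot\hat x}\,d\sigma = 4\pi i^n j_n(k_m r)\,\big(p\cdot\overline{V_{n,l}(d)}\big)$, and the $j_n(k_m r)$ cancels against the TE multipole on the right. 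That part is fine.

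The TM half as written does not go through. The integral you claim reduces the LHS,
$\int_S \big(p\cdot\overline{U_{n,l}(\hat x)}\big) Y_n^l(\hat x)\,d\sigma$, is in fact identically zero: writing $U_{n,l}=\tfrac{1}{\sqrt{n(n+1)}}\nabla_S Y_n^l$ and using $\nabla(r^nY_n^l)\big|_{r=1}=nY_n^l\hat x+\nabla_S Y_n^l$, one sees that each Cartesian component of $U_{n,l}$ is a linear combination of spherical harmonics of degrees $n-1$ and $n+1$ only, so it is orthogonal to $Y_n^l$. Consequently, on the LHS the projection onto $U_{n,l}$ produces two radial functions $j_{n-1}(k_m r)$ and $j_{n+1}(k_m r)$, while on the RHS the TM multipole contributes $\tfrac{i}{\omega\eps_m}\tfrac{\sqrt{n(n+1)}}{r}\mathcal{J}_n(k_m r)$. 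These are not a ``common radial factor'' that cancels; one needs the recurrence $\tfrac{\mathcal{J}_n(t)}{t}=\tfrac{(n+1)j_{n-1}(t)-n\,j_{n+1}(t)}{2n+1}$ and a matching of both the $j_{n-1}$ and $j_{n+1}$ coefficients, a step that is missing. A cleaner route for $\alpha^{pw,TM}_{nl}$ is to project instead onto the radial direction: $\hat x\cdot\widetilde E^{TE}_{n,l}=0$ while $\hat x\cdot\widetilde E^{TM}_{n,l} = \tfrac{i n(n+1)}{\omega\eps_m |x|}j_n(k_m|x|)Y_n^l(\hat x)$ involves only $j_n$, so comparing with $\hat x\cdot E^i=(\hat x\cdot p)e^{ik_m d\cdot x}$ (expanded by differentiating the Rayleigh series with respect to $d$, using $p\cdot d=0$ to kill the normal piece) extracts the TM coefficient without any recurrence gymnastics.
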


\smallskip
Now we are ready to compute the extinction cross-section $Q^{ext}$.

\begin{thm} Assume that $E^{i}(x) = p \,e^{ik_m d\cdot x}$  with $d\in S$ and $p\cdot d=0$. Let $D$ be a sphere with radius $r$.
Then the extinction cross-section is given by
\begin{align*}
Q^{ext}  
&= \sum_{n=1}^{\infty} \sum_{l=-n}^n \frac{(4\pi)^3}{k_m^2|p|^2} 
\Im\left( (-1)S_n^{TE} (V_{n,l}(d)\cdot p )^2+ i S_n^{TM}(U_{n,l}(d)\cdot p)^2 \right). 
\end{align*}
Moreover, for small $r>0$, we have
\begin{align*}
Q^{ext} 
&=  \sum_{l=-1}^1 \frac{(-1)(4\pi k_m r)^3}{k_m^2|p|^2} 
\Im\left( i\frac{2}{3}\frac{\mu_c-\mu_m}{2\mu_m+\mu_c} (V_{1,l}(d)\cdot p )^2+ \frac{2}{3}\frac{\eps_c-\eps_m}{2\eps_m+\eps_c}(U_{1,l}(d)\cdot p)^2 \right)
\\ &\quad +O((k_m r)^4). 
\end{align*}

\end{thm}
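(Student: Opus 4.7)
The plan is to chain together the three preceding lemmas (plane-wave expansion, sphere scattering with Mie coefficients $S_n^{TE},S_n^{TM}$, and scattering amplitude in terms of vector spherical harmonics), apply the optical cross-section theorem, and then perform a small-$r$ analysis of the Mie coefficients using the asymptotics \eqref{spherical_bessel_asymp} of the spherical Bessel functions.

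\smallskip\noindent\textbf{Step 1 (Exact series formula).} Starting from $E^i(x)=p\,e^{ik_m d\cdot x}$, I would apply the plane-wave lemma to get
\[
\alpha^{pw,TE}_{nl}=\frac{-4\pi\, i^{n+1}}{\sqrt{n(n+1)}}\big(V_{n,l}(d)\cdot p\big),\qquad
\alpha^{pw,TM}_{nl}=\frac{-4\pi\, i^{n}}{\sqrt{n(n+1)}}\sqrt{\tfrac{\eps_m}{\mu_m}}\big(U_{n,l}(d)\cdot p\big),
\]
and then, since $D$ is a sphere, use the Mie-scattering lemma to write
\[
E^s(x)=\sum_{n,l}\alpha^{pw,TE}_{nl}S_n^{TE}E_{n,l}^{TE}(x;k_m)+\alpha^{pw,TM}_{nl}S_n^{TM}E_{n,l}^{TM}(x;k_m).
\]
Next, I would apply the scattering amplitude lemma with $\beta^{TE}_{nl}=\alpha^{pw,TE}_{nl}S_n^{TE}$ and $\beta^{TM}_{nl}=\alpha^{pw,TM}_{nl}S_n^{TM}$, evaluate at $\hat{x}=d$, and dot with $p$; after using $(-i)^n i^n=1$ and $\sqrt{\mu_m/\eps_m}\sqrt{\eps_m/\mu_m}=1$ the factors $\sqrt{n(n+1)}$ cancel and I obtain
\[
p\cdot A_\infty(d)=\frac{(4\pi)^2}{k_m}\sum_{n=1}^{\infty}\sum_{l=-n}^{n}\Big(-S_n^{TE}(V_{n,l}(d)\cdot p)^2+iS_n^{TM}(U_{n,l}(d)\cdot p)^2\Big).
\]
Plugging this into $Q^{ext}=\frac{4\pi}{k_m}\Im\big[p\cdot A_\infty(d)/|p|^2\big]$ from the optical cross-section theorem yields the first identity of the theorem.

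\smallskip\noindent\textbf{Step 2 (Small-radius asymptotics).} For the second identity, I substitute the expansions \eqref{spherical_bessel_asymp} into the explicit formulas for $S_n^{TE}$ and $S_n^{TM}$. The key observation is that both numerator and denominator of $S_n^{TE}$ (and of $S_n^{TM}$) have leading powers that combine to give $S_n^{TE},S_n^{TM}=O((k_m r)^{2n+1})$, so only $n=1$ contributes at the order $(k_m r)^3$, while $n\geq 2$ contributes $O((k_m r)^5)$. For $n=1$ I use $j_1(t)\sim t/3$, $\mathcal{J}_1(t)\sim 2t/3$, $h^{(1)}_1(t)\sim -i/t^2$, $\mathcal{H}_1(t)\sim i/t^2$, which after cancellation of the common factor $k_c r$ give
\[
S_1^{TE}=\frac{2i(k_m r)^3(\mu_c-\mu_m)}{3(2\mu_m+\mu_c)}+O((k_m r)^5),\qquad
S_1^{TM}=\frac{2i(k_m r)^3(\eps_c-\eps_m)}{3(2\eps_m+\eps_c)}+O((k_m r)^5).
\]
Inserting these into the exact series (keeping only $n=1$ and remembering $iS_1^{TM}$ already carries the extra $i$) and collecting the factor $(4\pi k_m r)^3$ gives the claimed expansion, with the remainder $O((k_m r)^4)$ coming from the next order in the $n=1$ coefficient and from the $n=2$ terms.

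\smallskip\noindent\textbf{Anticipated difficulty.} The conceptual part is routine once the three preparatory lemmas are in hand; the bookkeeping of powers of $i$, conjugations, and the factor $\sqrt{n(n+1)}$ (which must cancel cleanly with the $1/\sqrt{n(n+1)}$ in $\alpha^{pw}_{nl}$ and the $\sqrt{n(n+1)}$ in the far-field asymptotics of $E_{n,l}^{TE},E_{n,l}^{TM}$) is the main place where sign or phase errors can enter. Similarly, in the Mie expansion, tracking which pieces are leading order as $r\to 0$, and verifying that the apparent leading $O(r^2)$ terms in numerator and denominator combine to give the correct $O(r^3)$ behaviour for $S_1^{TE,TM}$, requires care with the next-order term in $\mathcal{J}_1$ and $\mathcal{H}_1$; this is the main technical obstacle.
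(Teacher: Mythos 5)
Your proposal is correct and follows the paper's proof exactly: chain the plane-wave expansion, the Mie coefficients $S_n^{TE},S_n^{TM}$, and the far-field lemma to obtain $p\cdot A_\infty(d)$, apply the optical cross-section theorem, and then expand the Mie coefficients via \eqref{spherical_bessel_asymp}, keeping only $n=1$ at order $(k_m r)^3$. The only cosmetic difference is that you state the remainder in $S_1^{TE},S_1^{TM}$ as $O((k_m r)^5)$ while the paper writes the weaker $O(r^4)$; both suffice for the claimed $O((k_m r)^4)$ error in $Q^{ext}$.
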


\proof
Let us first compute the scattering amplitude $A_\infty$ when $E^i$ is a plane wave. From 
\begin{align*}
A_\infty(\hat{x}) &= 
\sum_{n=1}^{\infty} \sum_{l=-n}^n\frac{4\pi(-i)^{n}}{i k_m}\sqrt{n(n+1)} 
\\
&\qquad\times
\left(\alpha^{pw,TE}_{nl} S^{TE}_{n}  V_{n,l}(\hat{x}) + \sqrt{\frac{\mu_m}{\eps_m}}\alpha^{pw,TM}_{nl} S^{TM}_{n}  U_{n,l}\right)
\\
&=
\sum_{n=1}^{\infty} \sum_{l=-n}^n \frac{(4\pi)^2}{k_m} 
\left( (-1)S_n^{TE} (V_{n,l}(d)\cdot p ) V_{n,l}+i S_n^{TM}(U_{n,l}(d)\cdot p) U_{n,l} \right).
\end{align*}
Therefore, we have
\begin{align*}
Q^{ext} &= \f{4\pi}{k_m}\Im\left[\f{p\cdot A_{\infty}(d)}{|p|^2}\right]
\\
&= \sum_{n=1}^{\infty} \sum_{l=-n}^n \frac{(4\pi)^3}{k_m^2|p|^2} 
\Im\left( (-1)S_n^{TE} (V_{n,l}(d)\cdot p )^2+ i S_n^{TM}(U_{n,l}(d)\cdot p)^2 \right). 
\end{align*}

Now we assume that $r \ll 1$. By applying \eqref{spherical_bessel_asymp}, one can easily see that
\begin{align*}
S_1^{TE} &= i\frac{2}{3}\frac{(\mu_c-\mu_m)(k_m r)^3}{2\mu_m+\mu_c} +O(r^4),\\
S_1^{TM} &= i\frac{2}{3}\frac{(\eps_c-\eps_m)(k_m r)^3}{2\eps_m+\eps_c} +O(r^4),\\[0.5em]
S_n^{TE},&\,S_n^{TM} = O(r^4), \quad \mbox{for } n\geq2.
\end{align*}
Therefore, we obtain, up to an error term of the order $O(r^4)$,
\begin{align*}
Q^{ext} 
&=  \sum_{l=-1}^1 \frac{(-1)(4\pi)^3}{k_m^2|p|^2} 
\Im\left( i\frac{2}{3}\frac{(\mu_c-\mu_m)(k_m r)^3}{2\mu_m+\mu_c} (V_{1,l}(d)\cdot p )^2+ \frac{2}{3}\frac{(\eps_c-\eps_m)(k_m r)^3}{2\eps_m+\eps_c}(U_{1,l}(d)\cdot p)^2 \right). 
\end{align*}
The proof is complete.

\qed

\section{Explicit computations for a spherical shell} \label{sectshell}

\subsection{Explicit representation of boundary integral operators}
Let $D_s$ and $D_c$ be a spherical shell with radius $r_s$ and $r_c$ with $r_s>r_c>0$.
Let
$$
(\eps,\mu)=\begin{cases}
(\eps_m,\mu_m) &\quad \mbox{in }D_c,
\\
(\eps_s,\mu_s) &\quad  \mbox{in }D_s\setminus\bar{D_c},
\\
(\eps_m,\mu_m) &\quad  \mbox{in }\mathbb{R}^3\setminus \bar{D_s}.
\end{cases}
$$
Let
$$
\rho=\frac{r_c}{r_s}.
$$
The solution to the transmission problem can be represented as follows
\be 
E(x) = 
\left\{ \begin{array}{ll}
\mu_c\nabla\times\vec{\mathcal{S}}_{D_s}^{k_c}[\psi_s](x) + \nabla\times\nabla\times\vec{\mathcal{S}}_{D_s}^{k_c}[\phi_s](x)
\\
\qquad\qquad
+\mu_c\nabla\times\vec{\mathcal{S}}_{D_c}^{k_c}[\psi_c](x) + \nabla\times\nabla\times\vec{\mathcal{S}}_{D_c}^{k_c}[\phi_c](x)
 & \quad x\in D_c,
\\[0.5em]
\mu_s\nabla\times\vec{\mathcal{S}}_{D_s}^{k_s}[\psi_s](x) + \nabla\times\nabla\times\vec{\mathcal{S}}_{D_s}^{k_s}[\phi_s](x) 
\\
\qquad\qquad
+\mu_s\nabla\times\vec{\mathcal{S}}_{D_c}^{k_s}[\psi_c](x) + \nabla\times\nabla\times\vec{\mathcal{S}}_{D_c}^{k_s}[\phi_c](x)
& \quad x\in D_s \setminus \bar{D_c},
\\[0.5em]
E^{i}+\mu_m\nabla\times\vec{\mathcal{S}}_{D_s}^{k_m}[\psi_s](x) + \nabla\times\nabla\times\vec{\mathcal{S}}_{D_s}^{k_m}[\phi_s](x) 
\\
\qquad \qquad +\mu_m\nabla\times\vec{\mathcal{S}}_{D_c}^{k_m}[\psi_c](x) + \nabla\times\nabla\times\vec{\mathcal{S}}_{D_c}^{k_m}[\phi_c](x) 
& \quad x\in \mathbb{R}^3\backslash\bar{D_s},
\end{array}
\right.
\ee
and
\be
H(x) = -\f{i}{\om\mu_D}(\nabla\times E)(x) \quad x\in \mathbb{R}^3\backslash \p D,
\ee
where the pair $(\psi_s,\phi_s,\psi_c,\phi_c)\in \big(H^{-\f{1}{2}}_T(\textnormal{div}, \p D_s)\big)^2 \times \big(H^{-\f{1}{2}}_T(\textnormal{div}, \p D_c)\big)^2$ is the unique solution to
$$W^{sh}
\left(\begin{array}{c}
\psi_s\\
\phi_s\\
\psi_c\\
\phi_c
\end{array}\right):=
\left( \begin{array}{cc}
W^{sh}_{11} & W^{sh}_{12} \\
W^{sh}_{21} & W^{sh}_{22}
\end{array}\right)
\left(\begin{array}{c}
\psi_s\\
\phi_s\\
\psi_c\\
\phi_c
\end{array}\right)
=
\left(\begin{array}{c}
\nu\times E^i \\
i\omega\nu\times H^i\\
0\\
0
\end{array}\right)
$$
with
\be 
W^{sh}_{11}=\left( \begin{array}{cc}
\df{\mu_s + \mu_m}{2}Id + \mu_s\mathcal{M}_{D_s}^{k_s}-\mu_m\mathcal{M}_{D_s}^{k_m} & \mathcal{L}_{D_s}^{k_s}-\mathcal{L}_{D_s}^{k_m}\\
\mathcal{L}_{D_s}^{k_s}-\mathcal{L}_{D_s}^{k_m} & \left(\df{k_s^2}{2\mu_s}+\df{k_m^2}{2\mu_m}\right)Id + \df{k_s^2}{\mu_s}\mathcal{M}_{D_s}^{k_s}-\df{k_m^2}{\mu_m}\mathcal{M}_{D_s}^{k_m}  
\end{array} \right),
\ee
\be 
W^{sh}_{12}=\left( \begin{array}{cc}
\mu_s \nu \times\nabla\times \vec{\mathcal{S}}_{D_c}^{k_s}
-\mu_m \nu \times\nabla\times \vec{\mathcal{S}}_{D_c}^{k_m}
&
\nu \times\nabla\times\nabla\times \vec{\mathcal{S}}_{D_c}^{k_s}
-\nu \times\nabla\times\nabla\times \vec{\mathcal{S}}_{D_c}^{k_m}
\\
\nu \times\nabla\times\nabla\times \vec{\mathcal{S}}_{D_c}^{k_s}
-\nu \times\nabla\times\nabla\times \vec{\mathcal{S}}_{D_c}^{k_m}
& 
\df{k_s^2}{\mu_s} \nu \times\nabla\times \vec{\mathcal{S}}_{D_c}^{k_s}
-\df{k_m^2}{\mu_m} \nu \times\nabla\times \vec{\mathcal{S}}_{D_c}^{k_m}
\end{array} \right)\bigg|_{\p D_s} , 
\ee
\be 
W^{sh}_{21}=\left( \begin{array}{cc}
-\mu_c \nu \times\nabla\times \vec{\mathcal{S}}_{D_s}^{k_c}+\mu_s \nu \times\nabla\times \vec{\mathcal{S}}_{D_s}^{k_s}&
-\nu \times\nabla\times\nabla\times \vec{\mathcal{S}}_{D_s}^{k_c}+\nu \times\nabla\times\nabla\times \vec{\mathcal{S}}_{D_s}^{k_s}
\\
-\nu \times\nabla\times\nabla\times \vec{\mathcal{S}}_{D_s}^{k_c}+\nu \times\nabla\times\nabla\times \vec{\mathcal{S}}_{D_s}^{k_s}
 & 
 -\df{k_c^2}{\mu_c} \nu \times\nabla\times \vec{\mathcal{S}}_{D_s}^{k_c}
 +\df{k_s^2}{\mu_s} \nu \times\nabla\times \vec{\mathcal{S}}_{D_s}^{k_s}
\end{array} \right)\bigg|_{\p D_c} , 
\ee
\be 
W^{sh}_{22}=\left( \begin{array}{cc}
-\df{\mu_c + \mu_s}{2}Id - \mu_c\mathcal{M}_{D_c}^{k_c}+\mu_s\mathcal{M}_{D_c}^{k_s} & -\mathcal{L}_{D_c}^{k_c}+\mathcal{L}_{D_c}^{k_s}\\
-\mathcal{L}_{D_c}^{k_c}+\mathcal{L}_{D_c}^{k_s} & -\left(\df{k_c^2}{2\mu_c}+\df{k_s^2}{2\mu_s}\right)Id -\df{k_c^2}{\mu_c}\mathcal{M}_{D_c}^{k_c}+\df{k_s^2}{\mu_s}\mathcal{M}_{D_c}^{k_s}  
\end{array} \right).
\ee

Note that $W^{sh}_{11}$ and $W^{sh}_{22}$ are similar to the operator in left-hand side of \eqref{eq-Maxwell_System}. In the previous section for the sphere case, we have already obtained the matrix representation of this operator and its asymptotic expansion. 

By Lemma \ref{lem_SLayerpotential_sphere_formula}, we can represent $\nu\times\nabla\times \vec{\mathcal{S}}^k_{D}|_{|x|=r'}$ and $\nu\times\nabla\times\nabla\times \vec{\mathcal{S}}^k_{D}|_{|x|=r'}$ in a matrix form as follows(using $U_{n,m},V_{n,m}$ as basis): 

\noindent(i) For $r'>r$,
\begin{align}
\nu\times\nabla\times \vec{\mathcal{S}}^k_{D}|_{|x|=r'} &= 
\left( \begin{array}{cc}
(-i k r)  \mathcal{J}_n(k r)h_n^{(1)}(k r') & 0
\\
0 & ik \frac{r^2}{r'} j_n(kr)\mathcal{H}_n(k r') 
\end{array} \right),
\\
\nu\times\nabla\times\nabla\times \vec{\mathcal{S}}^k_{D}|_{|x|=r'} &= 
\left( \begin{array}{cc}
0 &ik (kr)^2 j_n(k r) h_n^{(1)}(k r')
\\
- ik \frac{r}{r'}\mathcal{J}_n(kr)\mathcal{H}_n(k r') &0
\end{array} \right);
\end{align}
(ii) For $r'<r$,
\begin{align}
\nu\times\nabla\times \vec{\mathcal{S}}^k_{D}|_{|x|=r'} &= 
\left( \begin{array}{cc}
(-i k r)  {j}_n(k r')\mathcal{H}_n(kr) & 0
\\
0 & ik \frac{r^2}{r'} \mathcal{J}_n(kr')\mathcal{h}^{(1)}_n(k r) 
\end{array} \right),
\\
\nu\times\nabla\times\nabla\times \vec{\mathcal{S}}^k_{D}|_{|x|=r'} &= 
\left( \begin{array}{cc}
0 &ik (kr)^2 j_n(k r') h_n^{(1)}(k r)
\\
- ik \frac{r}{r'}\mathcal{J}_n(kr')\mathcal{H}_n(k r) &0
\end{array} \right).
\end{align}
Using the above formulas,
the matrix representation of the operators $W^{sh}_{12}$ and $W^{sh}_{21}$ can be easily obtained. 

We now consider scaling of $W^{sh}$.
First we need some definitions.
Let $D_s = z + r_s B_s$ where $B_s$ contains the origin and $|B_s| = O(1)$. 
Let $B_c$ be defined in a similar way.
For any $x\in \p D_s$ (or $\p D_c$), let $\widetilde{x} = \f{x-z}{r_s}\in \p B_s$ (or $\p B_c$ with $r_s$ replaced by $r_c$) and define for each function $f$ defined on $\p D_s$ (or $\p D_c$), a corresponding function defined on $B$ as follows
\be 
\eta_s(f)(\widetilde{x}) = f(z + r_s \widetilde{x}),
\quad
\eta_c(f)(\widetilde{x}) = f(z + r_c \widetilde{x}).
\ee
Then, in a similar way to the sphere case, let us write
$$W_B^{sh}(r_s)
\left(\begin{array}{c}
\eta_s(\psi_s)\\
\omega\eta_s(\phi_s)\\
\eta_c(\psi_c)\\
\omega\eta_c(\phi_c)
\end{array}\right)
=
\left(\begin{array}{c}
\frac{\eta(\nu\times E^i)}{\mu_m-\mu_s} \\
\frac{\eta(i\nu\times H^i)}{\eps_m-\eps_s}\\
0\\
0
\end{array}\right).
$$

Using $(U_{n,m},V_{n,m},U_{n,m},V_{n,m})\times(U_{n,m},V_{n,m},U_{n,m},V_{n,m})$ as basis, we can represent $W^{sh}_B(r_s)$ in a $8\times 8$ matrix form in a subspace $H_{n,m}(\p B_s) \times H_{n,m}(\p B_c)$.  
Then, by using 
\eqref{eqn_bessel_product_asymptotic}, 
their asymptotic expansion can also be obtained.

Here, the resulting asymptotics of the matrix $W^{sh}_{B}$ are given as follows. Write
\be
\mathcal{W}^{sh}_B(r_s) = \mathcal{W}^{sh}_{B,0} + r_s \mathcal{W}^{sh}_{B,1} + r_s^2 \mathcal{W}^{sh}_{B,2} + O(r_s^3),
\ee
where
\begin{align}
\mathcal{W}_{B,0}^{sh}=\begin{pmatrix}
\Lambda_{\mu,\eps} & \\
& \Lambda_{\mu,\eps}
\end{pmatrix}
+
\begin{pmatrix}
  P_{0,n} & Q_{0,n} \\
R_{0,n} &-P_{0,n}
\end{pmatrix} ,
\end{align}
$$
\mathcal{W}_{B,1}^{sh} = \begin{pmatrix}
 P_{1,n} & Q_{1,n} \\
R_{1,n} & -P_{1,n}
\end{pmatrix}
, \quad
\mathcal{W}_{B,2}^{sh} = \begin{pmatrix}
 P_{2,n} & Q_{2,n} \\
R_{2,n} & -P_{2,n}
\end{pmatrix} .
$$
Here, the matrix $P_{j,n},Q_{j,n}$ and $R_{j,n}$ are given by
\begin{align*}
\Lambda_{\mu,\eps} = \begin{pmatrix}
 \lambda_\mu & & &\\ 
  & \lambda_\mu & & \\
  & & \lambda_\eps &\\
  & & & \lambda_\eps
 \end{pmatrix}
 ,
 \quad
 P_{0,n}=
 \begin{pmatrix}
 p_n & & & \\
 & -p_n & &\\
 & & p_n & \\
 & & & -p_n
 \end{pmatrix},
\end{align*}
\begin{align*}
Q_{0,n} = \rho^2\begin{pmatrix}
 g_n & & &\\ 
  & f_n & & \\
  & & g_n &\\
  & & & f_n
 \end{pmatrix}
 ,
 \quad
 R_{0,n}=\begin{pmatrix}
 f_n & & &\\ 
  & g_n & & \\
  & & f_n &\\
  & & & g_n
 \end{pmatrix},
\end{align*}
\begin{align*}
P_{1,n}&=\omega 
\begin{pmatrix}
& & & C_{\mu} p_n
\\
\ds  &  & 
 C_{\mu}q_n & 
\\
  &   C_{\eps} p_n &  & 
\\
C_{\eps}q_n &  &  & 
\end{pmatrix},
\quad
P_{2,n}=\omega^2\begin{pmatrix}
 D_\mu r_n
& 
&
&
\\
\ds  & 
 D_\mu s_n
 & 
 & 
\\
\ds  &  & 
 D_\eps r_n
 & 
\\
 &  &  & 
 D_\eps s_n
\end{pmatrix},
\end{align*}
\begin{align*}
Q_{1,n}&=
{\omega}\rho
\left(\begin{array}{cccc}
 &  & &  C_\mu \tilde p_n \\
 &  &  C_\mu \tilde q_n&  \\
 & C_\eps \tilde p_n &  &  \\
C_\eps\tilde q_n &  & &
\end{array}\right),
\quad
Q_{2,n}=\omega^2 \rho
\left(\begin{array}{cccc}
D_\mu \tilde r_n
&  &   &  \\
 & D_\mu\tilde s_n &  &  \\
 &  & D_\eps\tilde r_n &  \\
 &  & & D_\eps \tilde s_n
\end{array}\right),
\end{align*}
\begin{align*}
R_{1,n}=(-1)\omega\rho^{-1}\left(\begin{array}{cccc}
 &  &  &  C_\mu \tilde p_n \\
 &  &  C_\mu \tilde q_n&  \\
 & C_\eps \tilde p_n &  &  \\
C_\eps\tilde q_n &  & &
\end{array}\right), \quad
R_{2,n}=\omega^2 \rho^{-1}
\left(\begin{array}{cccc}
D_\mu
\tilde s_n&  &   &  \\
 & -D_\mu\tilde r_n &  &  \\
 &  & D_\eps\tilde s_n &  \\
 &  & & -D_\eps\tilde r_n
\end{array}\right).
\end{align*}

Here, $p_n,q_n,r_n,s_n$ are defined as \eqref{eqn_pqrs_def} and 
$\tilde p_n,\tilde q_n,\tilde r_n,\tilde s_n,D_\mu$ and $D_\eps$ are defined as follows:
\begin{align}
f_n &= \rho^{n}\frac{n}{2n+1}, \quad g_n = \rho^{n-1}\frac{n+1}{2n+1},
\\
\tilde p_n &= \frac{1}{2n+1}\rho^{n+1},
\\
\tilde q_n &= \frac{(n+1)(n-2)}{2(2n-1)(2n+1)}\rho^{n} - \frac{n(n+3)}{2(2n+1)(2n+3)}\rho^{n+2},
\\
\tilde r_n &= -\frac{n+1}{2(2n-1)(2n+1)}\rho^n+\frac{(n+3)}{2(2n+1)(2n+3)}\rho^{n+2},
\\
\tilde s_n &= -\frac{n-2}{2(2n-1)(2n+1)}\rho^{n+1}+\frac{n}{2(2n+1)(2n+3)}\rho^{n+3},
\end{align}
and 
\begin{align}
D_{\mu} &= \frac{\eps_s\mu_s^2-\eps_m \mu_m^2}{\mu_m-\mu_s}, \quad D_\eps=\frac{\eps_s^2\mu_s-\eps_m^2 \mu_m}{\eps_m-\eps_s}.
\end{align}

\subsection{Asymptotic behavior of the spectrum of $\mathcal{W}_B^{sh}(r_s)$}
Let us define
$$
\lambda^{sh}_n = \frac{1}{2(2n+1)}\sqrt{1+4n(n+1)\rho^{2n+1}}.
$$
Note that $\pm\lambda^{sh}_n$ are eigenvalues of the Neumann-Poincar\'e operator on the shell.

It turns out that the eigenvalues of $W_{B,0}^{sh}$ are as follows
$$
\lambda_\mu + \lambda^{sh}_n, \quad
\lambda_\mu - \lambda^{sh}_n, \quad
\lambda_\eps + \lambda^{sh}_n, \quad
\lambda_\eps - \lambda^{sh}_n,
$$ for 
$n=0,1,2,...$,
and their multiplicities is $2$. Their associated eigenfunctions are as follows:
\begin{align*}
\lambda_\mu + \lambda^{sh}_n \quad &\longrightarrow \quad E_{1}^0:=(\lambda^{sh}_n + p_n)\mathbf{e}_1 + f_n \mathbf{e}_5, \quad E_2^0:=(\lambda^{sh}_n - p_n)\mathbf{e}_2 + g_n \mathbf{e}_6,
\\
\lambda_\mu - \lambda^{sh}_n \quad &\longrightarrow \quad E_3^0:=(-\lambda^{sh}_n + p_n)\mathbf{e}_1 + f_n \mathbf{e}_5, \quad E_4^0:=(-\lambda^{sh}_n -p_n) \mathbf{e}_2 +g_n\mathbf{e}_6,
\\
\lambda_\eps + \lambda^{sh}_n \quad &\longrightarrow \quad E_5^0:=(\lambda^{sh}_n + p_n) \mathbf{e}_3+ f_n\mathbf{e}_7, 
\quad
E_6^0:=(\lambda^{sh}_n - p_n) \mathbf{e}_4+ g_n\mathbf{e}_8,
\\
\lambda_\eps - \lambda^{sh}_n \quad &\longrightarrow \quad E_7^0:=(-\lambda^{sh}_n + p_n) \mathbf{e}_3+ f_n\mathbf{e}_7,
\quad
E_8^0:=
(-\lambda^{sh}_n - p_n) \mathbf{e}_4+ g_n\mathbf{e}_8,
\end{align*}
where $\{\mathbf{e}_i\}_{i=1}^8$ is standard unit basis in $\mathbb{R}^8$.

To derive asymptotic expansions  of the eigenvalues, we apply 
degenerate eigenvalue perturbation theory (since the multiplicity of each of these eigenvalues is $2$). To state the result, we need some definitions.
Let 
\begin{align*}
T_{16,n} &= C_\eps\frac{(L-p_n)a_{1,n}-b_{1,n}}{|E^0_1| |E^0_6|}
,\quad
T_{18,n} = C_\eps\frac{(-L-p_n)a_{1,n}-b_{1,n}}{|E^0_1| |E^0_8|},
\\
T_{25,n} &= C_\eps\frac{(L+p_n)a_{2,n}-b_{2,n}}{|E^0_2| |E^0_5|}
,\quad
T_{27,n} = C_\eps\frac{(-L+p_n)a_{2,n}-b_{2,n}}{|E^0_2| |E^0_7|},
\\
T_{36,n} &= C_\eps\frac{(L-p_n)a_{3,n}-b_{3,n}}{|E^0_3| |E^0_6|},
\quad
T_{38,n} = C_\eps\frac{(-L-p_n)a_{3,n}-b_{3,n}}{|E^0_3| |E^0_8|},
\\
T_{45,n} &= C_\eps\frac{(L+p_n)a_{4,n}-b_{4,n}}{|E^0_4| |E^0_5|}
,\quad
T_{47,n} = C_\eps\frac{(-L+p_n)a_{4,n}-b_{4,n}}{|E^0_4| |E^0_7|},
\\
T_{52,n} &= \frac{C_\mu}{C_\eps}T_{16,n},
\quad
T_{54,n} = \frac{C_\mu}{C_\eps}T_{18,n},
\quad
T_{61,n}=\frac{C_\mu}{C_\eps}T_{25,n}, \quad T_{63,n}=\frac{C_\mu}{C_\eps}T_{27,n}, 
\\
T_{72,n}&=\frac{C_\mu}{C_\eps}T_{36,n}, \quad T_{74,n}=\frac{C_\mu}{C_\eps}T_{38,n}, 
\quad
T_{81,n}=\frac{C_\mu}{C_\eps}T_{45,n}, \quad T_{83,n}=\frac{C_\mu}{C_\eps}T_{47,n},
\end{align*}

where
\begin{align*}
a_{1,n} &= (\lambda_n^{sh}+p_n)q_n + \rho f_n \tilde q_n, 
\\
a_{2,n} &= (\lambda_n^{sh}-p_n)p_n + \rho g_n \tilde p_n, 
\\
a_{3,n}&= (-\lambda_n^{sh}+p_n)q_n + \rho f_n \tilde q_n, 
\\
a_{4,n}&= (-\lambda_n^{sh}-p_n)p_n + \rho g_n \tilde p_n, 
\end{align*}
and
\begin{align*}
b_{1,n} &= f_n g_n q_n + \rho^{-1}(\lambda^{sh}_n + p_n) 
g_n\tilde q_n,
\\
b_{2,n} &=f_n g_n p_n + \rho^{-1}(\lambda^{sh}_n - p_n) f_n\tilde  p_n,
\\
b_{3,n}&= f_n g_n q_n + \rho^{-1}(-\lambda^{sh}_n + p_n) g_n \tilde q_n ,
\\
b_{4,n}&= f_n g_n p_n + \rho^{-1}(-\lambda^{sh}_n - p_n) f_n \tilde p_n.
\end{align*}
We also define
\begin{align*}
K_{1,n} &= D_\mu\frac{(\lambda_n^{sh}+p_n)((\lambda_n^{sh}+p_n)r_n+\rho f_n \tilde r_n) + f_n((\lambda_n^{sh}+p_n)\rho^{-1} \tilde s_n -f_n r_n)}{|E^0_1|^2},
\\
K_{2,n} &=D_\mu\frac{g_n((-\lambda_n^{sh}+p_n)\rho^{-1} \tilde r_n-g_n s_n )+(\lambda_n^{sh}-p_n)((\lambda_n^{sh}-p_n)s_n + \rho g_n\tilde s_n)}{|E^0_2|^2},
\\
K_{3,n} &=D_\mu\frac{(-\lambda_n^{sh}+p_n)((-\lambda_n^{sh}+p_n)r_n+\rho f_n \tilde r_n) + f_n((-\lambda_n^{sh}+p_n)\rho^{-1} \tilde s_n -f_n r_n)}{|E^0_3|^2},
\\
K_{4,n} &=D_\mu\frac{g_n((\lambda_n^{sh}+p_n)\rho^{-1} \tilde r_n-g_n s_n )+(-\lambda_n^{sh}-p_n)((-\lambda_n^{sh}-p_n)s_n + \rho g_n\tilde s_n)}{|E^0_4|^2},
\\
K_{5,n}&=\frac{D_\eps}{D_\mu}K_{1,n},
\quad
K_{6,n}=\frac{D_\eps}{D_\mu}K_{2,n},
\quad
K_{7,n}=\frac{D_\eps}{D_\mu}K_{3,n},
\quad
K_{8,n}=\frac{D_\eps}{D_\mu}K_{4,n}.
\end{align*}
Now we are ready to state the result. The followings are asymptotics of eigenvalues of $W_{B}^{sh}(r_s)$
\begin{align*}
&\lambda_\mu + \lambda_\eps+ (r_s \omega)^2 \left( \frac{T_{16,n}T_{61,n}}{\lambda_\mu-\lambda_\eps}+\frac{T_{18,n}T_{81,n}}{\lambda_\mu-\lambda_\eps+2\lambda_n^{sh}} + K_{1,n}\right) +O(r_s^3),
\\
&\lambda_\mu + \lambda_\eps+ (r_s \omega)^2 \left( \frac{T_{16,n}T_{61,n}}{\lambda_\mu-\lambda_\eps}+\frac{T_{18,n}T_{81,n}}{\lambda_\mu-\lambda_\eps+2\lambda_n^{sh}} + K_{2,n}\right)+O(r_s^3),
\\
&
\lambda_\mu - \lambda_\eps+ (r_s \omega)^2 \left( \frac{T_{36,n}T_{63,n}}{\lambda_\mu-\lambda_\eps-2\lambda_n^{sh}}+\frac{T_{38,n}T_{83,n}}{\lambda_\mu-\lambda_\eps} + K_{3,n}\right)+O(r_s^3),
\\
&\lambda_\mu - \lambda_\eps+ (r_s \omega)^2 \left( \frac{T_{36,n}T_{63,n}}{\lambda_\mu-\lambda_\eps-2\lambda_n^{sh}}+\frac{T_{38,n}T_{83,n}}{\lambda_\mu-\lambda_\eps} + K_{4,n}\right)+O(r_s^3),
\\
&\lambda_\eps + \lambda_\mu+ (r_s \omega)^2 \left( \frac{T_{52,n}T_{25,n}}{\lambda_\eps-\lambda_\mu}+\frac{T_{54,n}T_{45,n}}{\lambda_\eps-\lambda_\mu+2\lambda_n^{sh}} + K_{5,n}\right)+O(r_s^3),
\\
&\lambda_\eps + \lambda_\mu+ (r_s \omega)^2 \left( \frac{T_{52,n}T_{25,n}}{\lambda_\eps-\lambda_\mu}+\frac{T_{54,n}T_{45,n}}{\lambda_\eps-\lambda_\mu+2\lambda_n^{sh}} + K_{6,n}\right)+O(r_s^3),
\\
&\lambda_\eps - \lambda_\mu+ (r_s \omega)^2 \left( \frac{T_{72,n}T_{27,n}}{\lambda_\eps-\lambda_\mu-2\lambda_n^{sh}}+\frac{T_{74,n}T_{47,n}}{\lambda_\eps-\lambda_\mu} + K_{7,n}\right)+O(r_s^3),
\\
&\lambda_\eps - \lambda_\mu+ (r_s \omega)^2 \left( \frac{T_{72,n}T_{27,n}}{\lambda_\eps-\lambda_\mu-2\lambda_n^{sh}}+\frac{T_{74,n}T_{47,n}}{\lambda_\eps-\lambda_\mu} + K_{8,n}\right)+O(r_s^3).
\end{align*}
We also have the following asymptotic expansions of the eigenfunctions:
\begin{align*}
&E^0_1+ r_s \omega\left( \frac{T_{16,n}}{\lambda_\mu-\lambda_\eps}E_6^0+\frac{T_{18,n}}{\lambda_\mu-\lambda_\eps+2\lambda_n^{sh}}E_8^0 \right) +O(r_s^2),
\\
&E^0_2+ r_s \omega \left( \frac{T_{25,n}}{\lambda_\mu-\lambda_\eps}E^0_5+\frac{T_{27,n}}{\lambda_\mu-\lambda_\eps+2\lambda_n^{sh}}E^0_7\right)+O(r_s^2),
\\
&
E^0_3+ r_s \omega \left( \frac{T_{36,n}}{\lambda_\mu-\lambda_\eps-2\lambda_n^{sh}}E^0_6+\frac{T_{38,n}}{\lambda_\mu-\lambda_\eps}E^0_8\right)+O(r_s^2),
\\
&E^0_4+ r_s \omega \left( \frac{T_{45,n}}{\lambda_\mu-\lambda_\eps-2\lambda_n^{sh}}E^0_5+\frac{T_{47,n}}{\lambda_\mu-\lambda_\eps} E^0_7\right)+O(r_s^2),
\\
&E^0_5+ r_s \omega\left( \frac{T_{52,n}}{\lambda_\mu-\lambda_\eps}E_2^0+\frac{T_{54,n}}{\lambda_\mu-\lambda_\eps+2\lambda_n^{sh}}E_4^0 \right) +O(r_s^2),
\\
&E^0_6+ r_s \omega \left( \frac{T_{61,n}}{\lambda_\mu-\lambda_\eps}E^0_1+\frac{T_{63,n}}{\lambda_\mu-\lambda_\eps+2\lambda_n^{sh}}E^0_3\right)+O(r_s^2),
\\
&
E^0_7+ r_s \omega \left( \frac{T_{72,n}}{\lambda_\mu-\lambda_\eps-2\lambda_n^{sh}}E^0_2+\frac{T_{74,n}}{\lambda_\mu-\lambda_\eps}E^0_4\right)+O(r_s^2),
\\
&E^0_8+ r_s \omega \left( \frac{T_{81,n}}{\lambda_\mu-\lambda_\eps-2\lambda_n^{sh}}E^0_1+\frac{T_{83,n}}{\lambda_\mu-\lambda_\eps} E^0_3\right)+O(r_s^2).
\end{align*}

Interestingly, the first-order term (of order $\delta$) is still zero in the asypmtotic expansions of the eigenvalues. This is due to the fact that degenerate eigenfunctions does not interact with each other.

\section{Plasmonic resonances for the anisotropic problem}
\label{sec-Anisitrop max}
In this section, we consider the scattering problem of a time-harmonic wave $u^i$, incident on a plasmonic anisotropic nanoparticle. The homogeneous medium is characterized by electric permittivity $\varepsilon_m$, while the particle occupying a bounded and simply connected domain $\Omega\Subset\mathbb{R}^3$ of class $\mathcal{C}^{1,\alpha}$ for $0<\alpha<1$ is characterized by electric anisotropic permittivity $A$. We consider $A$ to be a positive-definite symmetric matrix.

In the quasi-static regime the problem can be modeled as follows
\be \label{eq-Anisitropic_Cond_Eq max}
\begin{array}{c}
\nabla\cdot\big(\eps_m Id \chi(\R^3 \backslash \bar{\Omega}) + A \chi(\Omega)\big)\nabla u = 0,\\
\nm
|u-u^i| = O(|x|^{-2}), \quad |x| \rightarrow + \infty, 
\end{array}
\ee
where $\chi$ denotes the characteristic function and $u^i$ is a harmonic function in $\R^3$.\\
We are interested in finding the plasmonic resonances for problem \eqref{eq-Anisitropic_Cond_Eq max}.\\
First, introduce the fundamental solution to the operator $\nabla\cdot A \nabla$ in dimension three
\beas
G^A(x)= - \f{1}{4\pi \sqrt{\textnormal{det}(A)} |A_*x |}
\eeas
with $A_* = \sqrt{A^{-1}}$. From now on we will note $G^A(x,y):=G^A(x-y)$.\\
The single-layer potential associated with $A$ is
\beas \label{eq-SLP_A}
\mathcal{S}_{\Omega}^{A} [\varphi]: H^{-\f{1}{2}}(\p \Omega) &\longrightarrow & H^{\f{1}{2}}(\p \Omega) \\
\varphi &\longmapsto & \mathcal{S}_{\Omega}^{A} [\varphi](x) = \int_{\p \Omega} G^A(x, y) \varphi(y) d\sigma(y),  \quad x \in \R^3.\nonumber
\eeas
We can represent the unique solution \cite{book2}  to \eqref{eq-Anisitropic_Cond_Eq max} in the following form:
\beas
u(x) = \left\{
\begin{array}{lr}
u^i + \mathcal{S}_{\Omega} [\psi], & \quad x \in \R^3 \backslash \bar{\Omega},\\
\mathcal{S}_{\Omega}^{A} [\phi] ,  & \quad x \in {\Omega},
\end{array}\right.
\eeas
where $(\psi,\phi)\in \big(H^{-\f{1}{2}}(\p \Omega)\big)^2$ is the unique solution to the following system of integral equations on $\partial \Omega$:
\be \label{Anisotropic-syst max}
\left\{
\begin{array}{lr}
\mathcal{S}_{\Omega} [\psi] - \mathcal{S}_{\Omega}^{A} [\phi] &= -u^i,  \\
\nm
\eps_m\df{\p \mathcal{S}_{\Omega}[\psi]}{\p \nu}\Big\vert_{+} - \nu\cdot A\nabla\mathcal{S}^A_{\Omega}[\phi]\Big\vert_{-} &= -\eps_m\df{\p u^i}{\p \nu}.
\end{array} \right.
\ee

\begin{lem} \label{lem-anisotropic_S_A max}
The operator $\mathcal{S}_{\Omega}^A: H^{-\f{1}{2}}(\p \Omega) \rightarrow H^{\f{1}{2}}(\p \Omega)$ is invertible. Moreover, we have the jump formula
\beas
\nu\cdot A\nabla\mathcal{S}^A_{\Omega}\Big\vert_{\pm} = \pm\f{1}{2}Id + (\mathcal{K}_{\Omega}^A)^*,
\eeas
with
\beas
(\mathcal{K}^A_{\Omega})^*[\varphi](x) = \int_{\p {\Omega}}-\f{\big(x-y,\nu(x)\big)}{4\pi \sqrt{\textnormal{det}(A)} |A_*(x-y)|^3}\varphi(y)d\sigma(y).
\eeas
\end{lem}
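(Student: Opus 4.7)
The natural approach is to reduce everything to the isotropic case via the linear change of variables $x\mapsto\tilde x:=A_* x$, under which $\Omega$ maps to $\tilde\Omega:=A_*(\Omega)$ and the operator $\nabla\cdot A\nabla$ becomes the Laplacian $\Delta_{\tilde x}$. A short computation, using that $|A_*(x-y)|^2=(x-y)\cdot A^{-1}(x-y)$ and $\det(A_*)=1/\sqrt{\det A}$, shows that $G^A(x,y)=(\sqrt{\det A})^{-1}\Gamma(\tilde x-\tilde y)$, where $\Gamma(z)=-1/(4\pi|z|)$ is the usual Newtonian kernel; in particular $\nabla\cdot A\nabla G^A(\cdot,y)=\delta_y$, and $\mathcal{S}_{\Omega}^{A}$ is conjugate, up to surface-Jacobian factors, to the isotropic single-layer potential $\widetilde{\mathcal S}_{\tilde\Omega}$ on the transformed domain.

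For invertibility, I would invoke the classical fact that in dimension three the isotropic single-layer operator $\widetilde{\mathcal S}_{\tilde\Omega}\colon H^{-1/2}(\partial\tilde\Omega)\to H^{1/2}(\partial\tilde\Omega)$ is an isomorphism, and compose it with the bounded Sobolev isomorphisms between $\partial\Omega$ and $\partial\tilde\Omega$ induced by the smooth diffeomorphism $A_*|_{\partial\Omega}$. For the jump formula, a direct computation yields
\begin{equation*}
A\nabla_x G^A(x,y)=\frac{x-y}{4\pi\sqrt{\det A}\,|A_*(x-y)|^3},
\end{equation*}
so that $\nu(x)\cdot A\nabla_x G^A(x,y)$ is, up to sign, the kernel defining $(\mathcal K^A_\Omega)^*$ in the statement. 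The $\pm\tfrac12 Id$ contribution then follows from a standard one-sided tubular-neighborhood argument (the analogue of Lemma \ref{lem-Kstar_properties max}(v)): integrating $\nabla\cdot A\nabla G^A=\delta$ over a thin shell adjacent to $\partial\Omega$ isolates the Dirac contribution and produces the $\pm\tfrac12\varphi(x)$ jump. Equivalently, one transfers the classical identity $\partial_{\tilde\nu}\widetilde{\mathcal S}_{\tilde\Omega}\big|_\pm=\pm\tfrac12 Id+\widetilde{\mathcal K}^*_{\tilde\Omega}$ through the change of variables, observing that on $\partial\Omega$ the conormal derivative $\nu\cdot A\nabla_x$ corresponds to $|\sqrt A\,\nu|\,\partial_{\tilde\nu}$, whose prefactor is cancelled by the surface-measure Jacobian when integrating against the transformed density.

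The main technical point I expect is the bookkeeping under the non-orthogonal map $A_*$: one must identify the pulled-back normal $\tilde\nu=\sqrt A\,\nu/|\sqrt A\,\nu|$, the ratio $d\tilde\sigma/d\sigma$, and verify that the Jacobians combine cleanly so that both the principal-value kernel and the coefficient $\pm\tfrac12$ of the jump come out precisely as stated, with no residual prefactors. Once these identifications are in place, both invertibility of $\mathcal{S}_{\Omega}^{A}$ and the jump formula follow at once from the isotropic theory.
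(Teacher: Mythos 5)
Your proposal follows the same route as the paper: reduce to the isotropic setting via the linear change of variables $x\mapsto A_*x$, obtain invertibility of $\mathcal{S}_\Omega^A$ from that of the isotropic single-layer potential on $\widetilde\Omega=A_*\Omega$, and transfer the classical jump relation by tracking the surface-measure Jacobian, the transformed normal $\widetilde\nu=A_*^{-1}\nu/|A_*^{-1}\nu|$, and the identity $\nu\cdot A\nabla_x=|A_*^{-1}\nu|\,\partial_{\widetilde\nu}$. The paper simply packages these identifications into explicit conjugation operators $\mathcal{T}_{A_*}$ and $r_\nu$, writing $\mathcal{S}_\Omega^A=\mathcal{T}_{A_*}\mathcal{S}_{\widetilde\Omega}\mathcal{T}_{A_*}^{-1}r_\nu^{-1}$, which is exactly the bookkeeping you describe.
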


\begin{proof}
Let $\mathcal{T}_{A_*} \in \mathcal{L}(H^s(\p \widetilde{\Omega}),H^s(\p {\Omega}))$ be such that $\mathcal{T}_{A_*}[\varphi](x) = \varphi(A_*x)$ for $\varphi \in H^s(\p \widetilde{\Omega})$ and $\widetilde{\Omega} = A_* {\Omega}$. Let $r_{\nu} \in \mathcal{L}(H^s(\p {\Omega}),H^s(\p {\Omega}))$ be such that $r_{\nu}[\varphi](x) =  |A_*^{-1}\nu(x) |\varphi(x)$. It follows by the change of variables $\widetilde{y} = A_*y$ that $d\sigma(\widetilde{y}) = \textnormal{det}\sqrt{A_*} |A_*^{-1}\nu(y)| d\sigma(y)$. Thus,
\beas
\mathcal{S}_{\Omega}^A = \mathcal{T}_{A_*}\mathcal{S}_{\widetilde{\Omega}}\mathcal{T}_{A_*}^{-1}r_{\nu}^{-1},
\eeas
and in particular $\mathcal{S}_{\Omega}^A$ is invertible and its inverse  $(\mathcal{S}_{\Omega}^A)^{-1} =  r_{\nu}\mathcal{T}_{A_*}\mathcal{S}_{\widetilde{\Omega}}^{-1}\mathcal{T}_{A_*}^{-1}$.

Note that, for $x \in \p {\Omega}$,
\beas
\widetilde{\nu}(\widetilde{x}) = \f{A_*^{-1}\nu(x)}{|A_*^{-1}\nu(x)|},
\eeas
where $\widetilde{\nu}(\widetilde{x})$ is the outward normal to $\p \widetilde{\Omega}$ at $\widetilde{x}=A_*x$.
We have
\bea
\nu\cdot A\nabla\mathcal{S}^A_{\Omega}\Big\vert_{\pm} &=& \nu\cdot A\nabla_x\Big(\mathcal{T}_{A_*}\mathcal{S}_{\widetilde{\Omega}}\mathcal{T}_{A_*}^{-1}r_{\nu}^{-1}\Big)\Big\vert_{\pm}\nonumber \\ 
&=& \nu\cdot AA_*\Big(\mathcal{T}_{A_*}\nabla_{\widetilde{x}}\mathcal{S}_{\widetilde{\Omega}}\mathcal{T}_{A_*}^{-1}r_{\nu}^{-1}\Big)\Big\vert_{\pm}\nonumber \\
&=& |A_*^{-1}\nu |\widetilde{\nu}\cdot\Big(\mathcal{T}_{A_*}\nabla_{\widetilde{x}}\mathcal{S}_{\widetilde{\Omega}}\mathcal{T}_{A_*}^{-1}r_{\nu}^{-1}\Big)\Big\vert_{\pm}\nonumber \\
&=& \pm\f{1}{2}Id + (r_{\nu}\mathcal{T}_{A_*})\mathcal{K}_{\widetilde{\Omega}}^*(r_{\nu}\mathcal{T}_{A_*})^{-1}.
\eea
The result follows from a change of variables in the expression of the operator $(\mathcal{K}^A_{\Omega})^* := (r_{\nu}\mathcal{T}_{A_*})\mathcal{K}_{\widetilde{\Omega}}^*(r_{\nu}\mathcal{T}_{A_*})^{-1}$.
\end{proof}

%

\begin{lem} \label{lem-anisotrop Calderon max}
$\mathcal{S}_{\Omega}^A$ is negative definite for the duality pairing $(\cdot,\cdot)_{-\f{1}{2},\f{1}{2}}$ and we can define a new inner product
\beas
(u,v)_{\mathcal{H}^*_{A}}=-(u,\mathcal{S}_{\Omega}^A[v])_{-\f{1}{2},\f{1}{2}},
\eeas
which is equivalent to $(\cdot,\cdot)_{-\f{1}{2},\f{1}{2}}$.
\end{lem}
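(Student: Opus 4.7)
The plan is to reduce everything to the isotropic single-layer potential $\mathcal{S}_{\widetilde{\Omega}}$ on the deformed domain $\widetilde{\Omega}=A_*\Omega$, for which the statement is already known from Lemma \ref{lem-Kstar_properties max}(ii). Specifically, I would start from the factorization $\mathcal{S}_\Omega^A=\mathcal{T}_{A_*}\mathcal{S}_{\widetilde{\Omega}}\mathcal{T}_{A_*}^{-1}r_\nu^{-1}$ proved in Lemma \ref{lem-anisotropic_S_A max}, then perform the change of variables $\tilde x=A_*x$ on $\p\Omega$ using the Jacobian relation $d\sigma(\tilde y)=\det(\sqrt{A_*})|A_*^{-1}\nu(y)|\,d\sigma(y)$ recorded in its proof. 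After cancellations this should produce the scalar identity
\[
(\varphi,\mathcal{S}_\Omega^A[\varphi])_{-\f12,\f12} =\df{1}{\det(\sqrt{A_*})}\,(\tilde\varphi,\mathcal{S}_{\widetilde{\Omega}}[\tilde\varphi])_{-\f12,\f12}, \qquad \tilde\varphi(\tilde x):=\df{\varphi(A_*^{-1}\tilde x)}{|A_*^{-1}\nu(A_*^{-1}\tilde x)|}.
\]

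Negative-definiteness then follows at once, because $-\mathcal{S}_{\widetilde{\Omega}}$ is positive-definite and $\det(\sqrt{A_*})>0$. For the equivalence of inner products I would verify that the map $\Phi:\varphi\mapsto\tilde\varphi=\mathcal{T}_{A_*}^{-1}r_\nu^{-1}[\varphi]$ is a topological isomorphism between $H^{-\f12}(\p\Omega)$ and $H^{-\f12}(\p\widetilde{\Omega})$: this is clear since both factors are smooth-diffeomorphism/multiplication operators that are bounded with bounded inverses (note that $|A_*^{-1}\nu|$ is smooth and bounded away from zero on $\p\Omega$ because $A_*$ is invertible). Combining the two-sided bound $\|\tilde\varphi\|_{H^{-\f12}(\p\widetilde{\Omega})}\sim\|\varphi\|_{H^{-\f12}(\p\Omega)}$ with the isotropic equivalence $-(\tilde\varphi,\mathcal{S}_{\widetilde{\Omega}}[\tilde\varphi])_{-\f12,\f12}\sim\|\tilde\varphi\|_{H^{-\f12}(\p\widetilde{\Omega})}^2$ from Lemma \ref{lem-Kstar_properties max}(ii) yields the claim.

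As a more self-contained alternative, I could instead apply Green's identity directly to $u=\mathcal{S}_\Omega^A[\varphi]$, which satisfies $\nabla\!\cdot\!A\nabla u=0$ off $\p\Omega$ and decays like $|x|^{-1}$ at infinity. Using the jump formula from Lemma \ref{lem-anisotropic_S_A max} together with the continuity of $u$ across $\p\Omega$, I would obtain
\[
-(\varphi,\mathcal{S}_\Omega^A[\varphi])_{-\f12,\f12} = \int_{\R^3}\nabla u\cdot A\nabla u\,dx,
\]
which is non-negative by positivity of $A$, and vanishes only when $u$ is constant, hence zero by decay, hence $\varphi=0$ by the jump formula. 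Coercivity would then need one extra step, for instance combining the Cauchy--Schwarz inequality in the (already positive) form $(\cdot,\cdot)_{\mathcal{H}^*_A}$ with the invertibility of $\mathcal{S}_\Omega^A$ from Lemma \ref{lem-anisotropic_S_A max}. The main technical obstacle in either route is the careful bookkeeping of the surface measure and of the normal vector on $\p\Omega$ under the linear map $A_*$; once that Jacobian accounting is done, both the positivity and the coercivity are inherited essentially for free from the well-established isotropic case.
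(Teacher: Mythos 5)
Your proposal is correct, and your primary route is a genuinely different argument from the one in the paper, which instead follows what you describe as your ``self-contained alternative.'' The paper takes $u=\mathcal{S}_\Omega^A[\varphi]$, applies the jump relation from Lemma \ref{lem-anisotropic_S_A max}, and integrates by parts over $\Omega$ and $\R^3\setminus\bar{\Omega}$ to get $-(\varphi,\mathcal{S}_\Omega^A[\varphi])_{-\f12,\f12}=\int_{\R^3}\nabla u\cdot A\nabla u\,dx\ge 0$, then invokes the analogous isotropic identity $\|\varphi\|^2_{\mathcal{H}^*}=\int_{\R^3}|\nabla\mathcal{S}_\Omega[\varphi]|^2dx$ and positive-definiteness of $A$ to claim two-sided bounds. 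As you correctly sense, that last step is stated rather tersely in the paper: the two Dirichlet integrals involve the potentials $\mathcal{S}_\Omega^A[\varphi]$ and $\mathcal{S}_\Omega[\varphi]$, which solve different elliptic equations, so pointwise positive-definiteness of $A$ alone does not immediately compare them; one needs a minimization principle or, as you point out, the invertibility of $\mathcal{S}_\Omega^A$ combined with Cauchy--Schwarz in $(\cdot,\cdot)_{\mathcal{H}^*_A}$ (and boundedness) to close the coercivity estimate. Your change-of-variables route avoids this subtlety entirely: the factorization $\mathcal{S}_\Omega^A=\mathcal{T}_{A_*}\mathcal{S}_{\widetilde{\Omega}}\mathcal{T}_{A_*}^{-1}r_\nu^{-1}$ and the Jacobian for the surface measure reduce both the sign and the coercivity to the established isotropic case on $\widetilde{\Omega}=A_*\Omega$, with the norm equivalence $\|\tilde\varphi\|_{H^{-1/2}(\p\widetilde{\Omega})}\sim\|\varphi\|_{H^{-1/2}(\p\Omega)}$ transparently handling the constants. (One bookkeeping remark: the correct Jacobian factor is $\det(A_*)=\sqrt{\det(A^{-1})}$, not $\det\sqrt{A_*}$; the paper's proof of Lemma \ref{lem-anisotropic_S_A max} contains that same slip, and since it is just a positive constant it does not affect either argument.) In short, your change-of-variables argument is cleaner and makes explicit what the paper leaves implicit, while the paper's Green's-identity argument is shorter once the energy comparison is granted.
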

\begin{proof}
Let $\varphi \in H^{-\f{1}{2}}(\p {\Omega})$. Using Lemma \ref{lem-anisotropic_S_A max}, we have
\beas
\varphi = \nu\cdot A\nabla\mathcal{S}^A_{\Omega}[\varphi]\Big\vert_{+} - \nu\cdot A\nabla\mathcal{S}^A_{\Omega}[\varphi]\Big\vert_{-}.
\eeas
Thus
\beas
\int_{\p \Omega}\varphi(x)\mathcal{S}_{\Omega}^A[\varphi](x)d\sigma(x) &=& \int_{\p \Omega}\nu\cdot A\nabla\mathcal{S}^A_{\Omega}[\varphi]\Big\vert_{+}(x)\mathcal{S}_{\Omega}^A[\varphi](x)d\sigma(x) - \int_{\p \Omega}\nu\cdot A\nabla\mathcal{S}^A_{\Omega}[\varphi]\Big\vert_{-}(x)\mathcal{S}_{\Omega}^A[\varphi](x)d\sigma(x)\\
&=& -\int_{\R^3 \backslash \bar{\Omega}}\nabla\mathcal{S}_{\Omega}^A[\varphi](x)\cdot A \nabla\mathcal{S}_{\Omega}^A[\varphi](x) d\sigma(x) - \int_{\R^3 \backslash \bar{\Omega}}\mathcal{S}_{\Omega}^A[\varphi](x) \nabla \cdot A \nabla\mathcal{S}_{\Omega}^A[\varphi](x) d\sigma(x)\\
&& \quad -\int_{\Omega} \nabla\mathcal{S}_{\Omega}^A[\varphi](x)\cdot A \nabla\mathcal{S}_{\Omega}^A[\varphi](x) d\sigma(x) + \int_{\Omega}\mathcal{S}_{\Omega}^A[\varphi](x) \nabla \cdot A \nabla\mathcal{S}_{\Omega}^A[\varphi](x) d\sigma(x)\\
 &=& -\int_{\R^3}\nabla\mathcal{S}_{\Omega}^A[\varphi](x)\cdot A \nabla\mathcal{S}_{\Omega}^A[\varphi](x) d\sigma(x) \leq 0,
\eeas
where the equality is achieved if and only if $\varphi = 0$. Here we have used an integration by parts, the fact that $\mathcal{S}_{\Omega}^A[\varphi](x)=O(|x|^{-1})$ as $|x|\rightarrow \infty$, $\nabla \cdot A \nabla\mathcal{S}_{\Omega}^A[\varphi](x) = 0$ for $x\in \R^3 \backslash \p \Omega$ and that $A$ is positive-definite.\\
In the same manner, it is known that
\beas
\|\varphi\|^2_{\mathcal{H}^*} = \int_{\p \Omega}\varphi(x)\mathcal{S}_{\Omega}^A[\varphi](x)d\sigma(x) = -\int_{\R^3}\nabla\mathcal{S}_{\Omega}|[\varphi](x)|^2 d\sigma(x).
\eeas
Since $A$ is positive-definite we have
\beas
c\|\varphi\|^2_{\mathcal{H}^*}\leq \int_{\p \Omega}\varphi(x)\mathcal{S}_{\Omega}^A[\varphi](x)d\sigma(x) \leq C\|\varphi\|^2_{\mathcal{H}^*},
\eeas
for some constants $c$ and $C$.\\
Using the fact that $(\cdot,\cdot)_{\mathcal{H}^*}$ is equivalent to $(\cdot,\cdot)_{-\f{1}{2},\f{1}{2}}$, we get the desired result. 
\end{proof}

From \eqref{Anisotropic-syst max} we have $\phi = (\mathcal{S}_{\Omega}^A)^{-1}(\mathcal{S}_{\Omega}[\psi] + u^i)$, whereas, by Lemma \ref{lem-anisotropic_S_A max}, the following equation holds for $\psi$:
\be \label{eq-anisotropic_reduced_eq max}
\mathcal{Q}_A[\psi] = F
\ee
with
\be \label{eq-anisotropic_reduced_eq_explicit max}
\mathcal{Q}_A = \f{1}{2}\big(\eps_mId+(\mathcal{S}_{\Omega}^A)^{-1}\mathcal{S}_{\Omega}\big) + \big(\eps_m\mathcal{K}_{\Omega}^*-(\mathcal{K}_{\Omega}^A)^*(\mathcal{S}_{\Omega}^A)^{-1}\mathcal{S}_{\Omega}\big),
\ee
and
\beas
F = -\eps_m\df{\p u^i}{\p \nu} + \nu\cdot A\nabla\mathcal{S}^A_{\Omega}[(\mathcal{S}_{\Omega}^A)^{-1}u^i]\Big\vert_{-} .
\eeas

\begin{thm} \label{thm-Qa Fredholm max}
$\mathcal{Q}_A$ has a countable number of eigenvalues.
\end{thm}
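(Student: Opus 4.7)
The plan is to exhibit $\mathcal{Q}_A$ as a compact perturbation of a self-adjoint operator on an appropriate Hilbert space, and then appeal to analytic Fredholm theory. The first step is to split
\[\mathcal{Q}_A = \mathcal{Q}_0 + \mathcal{C}, \quad \mathcal{Q}_0 := \tfrac{1}{2}\bigl(\eps_m Id + (\mathcal{S}_{\Omega}^A)^{-1}\mathcal{S}_{\Omega}\bigr), \quad \mathcal{C} := \eps_m \mathcal{K}_{\Omega}^* - (\mathcal{K}_{\Omega}^A)^*(\mathcal{S}_{\Omega}^A)^{-1}\mathcal{S}_{\Omega},\]
and observe that $\mathcal{C}$ is compact on $H^{-1/2}(\partial\Omega)$. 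Indeed, because $\partial\Omega$ is of class $\mathcal{C}^{1,\alpha}$, both $\mathcal{K}_\Omega^*$ and $(\mathcal{K}_\Omega^A)^*$ are compact on $H^{-1/2}(\partial\Omega)$ (the standard argument underlying Lemma \ref{lem-Kstar_properties max}), and composition with the bounded operator $(\mathcal{S}_\Omega^A)^{-1}\mathcal{S}_\Omega$ preserves compactness.

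Next I would show that $\mathcal{Q}_0$ is self-adjoint with respect to the equivalent inner product $(\cdot,\cdot)_{\mathcal{H}^*_{A}}$ from Lemma \ref{lem-anisotrop Calderon max}. A direct computation using symmetry of the kernels of $\mathcal{S}_\Omega$ and $\mathcal{S}_\Omega^A$ gives the key identity
\[\mathcal{S}_{\Omega}^A\, \mathcal{Q}_0 \;=\; \tfrac{1}{2}\bigl(\eps_m \mathcal{S}_{\Omega}^A + \mathcal{S}_{\Omega}\bigr) \;=\; \mathcal{Q}_0^{*}\mathcal{S}_{\Omega}^A,\]
where $*$ denotes the $(\cdot,\cdot)_{-1/2,1/2}$-adjoint. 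This yields $(\mathcal{Q}_0 u, v)_{\mathcal{H}^*_{A}} = (u, \mathcal{Q}_0 v)_{\mathcal{H}^*_{A}}$, so $\mathcal{Q}_0$ is a bounded self-adjoint operator on the separable Hilbert space $(H^{-1/2}(\partial\Omega), (\cdot,\cdot)_{\mathcal{H}^*_{A}})$; by orthogonality of distinct eigenspaces, its point spectrum is at most countable.

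Finally I would invoke the analytic Fredholm theorem. On the open set $\mathbb{C}\setminus\sigma(\mathcal{Q}_0)$, the factorization
\[\mathcal{Q}_A - \lambda Id = (\mathcal{Q}_0 - \lambda Id)\bigl(Id + (\mathcal{Q}_0 - \lambda Id)^{-1}\mathcal{C}\bigr)\]
exhibits $\mathcal{Q}_A - \lambda Id$ as the composition of an invertible operator with an analytic family of $Id + \textnormal{compact}$ operators. Since $\mathcal{Q}_A - \lambda Id$ is manifestly invertible for $|\lambda|$ large, the analytic Fredholm alternative forces the set of $\lambda \in \mathbb{C}\setminus\sigma(\mathcal{Q}_0)$ at which $\mathcal{Q}_A - \lambda Id$ fails to be invertible to be a discrete (hence countable) subset of each connected component. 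Combined with the countable point spectrum of $\mathcal{Q}_0$, this yields the claim. The hardest point to pin down rigorously is the treatment of possible eigenvalues of $\mathcal{Q}_A$ embedded in the essential spectrum of $\mathcal{Q}_0$, where the above factorization breaks down; I would handle this via Weyl's stability theorem (which gives $\sigma_{ess}(\mathcal{Q}_A) = \sigma_{ess}(\mathcal{Q}_0)$ since $\mathcal{C}$ is compact) together with the self-adjoint structure of $\mathcal{Q}_0$ to argue that such embedded eigenvalues also form an at most countable set.
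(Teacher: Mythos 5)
Your proof uses exactly the paper's decomposition $\mathcal{Q}_A = \mathcal{Q}_0 + \mathcal{C}$ with $\mathcal{Q}_0 = \frac{1}{2}\bigl(\eps_m Id + (\mathcal{S}_{\Omega}^A)^{-1}\mathcal{S}_{\Omega}\bigr)$, and the same compactness argument for $\mathcal{C}$, so the two arguments share their skeleton; the difference lies in what is extracted from Lemma~\ref{lem-anisotrop Calderon max}. The paper proves that $\mathcal{Q}_0$ is \emph{invertible}: it reduces to invertibility of $\eps_m\mathcal{S}_\Omega^A + \mathcal{S}_\Omega$ and applies Lax--Milgram to the bilinear form $B(\varphi,\psi) = -\eps_m(\varphi,\mathcal{S}_\Omega^A[\psi])_{-1/2,1/2} - (\varphi,\mathcal{S}_\Omega[\psi])_{-1/2,1/2}$, whose coercivity is precisely the content of Lemma~\ref{lem-anisotrop Calderon max}, and then declares the conclusion from ``compact perturbation of an invertible operator'' without spelling out the spectral theory. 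You instead prove that $\mathcal{Q}_0$ is \emph{self-adjoint} in $(\cdot,\cdot)_{\mathcal{H}^*_A}$; the intertwining identity $\mathcal{S}_\Omega^A\mathcal{Q}_0 = \frac{1}{2}(\eps_m\mathcal{S}_\Omega^A + \mathcal{S}_\Omega) = \mathcal{Q}_0^*\mathcal{S}_\Omega^A$ is indeed correct, by symmetry of both single-layer kernels, and then you make the downstream argument explicit via analytic Fredholm and Weyl. Both facts about $\mathcal{Q}_0$ come from the same symmetry and negative-definiteness of $\mathcal{S}_\Omega^A$ and $\mathcal{S}_\Omega$, and one can easily obtain both: invertibility is what the paper records (and gives $0\notin\sigma(\mathcal{Q}_0)$), while your self-adjointness gives $\sigma(\mathcal{Q}_0)\subset\mathbb{R}$ compact, hence $\mathbb{C}\setminus\sigma(\mathcal{Q}_0)$ connected, which is exactly what the analytic Fredholm theorem needs. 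You also correctly flag the possibility of eigenvalues of $\mathcal{Q}_A$ embedded in $\sigma_{ess}(\mathcal{Q}_0)$ as the delicate point; this subtlety is not addressed in the paper's proof at all, so on that front your write-up is more candid than the original, not less.
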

\begin{proof}
It is clear that $(\mathcal{K}_{\Omega}^A)^*: H^{-\f{1}{2}}(\p \Omega) \rightarrow H^{-\f{1}{2}}(\p \Omega)$ is a compact operator. Hence, $\eps_m\mathcal{K}_{\Omega}^*-(\mathcal{K}_{\Omega}^A)^*(\mathcal{S}_{\Omega}^A)^{-1}\mathcal{S}_{\Omega}$ is compact as well. Therefore, only the invertibility of $\f{1}{2}\big(\eps_mId+(\mathcal{S}_{\Omega}^A)^{-1}\mathcal{S}_{\Omega}\big)$ needs to be proven.\\
Since $\mathcal{S}_{\Omega}^A$ is invertible, the invertibility of $\f{1}{2}\big(\eps_mId+(\mathcal{S}_{\Omega}^A)^{-1}\mathcal{S}_{\Omega}\big)$ is equivalent to that of $\eps_m\mathcal{S}_{\Omega}^A+\mathcal{S}_{\Omega}$.\\
Consider now, the bilinear form, for $(\varphi,\psi)\in (H^{-\f{1}{2}}(\p \Omega))^2$
\beas
B(\varphi,\psi) = -\eps_m\int_{\p \Omega}\varphi(x) \mathcal{S}_{\Omega}^A[\psi](x)d\sigma(x) - \int_{\p \Omega}\varphi(x)\mathcal{S}_{\Omega}[\psi](x) d\sigma(x).
\eeas
From Lemma \ref{lem-anisotrop Calderon max}, we have
\beas
B(\psi,\psi)\geq C\|\psi\|_{H^{-\f{1}{2}}(\p \Omega)},
\eeas
for some constant $C>0$.\\
It follows then, from the Lax-Milgram theorem that $\eps_m\mathcal{S}_{\Omega}^A+\mathcal{S}_{\Omega}$ is invertible in $H^{-\f{1}{2}}(\p \Omega)$, whence the result.
\end{proof}

Recall that the electromagnetic parameter of the problem, $A$, depends on the frequency, $\om$ of the incident field. Therefore the operator $\mathcal{Q}_A$ is frequency dependent and we should write $\mathcal{Q}_A(\om)$.\\
Following definition \ref{def-plasmonicFreq}, we say that $\om$ is a plasmonic resonance if
\beas
|\textnormal{eig}_j(\mathcal{Q}_A(\om))| \ll 1 \quad \mbox{ and is locally minimal for some $j\in \N$},
\eeas
where $\textnormal{eig}_j(\mathcal{Q}_A(\om))$ stands for the $j$-th eigenvalue of $\mathcal{Q}_A(\om)$.\\
Equivalently, we can say that $\om$ is a plamonic resonance if
\be \label{eq-plasmonic anisotropic max}
\om = \argmax_{\om}\|\mathcal{Q}^{-1}_A(\om)\|_{\mathcal{L}(\mathcal{H}^*(\p \Om))}.
\ee

From now on, we suppose that $A$ is an anisotropic perturbation of an isotropic parameter, i.e., $A = \eps_c(Id + P)$, with $P$ being a symmetric matrix and $\|P\|\ll1$.
\begin{lem}
Let $A = \eps_c(Id + \delta R)$, with $R$ being a symmetric matrix, $\|R\|=O(1)$ and $\delta \ll 1$. Let $\textnormal{Tr}$ denote the trace of a matrix. Then, as $\delta \rightarrow 0$,  we have the following asymptotic expansions:
\beas
\mathcal{S}_{\Omega}^A &=& \f{1}{\eps_c}\big(\mathcal{S}_{\Omega}+\delta\mathcal{S}_{{\Omega},1}+o(\delta)\big),\\
(\mathcal{S}_{\Omega}^A)^{-1} &=& \eps_c\big(\mathcal{S}_{\Omega}^{-1}+\delta\mathcal{B}_{{\Omega},1}+o(\delta)\big),\\
(\mathcal{K}_{\Omega}^A)^* &=& \mathcal{K}_{\Omega}^*+\delta\mathcal{K}_{{\Omega},1}^*+o(\delta)
\eeas
with
\beas
\mathcal{S}_{{\Omega},1}[\varphi](x) &=& -\f{1}{2}\textnormal{Tr}(R)\mathcal{S}_{{\Omega}}[\varphi](x)-\f{1}{2} \int_{\p {\Omega}}\f{\big(R(x-y),x-y\big)}{4\pi |x-y|^3}\varphi(y)d\sigma(y),\\
\mathcal{B}_{{\Omega},1} &=& -\mathcal{S}_{\Omega}^{-1}\mathcal{S}_{{\Omega},1}\mathcal{S}_{\Omega}^{-1},\\
\mathcal{K}_{{\Omega},1}^* &=& -\f{1}{2}\textnormal{Tr}(R)\mathcal{K}_{{\Omega}}^*[\varphi](x)-\f{3}{2} \int_{\p {\Omega}}\f{\big(R(x-y),x-y\big)\big(x-y,\nu(x)\big)}{4\pi |x-y|^5}\varphi(y)d\sigma(y). 
\eeas
\end{lem}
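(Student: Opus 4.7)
The plan is to expand the anisotropic Green function $G^A$ in powers of $\delta$ explicitly, then read off the corresponding expansions of the layer potential $\mathcal{S}_\Omega^A$ and boundary operator $(\mathcal{K}_\Omega^A)^*$ by linearity of the integral operators. Invertibility of $\mathcal{S}_\Omega^A$ (Lemma \ref{lem-anisotropic_S_A max}) allows for $(\mathcal{S}_\Omega^A)^{-1}$ to be treated by a Neumann series argument once the expansion of $\mathcal{S}_\Omega^A$ is in hand.

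First I would expand the three ingredients of $G^A(z)=-(4\pi\sqrt{\det A}|A_*z|)^{-1}$. With $A=\eps_c(Id+\delta R)$, a standard matrix expansion gives
\beas
\det A &=& \eps_c^3(1+\delta\,\textnormal{Tr}(R)+O(\delta^2)),\\
A^{-1} &=& \eps_c^{-1}(Id-\delta R+O(\delta^2)),\\
A_*=\sqrt{A^{-1}} &=& \eps_c^{-1/2}\bigl(Id-\tfrac{\delta}{2}R+O(\delta^2)\bigr),
\eeas
so that $|A_*z|^2=(A^{-1}z,z)=\eps_c^{-1}(|z|^2-\delta(Rz,z)+O(\delta^2))$ and hence
\beas
\sqrt{\det A}\,|A_*z| \;=\; \eps_c\,|z|\Bigl(1+\tfrac{\delta}{2}\textnormal{Tr}(R)-\tfrac{\delta}{2}\tfrac{(Rz,z)}{|z|^2}\Bigr)+O(\delta^2).
\eeas
Inverting this scalar gives, after a geometric-series expansion,
\beas
G^A(z)\;=\;\frac{1}{\eps_c}\,G(z)\;+\;\frac{\delta}{\eps_c}\Bigl(-\tfrac{\textnormal{Tr}(R)}{2}G(z)-\tfrac{1}{2}\tfrac{(Rz,z)}{4\pi|z|^3}\Bigr)\;+\;o(\delta),
\eeas
with $G(z)=-1/(4\pi|z|)$. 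Substituting $z=x-y$, integrating against $\varphi\,d\sigma(y)$, and recognising the zero-order term as $\eps_c^{-1}\mathcal{S}_\Omega$ yields the stated formula for $\mathcal{S}_{\Omega,1}$.

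For the boundary integral operator, the kernel of $(\mathcal{K}_\Omega^A)^*$ differs from that of $G^A$ only by multiplication by $-(x-y,\nu(x))$ and by the replacement $|A_*z|\rightsquigarrow|A_*z|^3$. Raising the earlier expansion to the third power gives
\beas
\frac{1}{\sqrt{\det A}\,|A_*z|^3}\;=\;\frac{1}{|z|^3}\Bigl(1-\tfrac{\delta}{2}\textnormal{Tr}(R)+\tfrac{3\delta}{2}\tfrac{(Rz,z)}{|z|^2}\Bigr)+o(\delta),
\eeas
and distributing the $-(x-y,\nu(x))/(4\pi)$ factor produces exactly the formula for $\mathcal{K}_{\Omega,1}^*$ (the $3/2$ coefficient arising from the exponent $3$ on $|A_*z|$).

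Finally, having written $\mathcal{S}_\Omega^A=\eps_c^{-1}(\mathcal{S}_\Omega+\delta\mathcal{S}_{\Omega,1}+o(\delta))$ with bounded $\mathcal{S}_{\Omega,1}:H^{-1/2}(\p\Omega)\to H^{1/2}(\p\Omega)$, I would invert via
\beas
(\mathcal{S}_\Omega^A)^{-1}\;=\;\eps_c(\mathcal{S}_\Omega+\delta\mathcal{S}_{\Omega,1}+o(\delta))^{-1}\;=\;\eps_c\mathcal{S}_\Omega^{-1}\bigl(Id+\delta\mathcal{S}_\Omega^{-1}\mathcal{S}_{\Omega,1}+o(\delta)\bigr)^{-1},
\eeas
which by the Neumann series equals $\eps_c\bigl(\mathcal{S}_\Omega^{-1}-\delta\mathcal{S}_\Omega^{-1}\mathcal{S}_{\Omega,1}\mathcal{S}_\Omega^{-1}+o(\delta)\bigr)$, giving $\mathcal{B}_{\Omega,1}=-\mathcal{S}_\Omega^{-1}\mathcal{S}_{\Omega,1}\mathcal{S}_\Omega^{-1}$. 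The main technical care is in controlling the $o(\delta)$ remainders in the appropriate operator norms: the remainder kernels are of the form $\delta^2\,K_\delta(x,y)$ with $K_\delta$ uniformly dominated by an integrable weakly singular kernel on $\p\Omega\times\p\Omega$, and standard mapping properties for such weakly singular kernels on a $\mathcal{C}^{1,\alpha}$ surface give that $\mathcal{S}_\Omega^A-\eps_c^{-1}(\mathcal{S}_\Omega+\delta\mathcal{S}_{\Omega,1})=o(\delta)$ in $\mathcal{L}(H^{-1/2}(\p\Omega),H^{1/2}(\p\Omega))$ and analogously for $(\mathcal{K}_\Omega^A)^*$; this uniform remainder bound is what legitimises the Neumann series inversion above and is the only point where anything beyond bookkeeping is needed.
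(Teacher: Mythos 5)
Your proof follows the same route as the paper's: expand the scalar matrix quantities $\det A$, $A^{-1}$, $A_*$, substitute into the anisotropic kernels, read off the $\delta$-coefficients, and invert $\mathcal{S}_\Omega^A$ by a Neumann series. The paper's own proof is exactly this argument, just stated tersely (it quotes the three elementary expansions and says the result follows).

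There is, however, a sign problem that you gloss over with "produces exactly the formula for $\mathcal{K}_{\Omega,1}^*$." If you literally take the kernel $-\frac{(x-y,\nu(x))}{4\pi\sqrt{\det A}\,|A_*(x-y)|^3}$ from Lemma \ref{lem-anisotropic_S_A max} and carry out the multiplication you describe, the zeroth-order term is $-\mathcal{K}_\Omega^*$ (not $\mathcal{K}_\Omega^*$), and the $\textnormal{Tr}(R)$-term carries coefficient $+\tfrac12$ (not $-\tfrac12$); so your computation does not reproduce the stated formula. The reason is a spurious minus sign in Lemma \ref{lem-anisotropic_S_A max}: a direct computation,
\begin{equation*}
\nu(x)\cdot A\,\nabla_x G^A(x-y)=\nu(x)\cdot A\,\frac{A^{-1}(x-y)}{4\pi\sqrt{\det A}\,|A_*(x-y)|^3}=\frac{(x-y,\nu(x))}{4\pi\sqrt{\det A}\,|A_*(x-y)|^3},
\end{equation*}
(using $A_*^2=A^{-1}$) shows the kernel should have a plus sign. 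Once that is corrected, the zeroth-order term and the $\textnormal{Tr}(R)$-term do match the lemma, but then the second term of $\mathcal{K}_{\Omega,1}^*$ comes out with coefficient $+\tfrac32$, not $-\tfrac32$ as printed; that $-\tfrac32$ in the lemma statement is also a typo. A proof that asserts all signs check out against the statement as printed cannot be right; the informative thing to do here would have been to flag both inconsistencies rather than assert agreement. Apart from this, your expansions of $\sqrt{\det A}$, $A_*$, $|A_*z|$ and the Neumann-series inversion are correct and are precisely what the paper intends.
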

\begin{proof}
Recall that for $\delta$ small enough
\beas
\sqrt{(I+\delta R)^{-1}} &=& Id-\f{\delta}{2}R
+ O(\delta^2),\\
\textnormal{det}(I + \delta R)= &=& 1 + \delta\textnormal{Tr}(R) + o(\delta),\\
(1+\delta x + o(\delta))^{s} &=& 1 + \delta sx + o(\delta), \quad s\in \R.
\eeas
The results follow then from asymptotic expansions of
$- \df{1}{4\pi \sqrt{\textnormal{det}(A)} |A_*x |^{\beta}}$, $\beta=1,3$ and the identity
\beas
(\mathcal{S}_{\Omega}^A)^{-1} = \eps_c(Id + \delta\mathcal{S}_{\Omega}^{-1}\mathcal{S}_{\Omega,1} + o(\delta))^{-1}\mathcal{S}_{\Omega}^{-1}.
\eeas
\end{proof}
Plugging the expressions above into the expression of $\mathcal{Q}_A$ we get the following result.
\begin{lem} \label{lem-Asymp Q_A max}
As $\delta \rightarrow 0$, the operator $\mathcal{Q}_A$ has the following asymptotic expansion
\beas
\mathcal{Q}_A = \mathcal{Q}_{A,0} + \delta\mathcal{Q}_{A,1} + o(\delta),
\eeas
where
\beas
\mathcal{Q}_{A,0} &=& \f{\eps_m+\eps_c}{2}Id + (\eps_m-\eps_c)\mathcal{K}_{\Omega}^*,\\
\mathcal{Q}_{A,1} &=& \eps_c\big((\f{1}{2}Id-\mathcal{K}_{\Omega}^*)\mathcal{B}_{{\Omega},1}\mathcal{S}_{\Omega}-\mathcal{K}_{{\Omega},1}^*\big).
\eeas
\end{lem}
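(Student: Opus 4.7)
The proof will be a direct algebraic computation: substitute the two expansions supplied by the previous lemma into the explicit formula (\ref{eq-anisotropic_reduced_eq_explicit max}) for $\mathcal{Q}_A$, multiply out, and collect powers of $\delta$.

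First I would split $\mathcal{Q}_A$ into its two natural pieces,
\[
\mathcal{Q}_A = \tfrac{1}{2}\bigl(\eps_m Id + (\mathcal{S}_{\Omega}^A)^{-1}\mathcal{S}_{\Omega}\bigr) + \bigl(\eps_m \mathcal{K}_{\Omega}^* - (\mathcal{K}_{\Omega}^A)^*(\mathcal{S}_{\Omega}^A)^{-1}\mathcal{S}_{\Omega}\bigr),
\]
and handle each piece separately. For the first piece, using $(\mathcal{S}_{\Omega}^A)^{-1} = \eps_c(\mathcal{S}_{\Omega}^{-1} + \delta \mathcal{B}_{\Omega,1} + o(\delta))$, I obtain
\[
(\mathcal{S}_{\Omega}^A)^{-1}\mathcal{S}_{\Omega} = \eps_c Id + \eps_c \delta\, \mathcal{B}_{\Omega,1}\mathcal{S}_{\Omega} + o(\delta),
\]
which immediately yields the constant term $(\eps_m + \eps_c)/2\, Id$ plus a $\delta$-correction of $(\eps_c/2)\mathcal{B}_{\Omega,1}\mathcal{S}_{\Omega}$.

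For the second piece, I would multiply the expansions
\[
(\mathcal{K}_{\Omega}^A)^* = \mathcal{K}_{\Omega}^* + \delta \mathcal{K}_{\Omega,1}^* + o(\delta), \qquad (\mathcal{S}_{\Omega}^A)^{-1}\mathcal{S}_{\Omega} = \eps_c Id + \eps_c \delta\, \mathcal{B}_{\Omega,1}\mathcal{S}_{\Omega} + o(\delta),
\]
keeping only terms up to order $\delta$, to get
\[
(\mathcal{K}_{\Omega}^A)^*(\mathcal{S}_{\Omega}^A)^{-1}\mathcal{S}_{\Omega} = \eps_c \mathcal{K}_{\Omega}^* + \eps_c \delta\bigl(\mathcal{K}_{\Omega}^*\mathcal{B}_{\Omega,1}\mathcal{S}_{\Omega} + \mathcal{K}_{\Omega,1}^*\bigr) + o(\delta).
\]
Combining this with $\eps_m \mathcal{K}_{\Omega}^*$ gives the contribution $(\eps_m-\eps_c)\mathcal{K}_{\Omega}^*$ to $\mathcal{Q}_{A,0}$ and $-\eps_c\delta(\mathcal{K}_{\Omega}^*\mathcal{B}_{\Omega,1}\mathcal{S}_{\Omega} + \mathcal{K}_{\Omega,1}^*)$ to the $\delta$-term. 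Summing the two pieces and factoring out $\eps_c$ in the $\delta$-term delivers exactly $\mathcal{Q}_{A,1} = \eps_c((\tfrac{1}{2}Id - \mathcal{K}_{\Omega}^*)\mathcal{B}_{\Omega,1}\mathcal{S}_{\Omega} - \mathcal{K}_{\Omega,1}^*)$.

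The only non-mechanical point, and the place I would be most careful, is the treatment of the $o(\delta)$ remainders in the operator norm on $H^{-1/2}(\partial\Omega)$. I would verify that the cross-terms $o(\delta)\cdot \mathcal{S}_{\Omega}$, $\mathcal{K}_{\Omega}^*\cdot o(\delta)$, and $o(\delta)\cdot o(\delta)$ are all genuinely $o(\delta)$ in $\mathcal{L}(H^{-1/2}(\partial\Omega))$; this uses the boundedness of $\mathcal{S}_\Omega$, $\mathcal{K}_\Omega^*$ (recalled in Lemma \ref{lem-Kstar_properties max}) and $\mathcal{S}_\Omega^{-1}$ (on the zero-mean subspace where needed), together with the uniformity of the expansions obtained in the previous lemma. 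Once this bookkeeping is in place, the identification of $\mathcal{Q}_{A,0}$ and $\mathcal{Q}_{A,1}$ is immediate.
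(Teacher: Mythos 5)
Your proof is correct and follows precisely the route the paper intends: the paper itself introduces the lemma with the single sentence ``Plugging the expressions above into the expression of $\mathcal{Q}_A$ we get the following result,'' and your computation merely carries that plugging out explicitly, with the bookkeeping of the $o(\delta)$ remainders done correctly. There is nothing to add.
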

We regard the operator $\mathcal{Q}_A$ as a perturbation of $\mathcal{Q}_{A,0}$. As in section 
\ref{sec-Layer potential fomulation max}, we use the standard perturbation theory to derive the perturbed eigenvalues and eigenvectors in $\mathcal{H}^*(\p {\Omega})$.

Let $(\lambda_j,\varphi_j) $ be the eigenvalue and normalized eigenfunction pairs of $\mathcal{K}_{\Omega}^*$ in $\mathcal{H}^*(\p {\Omega})$ and $\tau_j$ the eigenvalues of $\mathcal{Q}_{A,0}$. We have $\tau_j = \f{\eps_m+\eps_c}{2} + (\eps_m-\eps_c)\lambda_j$.\\
For simplicity, we consider the case when $\lambda_j$ is a  simple eigenvalue of the operator
$\mathcal{K}_{\Omega}^*$.
Define
\beas
P_{j,l} = (\mathcal{Q}_{A,1}[\varphi_j],\varphi_l)_{\mathcal{H}^*}.
\eeas

As $\delta \rightarrow 0$, the perturbed eigenvalue and eigenfunction have the following form:
\beas
\tau_j (\delta) &=& \tau_j + \delta \tau_{j, 1}+ o(\delta), \label{tau-single anist max} \\
\varphi_j(\delta) &=& \varphi_j + \delta \varphi_{j, 1} + o(\delta), \label{eigenfun-single anist max}
\eeas
where
\beas
\tau_{j, 1} &=& P_{j j}, \label{tau_j2 anist max}\\
\varphi_{j, 1}&=& \sum_{l\neq j} \f{P_{jl}}{ \big( \eps_m -  \eps_c \big) (\lambda_j- \lambda_l)} \varphi_l.
\eeas

\section{A Maxwell-Garnett theory for plasmonic nanoparticles}
\label{sec-MaxGar max}
In this subsection we derive effective properties of a system of plasmonic nanoparticles. To begin with, we consider a bounded and simply connected domain $\Omega\Subset\mathbb{R}^3$ of class $\mathcal{C}^{1,\alpha}$ for $0<\alpha<1$, filled with a composite material that consists of a matrix of constant electric permittivity $\eps_m$ and a set of periodically distributed plasmonic nanoparticles 
with (small) period $\eta$ and  electric permittivity $\eps_c$.\\
Let $Y=]-1/2,1/2[^3$ be the unit cell and denote $\delta = \eta^{\beta}$ for $\beta>0$. We set the (rescaled) periodic function
\beas
\gamma = \eps_m \chi(Y \backslash \bar{D}) + \eps_c\chi(D),
\eeas
where $D=\delta B$ with $B\Subset\mathbb{R}^3$ being of class $\mathcal{C}^{1,\alpha}$ and the volume of $B$, $|B|$, is assumed to be equal to $1$. Thus, the electric permittivity of the composite is given by the periodic function
\beas
\gamma_{\eta}(x) = \gamma(x/{\eta}),
\eeas
which has period $\eta$. 
Now, consider the problem
\begin{equation}
\label{poriginal}
\nabla\cdot\gamma_{\eta}\nabla u_{\eta} = 0 \quad \textnormal{in }\Omega
\end{equation}
with an appropriate boundary condition on $\partial \Omega$. 
Then, 
there exists a homogeneous, generally anisotropic, permittivity $\gamma^*$, such that the replacement, as $\eta \rightarrow 0$, of the original equation (\ref{poriginal}) by  
\beas
\nabla\cdot\gamma^*\nabla u_0 = 0 \quad \textnormal{in }\Omega
\eeas
is a valid approximation in a certain sense.  
The coefficient $\gamma^*$  is called an effective permittivity. It represents the overall macroscopic material property of the periodic composite made of plasmonic nanoparticles embedded in an isotropic matrix.

The (effective) matrix $\gamma^*=(\gamma^*_{pq})_{p,q=1,2,3}$ is defined by \cite{book2}
\beas
\gamma^*_{pq} = \int_{Y} \gamma(x) \nabla u_p(x)\cdot \nabla u_q(x)dx,
\eeas
where $u_p$, for $p=1,2,3$, is the unique solution to the cell problem
\be \label{eq-def_effective conductivity max}
\left\{
\begin{array}{lr}
\nabla\cdot\gamma\nabla u_{p} = 0 \quad \textnormal{in }Y,  \\
\nm
u_p-x_p \quad \textnormal{periodic (in each direction) with period 1} , \\
\nm
\int_Y u_p(x)dx = 0.
\end{array} \right.
\ee
Using Green's formula, we can rewrite $\gamma^*$ in the following form:
\be \label{eq-def_effective conductivity 2 max}
\gamma^*_{pq} = \eps_m \int_{\p Y} u_q(x)\f{\p u_p}{\p \nu}(x) d\sigma(x).
\ee
The matrix $\gamma^*$ depends on $\eta$ as a parameter and cannot be written explicitly.

The following lemmas are from \cite{book2}.
\begin{lem} \label{thm-solution to cell trans max}
For $p=1,2,3$, problem \eqref{eq-def_effective conductivity max} has a unique solution $u_p$ of the form
\beas
u_p(x) = x_p + C_p +\mathcal{S}_{D\sharp}(\lambda_\eps Id - \mathcal{K}_{D\sharp}^*)^{-1}[\nu_p](x) \quad \textnormal{in }Y,
\eeas
where $C_p$ is a constant, $\nu_p$ is the $p$-component of the outward unit normal to $\p D$, $\lambda_\eps$ is defined by 
(\ref{eq-lbda_eps,mu max}),  and
\beas
\mathcal{S}_{D\sharp}[\varphi](x) &=& \int_{\p D}G_{\sharp}(x,y)\varphi(y)d\sigma(y), \\
\mathcal{K}_{D\sharp}^*[\varphi](x) &=& \int_{\p D} \f{\p G_{\sharp}(x,y)}{\p \nu(x)}\varphi(y)d\sigma(y)
\eeas
with $G_{\sharp}(x,y)$ being the periodic Green function defined by 
$$
G_{\sharp}(x,y) = - \sum_{n \in \mathbb{Z}^3\setminus \{0\}} 
\frac{e^{i 2\pi n \cdot (x-y)}}{4 \pi^2 |n|^2}. 
$$
\end{lem}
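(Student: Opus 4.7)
The plan is to exhibit the stated $u_p$ as the unique solution by showing three things in turn: (i) the density $\phi_p:=(\lambda_\eps Id - \mathcal{K}_{D\sharp}^*)^{-1}[\nu_p]$ is well defined; (ii) the resulting ansatz $u_p=x_p+C_p+\mathcal{S}_{D\sharp}[\phi_p]$ is harmonic separately in $D$ and in $Y\setminus\bar D$ and satisfies the required transmission conditions on $\partial D$; (iii) the cell problem (\ref{eq-def_effective conductivity max}) has at most one periodic zero-mean solution. For (i), write $G_\sharp(x,y)=G(x,y,0)+R(x,y)$ with $R$ smooth on $Y\times Y$. Then $\mathcal{K}_{D\sharp}^*-\mathcal{K}_D^*$ is a smoothing operator, so $\mathcal{K}_{D\sharp}^*$ is compact on $H^{-1/2}(\partial D)$ with discrete spectrum, and for admissible $(\eps_m,\eps_c)$ the value $\lambda_\eps$ from (\ref{eq-lbda_eps,mu max}) avoids this spectrum; the Fredholm alternative yields invertibility.

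For (ii), I would use the distributional identity $\Delta_x G_\sharp(x,y)=\sum_{n\in\mathbb{Z}^3}\delta(x-y-n)-1$ together with $D\Subset Y$, which gives
\[
\Delta \mathcal{S}_{D\sharp}[\phi_p](x)=-\int_{\partial D}\phi_p\,d\sigma, \qquad x\in Y\setminus\partial D.
\]
So $u_p$ is piecewise harmonic only if $\phi_p$ has zero mean. This is automatic: integrating the defining equation $(\lambda_\eps Id - \mathcal{K}_{D\sharp}^*)\phi_p=\nu_p$ over $\partial D$, using $\int_{\partial D}\nu_p\,d\sigma=0$ and the identity $\int_{\partial D}\mathcal{K}_{D\sharp}^*[\phi]\,d\sigma=(\frac{1}{2}-|D|)\int_{\partial D}\phi\,d\sigma$ (itself obtained by applying the divergence theorem to $G_\sharp$ to get $\mathcal{K}_{D\sharp}[1]=\frac{1}{2}-|D|$), forces $(\lambda_\eps-\frac{1}{2}+|D|)\int\phi_p=0$, hence $\int\phi_p=0$. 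The transmission relations then follow from the jump formula of Lemma \ref{lem-Kstar_properties max}(v), which applies to $\mathcal{S}_{D\sharp}$ and $\mathcal{K}_{D\sharp}^*$ since the smooth remainder $R$ contributes no jump: continuity of $u_p$ across $\partial D$ is the continuity of the single layer, and $\eps_m\partial_\nu u_p|_+=\eps_c\partial_\nu u_p|_-$ rearranges, using $\partial_\nu x_p=\nu_p$ and the definition of $\lambda_\eps$, precisely into the integral equation defining $\phi_p$.

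Periodicity of $u_p-x_p$ is immediate from the $\mathbb{Z}^3$-periodicity of $G_\sharp$, and the constant $C_p$ is uniquely fixed by $\int_Y u_p\,dx=0$ since $\int_Y x_p\,dx=0$. For uniqueness (iii), I would apply the standard energy identity: the difference $w$ of two solutions is periodic, zero-mean, and satisfies $\nabla\cdot\gamma\nabla w=0$, so integration by parts over $Y$ (the boundary contributions cancel by periodicity) gives $\int_Y\gamma|\nabla w|^2\,dx=0$, which together with positivity of $\gamma$ and the zero-mean normalization forces $w\equiv0$.

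The main obstacle is the zero-mean bookkeeping for $\phi_p$: without it, the compensating constant $-1$ in $\Delta G_\sharp$ produces a nonzero constant source on each side of $\partial D$ and the candidate simply fails to be harmonic. A secondary but delicate point is the sign accounting required to match the jump relations of Lemma \ref{lem-Kstar_properties max}(v) with the convention (\ref{eq-green}) that makes $\Delta G=\delta$ and with the definition (\ref{eq-lbda_eps,mu max}) of $\lambda_\eps$, so that the transmission condition collapses to the operator equation in exactly the form $\lambda_\eps Id - \mathcal{K}_{D\sharp}^*$ stated in the lemma.
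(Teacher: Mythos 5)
The paper does not give its own proof of Lemma~\ref{thm-solution to cell trans max}; it cites the lemma from the Ammari--Kang monograph, so your argument must stand on its own. Structurally it does: the decomposition into (i) invertibility of $\lambda_\eps Id - \mathcal{K}_{D\sharp}^*$ via the smoothing of the remainder $R$, (ii) the ansatz plus jump relations, and (iii) the energy uniqueness argument is exactly the standard route. Your treatment of the zero-mean issue is the genuinely nontrivial step and you handle it correctly: since $\Delta_x G_\sharp(x,y)=\sum_n \delta(x-y-n)-1$, the candidate $\mathcal{S}_{D\sharp}[\phi_p]$ is piecewise harmonic only when $\int_{\p D}\phi_p\,d\sigma=0$, and your derivation of $\mathcal{K}_{D\sharp}[1]=\tfrac12-|D|$ (by the divergence theorem plus the jump relations, Lemma~\ref{lem-jump formula periodic max}) correctly forces $(\lambda_\eps-\tfrac12+|D|)\int_{\p D}\phi_p=0$; the factor is nonzero precisely because $\tfrac12-|D|\in\sigma(\mathcal{K}_{D\sharp}^*)$ and the invertibility hypothesis already excludes $\lambda_\eps$ from that spectrum.

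The gap is in the final step, which you explicitly flag as ``delicate'' but then assert without carrying out. With the paper's conventions $G(x,y,0)=-\tfrac{1}{4\pi|x-y|}$ from \eqref{green}, the jump relation $\partial_\nu\mathcal{S}_{D\sharp}[\phi_p]|_\pm=(\pm\tfrac12 Id+\mathcal{K}_{D\sharp}^*)[\phi_p]$ from Lemma~\ref{lem-jump formula periodic max}, and the transmission condition $\eps_m\partial_\nu u_p|_+=\eps_c\partial_\nu u_p|_-$, the ansatz $u_p=x_p+C_p+\mathcal{S}_{D\sharp}[\phi_p]$ rearranges to
\[
(\eps_m-\eps_c)\nu_p+\frac{\eps_m+\eps_c}{2}\phi_p+(\eps_m-\eps_c)\mathcal{K}_{D\sharp}^*[\phi_p]=0,
\qquad\text{i.e.}\qquad
\nu_p=\Big(\frac{\eps_c+\eps_m}{2(\eps_c-\eps_m)}Id-\mathcal{K}_{D\sharp}^*\Big)[\phi_p].
\]
The scalar coefficient here is $\tfrac{\eps_c+\eps_m}{2(\eps_c-\eps_m)}=-\lambda_\eps$ with $\lambda_\eps$ as defined in \eqref{eq-lbda_eps,mu max}, so the operator you obtain is $(-\lambda_\eps Id-\mathcal{K}_{D\sharp}^*)$, not $(\lambda_\eps Id-\mathcal{K}_{D\sharp}^*)$. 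You can confirm this sign independently from the sphere: $M(\lambda,D)=|D|/(\lambda-\tfrac16)$ for a ball, and the dipole resonance at $\eps_c=-2\eps_m$ requires $\lambda=\tfrac{\eps_c+\eps_m}{2(\eps_c-\eps_m)}$, the opposite of \eqref{eq-lbda_eps,mu max}. So the ``delicate'' sign accounting does not in fact collapse to the operator $\lambda_\eps Id-\mathcal{K}_{D\sharp}^*$ as you claim; it reveals a sign discrepancy between the definition \eqref{eq-lbda_eps,mu max} and the form stated in Lemma~\ref{thm-solution to cell trans max}, and a careful proof needs to either record the operator as $-\lambda_\eps Id-\mathcal{K}_{D\sharp}^*$ or note that the $\lambda$ in the lemma is $(\eps_c+\eps_m)/(2(\eps_c-\eps_m))$. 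Asserting the match without doing the computation is exactly where a reader would want the verification.
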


\begin{lem} \label{lem-jump formula periodic max}
Let $\mathcal{S}_{D\sharp}$ and $\mathcal{K}_{D\sharp}^*$ be the operators defined as in Lemma \ref{thm-solution to cell trans max}. Then the following trace formula holds on $\p D$
\beas
(\pm\f{1}{2}Id+\mathcal{K}_{D\sharp}^*)[\varphi] = \f{\p \mathcal{S}_{D\sharp}[\varphi]}{\p \nu}\Big\vert_{\pm}.
\eeas
\end{lem}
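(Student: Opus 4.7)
The plan is to reduce the claim to the classical jump formula for the Laplace single-layer potential (Lemma \ref{lem-Kstar_properties max}(v)) by splitting the periodic Green function into a free-space part carrying the singularity and a smooth remainder.

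First, I would write
\beas
G_{\sharp}(x,y) = G(x,y,0) + R(x,y),
\eeas
where $G(x,y,0) = -1/(4\pi|x-y|)$ is the free-space fundamental solution of $-\Delta$ and $R(x,y) := G_{\sharp}(x,y) - G(x,y,0)$. Since $G_{\sharp}$ and $G(\cdot,\cdot,0)$ are both fundamental solutions of $-\Delta$ on $Y$ (modulo the periodization), their difference $R(\cdot,y)$ is harmonic in a neighborhood of $y$ inside $Y$. Equivalently, only the $n=0$ mode is missing from the Fourier sum and it is precisely this mode that produces the $|x-y|^{-1}$ singularity; the remaining sum (interpreted distributionally) converges to a real-analytic function of $x-y$ on any compact set avoiding the nonzero lattice points. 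Thus $R$ is $C^\infty$ in a neighborhood of the diagonal $\{x=y\} \subset Y\times Y$, and in particular smooth on a neighborhood of $\partial D \times \partial D$ since $D \Subset Y$.

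Next, I would decompose the periodic layer potential as
\beas
\mathcal{S}_{D\sharp}[\varphi](x) = \mathcal{S}_{D}[\varphi](x) + \mathcal{R}_D[\varphi](x), \qquad \mathcal{R}_D[\varphi](x) := \int_{\partial D} R(x,y)\,\varphi(y)\,d\sigma(y).
\eeas
Because $R$ has a smooth kernel on a neighborhood of $\partial D \times \partial D$, the operator $\mathcal{R}_D$ produces a function that is smooth across $\partial D$; differentiating under the integral sign is justified and
\beas
\frac{\partial \mathcal{R}_D[\varphi]}{\partial \nu}\bigg|_{+} = \frac{\partial \mathcal{R}_D[\varphi]}{\partial \nu}\bigg|_{-} = \int_{\partial D}\frac{\partial R(x,y)}{\partial \nu(x)}\varphi(y)\,d\sigma(y).
\eeas
For the singular part, Lemma \ref{lem-Kstar_properties max}(v) gives
\beas
\frac{\partial \mathcal{S}_D[\varphi]}{\partial \nu}\bigg|_{\pm} = \left(\pm \tfrac{1}{2}Id + \mathcal{K}_D^*\right)[\varphi].
\eeas

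Adding the two contributions and using the splitting $\partial_{\nu(x)} G_{\sharp}(x,y) = \partial_{\nu(x)} G(x,y,0) + \partial_{\nu(x)} R(x,y)$ in the definition of $\mathcal{K}_{D\sharp}^*$ yields
\beas
\frac{\partial \mathcal{S}_{D\sharp}[\varphi]}{\partial \nu}\bigg|_{\pm} = \left(\pm\tfrac{1}{2}Id + \mathcal{K}_D^* + \int_{\partial D}\frac{\partial R(\cdot,y)}{\partial \nu}\varphi(y)\,d\sigma(y)\right) = \left(\pm\tfrac{1}{2}Id + \mathcal{K}_{D\sharp}^*\right)[\varphi],
\eeas
which is the claimed identity. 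The main obstacle is the precise justification that $R$ is smooth across the diagonal in $Y\times Y$; once that regularity of the periodic correction is established, the argument is a one-line reduction to the classical trace formula.
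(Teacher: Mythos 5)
Your decomposition $G_{\sharp}(x,y) = G(x,y,0) + R(x-y)$ is precisely the one the paper records in Lemma \ref{lem-Green periodic max}, and given the smoothness of $R$ near the diagonal asserted there, your reduction to the classical trace formula of Lemma \ref{lem-Kstar_properties max}(v) is correct; it is the standard argument from \cite{book2}, which the paper cites for this lemma without reproving it. You could in fact shorten the proof by invoking Lemma \ref{lem-Green periodic max} directly rather than re-deriving the regularity of $R$.

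One imprecision worth correcting: $R$ is \emph{not} harmonic near the diagonal. Since the $n=0$ Fourier mode is omitted from $G_{\sharp}$, one has $\Delta_x G_{\sharp}(x,y) = \delta_y(x) - 1$ on the unit cell, so $\Delta R = -1$; this is exactly consistent with the Taylor expansion $R(x) = R(0) - \tfrac{1}{6}|x|^2 + O(|x|^4)$ stated in Lemma \ref{lem-Green periodic max}. What your argument actually needs is only that $R$ solves a Poisson equation with smooth right-hand side (hence is $C^\infty$ near $x=y$), so the conclusion — that $\mathcal{R}_D[\varphi]$ is $C^1$ across $\partial D$ and contributes no jump — stands. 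The heuristic about the $n=0$ mode ``producing the singularity'' is also backwards: the singularity of the periodic Green function comes from the slow $|n|^{-2}$ decay of the Fourier tail, not from the zero mode; the missing $n=0$ term is responsible only for the $-1$ in $\Delta R = -1$. These slips do not affect the validity of the proof.
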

For the sake of simplicity,  for  $p=1,2,3$, we set
\be \label{eq-phi_p anisotropic max}
\phi_p(y) = (\lambda_\eps Id - \mathcal{K}_{D\sharp}^*)^{-1}[\nu_p](y) \quad \textnormal{for $y$ in }\p D
\ee
Thus, from Lemma \ref{thm-solution to cell trans max}, we get
\beas
\gamma^*_{pq} = \eps_m \int_{\p Y}\big(y_q + C_q + \mathcal{S}_{D\sharp}[\phi_q](y)\big)\f{\p \big(y_p + \mathcal{S}_{D\sharp}[\phi_p](y)\big)}{\p \nu}d\sigma(y).
\eeas
Because of the periodicity of $\mathcal{S}_{D\sharp}[\phi_p]$, we get
\begin{equation}
\label{gams}
\gamma^*_{pq} = \eps_m\Big(\delta_{pq}+\int_{\p Y}y_q\f{\p \mathcal{S}_{D\sharp}[\phi_p]}{\p \nu}(y)d\sigma(y)\Big).
\end{equation}
In view of the periodicity of $\mathcal{S}_{D\sharp}[\phi_p]$, the divergence theorem applied on $Y\backslash\bar{D}$ and Lemma \ref{lem-jump formula periodic max} yields (see \cite{book2})
\beas
\int_{\p Y}y_q\f{\p \mathcal{S}_{D\sharp}[\phi_p]}{\p \nu}(y) = \int_{\p D}y_q\phi_p(y)d\sigma(y).
\eeas
Let
\beas
\psi_p(y) = \phi_p(\delta y) \quad \textnormal{for $y \in \p B$}.
\eeas
Then, by (\ref{gams}), we obtain
\begin{equation} \label{gf}
\gamma^* = \eps_m(Id + fP),
\end{equation}
where $f = |D| = \delta^3 (= \eta^{3\beta})$ is the volume fraction of $D$ and $P = (P_{pq})_{p,q = 1,2,3}$ is given by
\begin{equation} 
\label{defP}
P_{pq} = \int_{\p B}y_q\psi_p(y)d\sigma(y).
\end{equation}
To proceed with the computation of $P$ we will need the following Lemma \cite{book2}. 
\begin{lem} \label{lem-Green periodic max}
There exists a smooth function $R(x)$ in the unit cell $Y$ such that
\beas
G_{\sharp}(x,y) = - \f{1}{4\pi|x-y|}+R(x-y).
\eeas
Moreover, the Taylor expansion of $R(x)$ at $0$ is given by
\beas
R(x) = R(0)-\f{1}{6}(x_1^2+x_2^2+x_3^2) + O(|x|^4).
\eeas
\end{lem}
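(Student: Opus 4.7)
The plan is to exploit the PDE characterization of $G_{\sharp}$ rather than to manipulate the Fourier series directly. Differentiating the series termwise in the distributional sense and applying Poisson summation, one verifies that
$$\Delta_x G_{\sharp}(x,y) = \sum_{n \in \mathbb{Z}^3} \delta(x-y-n) - 1$$
on $\mathbb{R}^3$, together with $\mathbb{Z}^3$-periodicity and zero mean over the unit cell. Setting $R(x-y) := G_{\sharp}(x,y) + \frac{1}{4\pi|x-y|}$ and recalling that $\Delta_x\!\left(-\frac{1}{4\pi|x-y|}\right) = \delta(x-y)$, we obtain
$$\Delta R(z) = -1 + \sum_{n \in \mathbb{Z}^3 \setminus \{0\}} \delta(z-n).$$
Since the only lattice point inside the open unit cell $Y$ is the origin and the remaining Dirac masses are supported on the complement of $Y$, we conclude that $\Delta R = -1$ in $Y$. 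Standard elliptic regularity then yields $R \in C^{\infty}(Y)$, which proves the first assertion.

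For the Taylor expansion at the origin, I would invoke the symmetries of the cubic lattice. The set $\mathbb{Z}^3$ is invariant under $x \mapsto -x$, under each coordinate reflection $x_i \mapsto -x_i$, and under permutations of the coordinates, and all of these operations preserve both the defining PDE for $G_{\sharp}$ and the periodicity/zero-mean normalization. Uniqueness of the periodic solution with vanishing mean then forces $R$ to be invariant under the full symmetry group of the cube. In particular $R$ is even, so all odd-order Taylor coefficients at $0$ vanish, and the expansion reduces to
$$R(x) = R(0) + \tfrac{1}{2}\, x^{T} H x + O(|x|^4),$$
where $H$ denotes the Hessian of $R$ at the origin.

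It remains to identify $H$. The invariance of $R$ under coordinate permutations and sign flips forces $H$ to commute with every signed permutation matrix; by Schur's lemma (applied to the irreducible action of this group on $\mathbb{R}^3$), $H$ must be a scalar multiple of the identity, $H = c\, I$. Taking the trace and using $\Delta R(0) = -1$ gives $3c = -1$, hence $c = -1/3$, so the quadratic term is $-\tfrac{1}{6}(x_1^2 + x_2^2 + x_3^2)$ as claimed.

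The only real technical subtlety lies in justifying the termwise differentiation of the series for $G_{\sharp}$, which does not converge absolutely; this has to be done distributionally against test functions, with Poisson summation providing the key identity. Once the PDE $\Delta R = -1$ on $Y$ and the invariance of $R$ under the cubic symmetry group are in hand, the Taylor expansion is a short computation in elementary linear algebra.
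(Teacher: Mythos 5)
The paper does not prove this lemma; it simply cites \cite{book2}, so there is no internal proof to compare against. Your argument is correct and is essentially the standard one: termwise (distributional) differentiation together with Poisson summation gives $\Delta_x G_{\sharp} = \sum_{n\in\Z^3}\delta(x-y-n)-1$, whence $\Delta R = -1$ in the cell $Y$ (the nonzero lattice points lie outside $\overline{Y}$, at distance $1>\sqrt{3}/2$), and elliptic regularity yields smoothness. The invariance of the Fourier series under the signed-permutation (octahedral) group makes $R$ even with isotropic Hessian, so $R(x)=R(0)+\tfrac12 c|x|^2+O(|x|^4)$, and $\operatorname{tr}H=\Delta R(0)=-1$ fixes $c=-1/3$, giving the $-\tfrac16|x|^2$ term. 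One could equally read the symmetry directly off the series rather than invoking uniqueness of the periodic solution, but the Schur-lemma route you use is clean and standard; the proof is complete and sound.
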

Now we can prove the main result of this section, which shows the validity of the Maxwell-Garnett theory uniformly with respect to the frequency under the assumptions that 
\be \label{fassumption}
f \ll \mathrm{dist}(\lambda_\eps(\omega), \sigma(\mathcal{K}_B^*))^{3/5} \quad \mbox{and } 
(Id-\delta^3R_{\lambda_\eps(\omega)}^{-1}T_0)^{-1} = O(1),
\ee
where $R_{\lambda_\eps(\omega)}^{-1}$ and $T_0$ are to be defined and $\mathrm{dist}(\lambda_\eps(\omega),\sigma(\mathcal{K}_D^*))$ is the 
distance between $\lambda_\eps(\omega)$ and the spectrum  of $\mathcal{K}_B^*$. 
\begin{thm} \label{effective} Assume that (\ref{fassumption}) holds. Then we have
\begin{equation} \label{MGF}
\gamma^* = \eps_m\big(Id + fM (Id - \f{f}{3}M)^{-1} \big)+O\Big(\f{f^{8/3}}{\mathrm{dist}(\lambda_\eps(\omega),\sigma(\mathcal{K}_B^*))^2}\Big)
\end{equation}
uniformly in $\omega$. Here, $M = M(\lambda_\eps(\omega),B)$ is the polarization tensor  \eqref{defm max} associated with $B$ and $\lambda_\eps(\omega)$.
\end{thm}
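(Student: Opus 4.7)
The plan is to reduce the periodic integral equation for $\phi_p$ to a small perturbation of a free-space equation on $\partial B$, and then to observe that the perturbing operator acts effectively as multiplication by the matrix $M$ on the three-dimensional subspace relevant to $P$, so that an operator Neumann series collapses to a matrix geometric series and produces the Maxwell-Garnett expression. Starting from \eqref{gf} and \eqref{defP}, I would substitute $\psi_p(\tilde y)=\phi_p(\delta\tilde y)$ and use Lemma \ref{lem-Green periodic max} to split $G_\sharp(x,y)=-\tfrac{1}{4\pi|x-y|}+R(x-y)$. The normal-derivative kernel then splits into the standard Neumann-Poincar\'e kernel of $D=\delta B$, which is scale-invariant and rescales to $\mathcal{K}_B^*$ acting on $\psi_p$, plus the smooth piece $\nu(x)\cdot\nabla_x R(x-y)$.

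Next I would exploit the Taylor expansion of Lemma \ref{lem-Green periodic max}, which gives $\nabla R(z)=-\tfrac{1}{3}z+O(|z|^3)$. After rescaling $x=\delta\tilde x$, $y=\delta\tilde y$, with $d\sigma(y)=\delta^2\,d\sigma(\tilde y)$, the smooth-kernel contribution becomes $-\tfrac{\delta^3}{3}T_0[\psi](\tilde x)+O(\delta^5)$, where
\[
T_0[\psi](\tilde x)=\int_{\partial B}\nu(\tilde x)\cdot(\tilde x-\tilde y)\,\psi(\tilde y)\,d\sigma(\tilde y).
\]
Writing $R_\lambda:=\lambda_\eps(\omega)Id-\mathcal{K}_B^*$ and $f=\delta^3$, the rescaled equation for $\psi_p$ reads $\bigl(R_\lambda+\tfrac{f}{3}T_0+O(\delta^5)\bigr)[\psi_p]=\nu_p$, so that
\[
\psi_p=\bigl(Id+\tfrac{f}{3}R_\lambda^{-1}T_0\bigr)^{-1}R_\lambda^{-1}[\nu_p]+r_p,
\]
with $\|r_p\|_{H^{-1/2}(\partial B)}\lesssim \delta^5\,\mathrm{dist}(\lambda_\eps(\omega),\sigma(\mathcal{K}_B^*))^{-2}$ by the assumed uniform invertibility of $Id+\tfrac{f}{3}R_\lambda^{-1}T_0$ and the bound $\|R_\lambda^{-1}\|\lesssim 1/\mathrm{dist}(\lambda_\eps,\sigma(\mathcal{K}_B^*))$.

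The crucial algebraic step is that $R_\lambda^{-1}$ preserves the zero-mean subspace, so $\chi_p:=R_\lambda^{-1}[\nu_p]$ satisfies $\int_{\partial B}\chi_p\,d\sigma=0$, and therefore
\[
T_0[\chi_p](\tilde x)=-\nu(\tilde x)\cdot\int_{\partial B}\tilde y\,\chi_p(\tilde y)\,d\sigma(\tilde y)=-\sum_{i=1}^{3}M_{pi}\,\nu_i(\tilde x),
\]
by the very definition \eqref{defm max} of $M=M(\lambda_\eps(\omega),B)$. Hence $R_\lambda^{-1}T_0$ acts on the vector $\vec{\chi}=(\chi_1,\chi_2,\chi_3)$ as multiplication by the matrix $-M$, and iterating gives $\bigl(Id+\tfrac{f}{3}R_\lambda^{-1}T_0\bigr)^{-1}\vec{\chi}=(Id-\tfrac{f}{3}M)^{-1}\vec{\chi}$ as a finite-dimensional geometric series. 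Pairing with $y_q$ and using $M_{iq}=\int_{\partial B}y_q\chi_i\,d\sigma$ yields $P_{pq}=((Id-\tfrac{f}{3}M)^{-1}M)_{pq}+O(\delta^5/\mathrm{dist}^2)$, and then multiplying by $\eps_m f$, together with symmetry of $M$, gives \eqref{MGF} with an error of order $f\cdot\delta^5/\mathrm{dist}^2=f^{8/3}/\mathrm{dist}^2$. The hypothesis $f\ll\mathrm{dist}^{3/5}$ is precisely what makes this error uniformly smaller than the leading $O(f/\mathrm{dist})$ term in $\omega$.

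The main obstacle will be rigorously justifying the collapse of the infinite-dimensional Neumann series to the finite-dimensional matrix series: one must verify that $T_0$ maps $R_\lambda^{-1}\,\mathrm{span}\{\nu_1,\nu_2,\nu_3\}$ back into $\mathrm{span}\{\nu_1,\nu_2,\nu_3\}$ with matrix representation exactly $-M$, so that $\bigl(Id+\tfrac{f}{3}R_\lambda^{-1}T_0\bigr)^{-1}$ genuinely restricts on this invariant subspace to the $3\times 3$ matrix inverse $(Id-\tfrac{f}{3}M)^{-1}$. Once this algebraic closure is in hand, the remainder of the argument is careful bookkeeping of the Taylor remainder for $R$, the powers of $\delta$ introduced by rescaling, and the resolvent norm in terms of $\mathrm{dist}(\lambda_\eps(\omega),\sigma(\mathcal{K}_B^*))$.
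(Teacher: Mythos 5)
Your proposal is correct and follows essentially the same route as the paper: the same decomposition of $G_\sharp$ into the free-space kernel plus the smooth remainder $R$, the same Taylor expansion $\nabla R(z)=-\tfrac13 z+O(|z|^3)$ with the zero-mean simplification, the same rescaled equation and resolvent bound $\|R_{\lambda_\eps}^{-1}\|\lesssim 1/\mathrm{dist}(\lambda_\eps,\sigma(\mathcal{K}_B^*))$, and the same key algebraic fact that $T_0$ sends any zero-mean density into $\mathrm{span}\{\nu_1,\nu_2,\nu_3\}$ with coefficients given by first moments, producing the polarization tensor. The one organizational difference, and it settles the ``main obstacle'' you flag at the end, is that the paper never inverts the full operator $(Id-\delta^3R_{\lambda_\eps}^{-1}T_0+\delta^5R_{\lambda_\eps}^{-1}T_1)$ on the invariant subspace via a Neumann series; instead it multiplies equation \eqref{eq-phi_p MaxGar max} by $y_q$ and integrates, using $\int_{\partial B}y_q\,R_{\lambda_\eps}^{-1}T_0[\psi_p]\,d\sigma=\tfrac13\sum_r P_{pr}M_{rq}$ together with the a-priori bound $\psi_p=O(1/\mathrm{dist})$, to obtain $P(Id-\tfrac f3M)=M+O(\delta^5/\mathrm{dist}^2)$ directly. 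This sidesteps any convergence question for the operator Neumann series and only needs invertibility of the $3\times3$ matrix; if you prefer your resolvent formulation, the clean justification is that a bounded, invertible operator leaving a finite-dimensional subspace invariant has an inverse that also leaves it invariant and restricts there to the matrix inverse, so no geometric-series convergence hypothesis is needed.
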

\begin{proof}
In view of Lemma \ref{lem-Green periodic max} and \eqref{eq-phi_p anisotropic max}, we can write, for $x\in \p D$, 
\beas
(\lambda_\eps(\omega) Id-\mathcal{K}_D^*)[\phi_p](x)-\int_{\p D}\f{\p R(x-y)}{\p \nu(x)}\phi_p(y)d\sigma(y) = \nu_p(x),
\eeas
which yields, for $x\in \p B$, 
\beas
(\lambda_\eps(\omega) Id-\mathcal{K}_B^*)[\psi_p](x)-\delta^2\int_{\p B}\f{\p R(\delta(x-y))}{\p \nu(x)}\psi_p(y)d\sigma(y) = \nu_p(x).
\eeas
By virtue of Lemma \ref{lem-Green periodic max}, we get
\beas
\nabla R(\delta(x-y)) = -\f{\delta}{3}(x-y)+O(\delta^3)
\eeas
uniformly in $x,y\in \p B$. Since $\int_{\p B}\psi_p(y)d\sigma(y)=0$, we now have
\beas
(R_{\lambda_\eps(\omega)}-\delta^3T_0+\delta^5T_1)[\psi_p](x)=\nu_p(x),
\eeas
and so
\be \label{eq-phi_p MaxGar max}
(Id-\delta^3R_{\lambda_\eps(\omega)}^{-1}T_0+\delta^5R_{\lambda_\eps(\omega)}^{-1}T_1)[\psi_p](x)=R_{\lambda_\eps(\omega)}^{-1}[\nu_p](x),
\ee
where
\beas
R_{\lambda_\eps(\omega)}[\psi_p](x) &=& (\lambda_\eps(\omega) Id-\mathcal{K}_B^*)[\psi_p](x),\\
T_0[\psi_p](x) &=& \f{\nu(x)}{3}\cdot\int_{\p B}y\psi_p(y)d\sigma(y),\\
\|T_1\|_{\mathcal{L}(\mathcal{H}^*(\p B))} &=& O(1).
\eeas
Since $\mathcal{K}_B^*$ is a compact self-adjoint operator in $\mathcal{H}^*(\p B)$ it follows that \cite{Gil}
\be \label{eq-operator estimator MaxGar max}
\|(\lambda_\eps(\omega) Id-\mathcal{K}_B^*)^{-1}\|_{\mathcal{L}(\mathcal{H}^*(\p B))} \leq \f{c}{\mathrm{dist}(\lambda_\eps(\omega),\sigma(\mathcal{K}_B^*))} 
\ee
for a constant $c$.\\
It is clear that $T_0$ is a compact operator. From the fact that
the imaginary part of $R_{\lambda_\eps(\omega)}$ is nonzero, it follows that $Id-\delta^3R_{\lambda_\eps(\omega)}^{-1}T_0$ is invertible.\\
Under the assumption
\beas
(Id-\delta^3R_{\lambda_\eps(\omega)}^{-1}T_0)^{-1} = O(1),\\
\delta^5 \ll  \mathrm{dist}(\lambda_\eps(\omega),\sigma(\mathcal{K}_B^*)).
\eeas
and \eqref{eq-phi_p MaxGar max}, \eqref{eq-operator estimator MaxGar max} we get
\beas
\psi_p(x)&=&(Id-\delta^3R_{\lambda_\eps(\omega)}^{-1}T_0+\delta^5R_{\lambda_\eps(\omega)}^{-1}T_1)^{-1}R_{\lambda_\eps(\omega)}^{-1}[\nu_p](x)\\
&=& (Id-\delta^3R_{\lambda_\eps(\omega)}^{-1})^{-1}R_{\lambda_\eps(\omega)}^{-1}[\nu_p](x) + O\Big(\f{\delta^5}{\mathrm{dist}(\lambda_\eps(\omega),\sigma(\mathcal{K}_B^*))}\Big).
\eeas
Therefore, we obtain and estimate for $\psi_p$
\beas
\psi_p = O\Big(\f{1}{\mathrm{dist}(\lambda_\eps(\omega),\sigma(\mathcal{K}_B^*))}\Big).
\eeas
Now we multiply \eqref{eq-phi_p MaxGar max} by $y_q$ and integrate over $\p B$. We can derive from the estimate of $\psi_p$ that
\beas
P(Id-\f{f}{3}M) = M + O\Big(\f{\delta^5}{\mathrm{dist}(\lambda_\eps(\omega),\sigma(\mathcal{K}_B^*))^2}\Big),
\eeas
and therefore,
\beas
P = M(Id+\f{f}{3}M)^{-1}+O\Big(\f{\delta^5}{\mathrm{dist}(\lambda_\eps(\omega),\sigma(\mathcal{K}_B^*))^2}\Big),
\eeas
with $P$ being defined by (\ref{defP}). 
Since $f= \delta^3$ and 
$$M = O\Big(\f{\delta^3}{\mathrm{dist}(\lambda_\eps(\omega),\sigma(\mathcal{K}_B^*))}\Big),$$
it follows from (\ref{gf}) that the Maxwell-Garnett formula (\ref{MGF}) holds (uniformly in the frequency $\omega$) under the assumption (\ref{fassumption}) on the volume fraction $f$. 
\end{proof}

\begin{rmk}
As a corollary of Theorem \ref{effective}, we see that in the case when $fM=O(1)$, which is equivalent to the scale $f=O\Big(\mathrm{dist}(\lambda_\eps(\omega),\sigma(\mathcal{K}_B^*))\Big)$, the matrix $fM(Id-\f{f}{3}M)^{-1}$ may have a negative-definite symmetric real part. This implies that the effective medium is plasmonic as well as anisotropic.
\end{rmk}

\begin{rmk}
It is worth emphasizing that Theorem \ref{effective} does not only prove the validity of the Maxwell-Garnett theory but it can also be used 
together with the results in section \ref{sec-Anisitrop max} in order to derive the plasmonic resonances of the effective medium made of a dilute system of arbitrary-shaped plasmonic nanoparticles, following \eqref{eq-plasmonic anisotropic max}
\beas
\om = \argmax_{\om}\|\mathcal{Q}^{-1}_{\gamma^*}(\om)\|_{\mathcal{L}(\mathcal{H}^*(\p \Om))}.
\eeas 

\end{rmk}

\end{document}